\providecommand{\Z}{}
\providecommand{\Q}{}
\renewcommand{\Z}{\mathbb{Z}}
\renewcommand{\Q}{\mathbb{Q}}
\newcommand{\rE}{\mathrm{E}} 
\newcommand{\rV}{\mathrm{V}} 
\newcommand{\rp}{\mathrm{p}}
\renewcommand{\rm}{\mathrm{m}}
\newcommand{\rt}{\mathrm{t}}
\newcommand{\rc}{\mathrm{c}}
\DeclareRobustCommand*{\ora}{\overrightarrow}
\newcommand{\ux}{\underline{x}}
\newcommand{\uy}{\underline{y}}
\newtheorem{theo}{Theorem}
\newtheorem*{theo*}{Theorem}
\newtheorem{lemm}[theo]{Lemma}
\newtheorem{prop}[theo]{Proposition}
\newtheorem{property}[theo]{Property}
\newtheorem{coro}[theo]{Corollary}
\newtheorem{rema}[theo]{Remark}
\newtheorem{clai}[theo]{Claim}
\numberwithin{equation}{section}
\numberwithin{theo}{section}
\theoremstyle{definition}
\newtheorem{defi}[theo]{Definition}
\newcommand{\cO}{\mathcal{O}} 
\newcommand{\out}{\mathrm{deg_{out}}}
\newcommand{\ind}{\mathrm{deg_{in}}}
\newcommand{\rh}{\hslash}
\newcommand{\halfsquare}{
  \tikz[scale=0.6, baseline=-0.5ex]{
    \draw[line width=0.28mm, black] (0,0) rectangle (0.3,0.3);
    \fill[line width=0.28mm, black] (0,0) -- (0.3,0) -- (0.3,0.3) -- cycle;}}
\newcommand{\sq}{{\scaleto{\halfsquare}{4pt}}}
\newcommand{\halfsquareempty}{
  \tikz[scale=0.6, baseline=-0.5ex]{
    \draw[line width=0.28mm, black] (-0.1,-0.1) rectangle (0.2,0.2);}}
\newcommand{\sqempt}{{\scaleto{\halfsquareempty}{4pt}}}
\newcommand{\dtrum}{$\mathsf{d}$-trumpet}
\newcommand{\dcorn}{$\mathsf{d}$-cornet}
\begin{document}

\title[Blossoming bijection for bipartite maps and Ising model]{Blossoming bijection for bipartite maps:\\ a new approach via orientations\\ and applications to the Ising model} 

\author{Marie Albenque}

\author{Laurent Ménard}

\author{Nicolas Tokka}

\begin{abstract} 
We develop a new bijective framework for the enumeration of bipartite planar maps with control on the degree distribution of black and white vertices. Our approach builds on the blossoming-tree paradigm, introducing a family of orientations on bipartite maps that extends Eulerian and quasi-Eulerian orientations and connects the bijection of Bousquet-Mélou and Schaeffer to the general scheme of Albenque and Poulalhon. This enables us to generalize the Bousquet-Mélou and Schaeffer's bijection to several families of bipartite maps. 

As an application, we also derive a rational and Lagrangian parametrization with positive integer coefficients for the generating series of quartic maps equipped with an Ising model, which is key to the probabilistic study of these maps. 
\end{abstract} 

\maketitle

\tableofcontents

\section*{Introduction}

Planar maps are central objects in enumerative combinatorics and mathematical physics, where they serve as discrete models for random surfaces. The enumeration of planar maps dates back to the work of Tutte in the 1960s, who derived closed-form enumerative formulas for numerous families of maps in a series of seminal papers~\cite{Tutte62,Tutte63, Tutte64, Tutte68, Tutte95}. His most relevant work in the context of the present article is the enumeration of planar maps with control on vertex degrees when all the degrees are even~\cite{Tutte62}, further generalized to any vertex degrees by Bender and Canfield \cite{BeCa94}. 

Tutte's approach consists in encoding recursive decomposition of maps into functional equations for their generating series. The functional equations that arise are generally complicated, and one usually needs to introduce extra parameters -- the so-called catalytic variables -- in order to write them down. This approach was later extended and placed into a systematic framework, and equations with catalytic variables continue to play a central role in modern enumerative combinatorics. This is for example illustrated in the papers by Bousquet-Mélou and Jehanne~\cite{BousquetMelouJehanne} and Bernardi and Bousquet-Mélou~\cite{BernardiBousquetMelou}. We also refer to the survey by Bousquet-Mélou~\cite{BousquetMelou06}.

Independently, in the physics literature, the problem of enumerating planar maps emerged in connection with matrix models, see for example the work of 't Hooft \cite{THooft74}, and was solved by Brezin, Itzykson, Parisi and Zuber in 1978 \cite{BrezinItzyksonParisiZuber}. We refer the reader to the books by Lando and Zvonkine~\cite{LandoZvonkin} and Eynard~\cite{Eynard16}  for more information on this point of view. 

While giving an answer to the enumeration of maps, both approaches fail to explain the simplicity of the formulas obtained, which are closely related to the enumeration of trees. It turns out that this connection can be directly explained by explicit bijections between maps and some families of decorated trees. The first explicit bijection was constructed by Cori and Vauquelin~\cite{CoriVauquelin}, and was subsequently revitalized in Schaeffer’s PhD thesis~\cite{SchaefferPhd}. This revival paved the way for a broad range of bijective methods for studying and enumerating various families of maps. For example, the works of Schaeffer \cite{Scha97} and of Bouttier, Di Francesco and Guitter \cite{BouttierDiFrancescoGuitter_Census, BouttierDiFrancescoGuitter_Mobiles} which provide bijective proofs of the enumerative formulas for maps with control on vertex degrees aforementioned.

These bijections not only yield efficient enumeration formulas, but also provide structural insight, and were instrumental in the study of large random planar maps, for instance in the works of 
Chassaing and Schaeffer~\cite{ChassaingSchaeffer04}, Chassaing and  Durrhus~\cite{ChassaingDurhuus06}, Miermont~\cite{Miermont_Brownian} and Le Gall~\cite{LeGall_Uniqueness}, see also the survey~\cite{LeGallMiermont12}.

\smallskip

In this article, we continue the bijective study of planar maps, and deal with the enumeration of \emph{bipartite planar maps}, which are maps where vertices can be properly bicolored. Equivalently, bipartite maps are planar maps where all the faces have even degree. More precisely, we address the problem of enumerating bijectively bipartite planar maps, while controlling the distribution of vertex degrees of black and white vertices independently. This problem generalizes the enumeration of classical maps with control on vertex degrees. Indeed, each map can be transformed into a bipartite map, by inserting a black vertex of degree 2 in the middle of each edge. 

The enumeration of bipartite planar maps originated in the physics literature, where they appeared in connection with the so-called ``2-matrix model'' studied by Itzykson and Zuber \cite{Itzykson1980}. The motivation behind their study stems from their connection with the Ising model on planar maps, solved by Boulatov and Kazakov  via matrix integrals \cite{KazakovBoulatov87,Kazakov86}. The first bijective enumeration of bipartite maps was obtained by Bousquet-Mélou and Schaeffer in~\cite{BousquetMelouSchaeffer_Bipartite}. To this end, they built an explicit bijective correspondence between bipartite maps and a family of decorated bicolored trees. This allowed them to rederive fully rigorously the results of Kazakov~\cite{Kazakov86}. Shortly after, another bijective proof of this result, relying on a different family of decorated trees -- called mobiles -- was developed by Bouttier, Di Francesco and Guitter in~\cite{BouttierDiFrancescoGuitter_Mobiles}.

More recently, unified bijective frameworks have been developed to provide generic methods for constructing bijections between maps and tree-like families. Notable examples include the schemes of Bernardi and Fusy~\cite{BernardiFusy_Boundaries, BernardiFusy_Girth, BernardiFusy_Hypermaps, BernardiFusy_TrigQuadPent}, as well as the scheme of Albenque and Poulalhon~\cite{AlbenquePoulalhon_Generic}. Both approaches extend a construction introduced by Bernardi~\cite{Bernardi07} for maps equipped with a canonical orientation of their edges.

\smallskip

The starting point of the present work is to introduce a new family of orientations on bipartite maps that allows to interpret the Bousquet-Mélou and Schaeffer's bijection as an instance of the bijective scheme of Albenque-Poulalhon. This family of orientations generalizes the classical Eulerian and quasi-Eulerian orientations. 

To describe more precisely the main contribution of this paper, some terminology is needed (precise definitions will be given later in Section~\ref{sec2}). A \emph{blossoming tree} is a tree where vertices can carry opening and closing stems, which are half-edges oriented either from or towards their incident vertex. The \emph{charge} of a blossoming tree is defined as the difference between the number of  its closing and opening stems. The \emph{closure} of a blossoming tree is the planar maps obtained by matching opening and closing stems cyclically around the tree, see Figure~\ref{fig:intro}. In~\cite{BousquetMelouSchaeffer_Bipartite}, a family of trees with some charge constraints was defined and was shown to be in bijection with bipartite planar maps through the closure operation. In this paper, we generalize this result in two directions.

First, we relax the \emph{balancedness assumption} of~\cite{BousquetMelouSchaeffer_Bipartite}, in which only trees such that their root lies in the outer-face of their closure map are considered. We instead obtain a bijection between not necessarily balanced blossoming trees and planar bipartite maps with a marked outer face, see Theorem~\ref{theo: Bijection Bipartite maps}. The main consequence is of enumerative nature, since removing the balancedness assumption simplifies considerably the enumeration of the family of trees obtained.  

Second, we give an interpretation of the closure of trees with \emph{non-zero charge} in Theorem~\ref{theo: Bijection well-rooted trees with general charge}. In that case some stems remain unmatched, and by connecting them to an additional vertex we obtain a rooted bipartite map with an additional marked vertex and some connectivity constraints, see Figure~\ref{fig:intro}. This interpretation of trees with non-zero charge gives a combinatorial explanation of the rational parametrization for hypermaps given in \cite[Chapter 8]{Eynard16}. Recently, another combinatorial interpretation of this parametrization was given by Albenque and Bouttier~\cite{AlbenqueBouttier_slices}, using the theory of slices. This allows them to give a combinatorial proof of several enumerative formulas for hypermaps, using as fundamental building blocks some particular subfamilies of hypermaps on the cylinder, called \emph{trumpets} and \emph{cornets}. These families correspond exactly (by duality) to the families we obtain as the closure of trees, which confirms the important role of trumpets and cornets in the context of the topological recursion, see e.g. Eynard~\cite{Eynard16}.

Another unified bijective treatment for hypermaps has been developed by Bernardi and Fusy~\cite{BernardiFusy_Hypermaps}, building on their previous works for general maps~\cite{BernardiFusy_TrigQuadPent, BernardiFusy_Girth, BernardiFusy_Boundaries}. The main difference with our approach is that they develop a setting specially tailored to treat the case of hypermaps, whereas we use a scheme developed for general maps and apply it to a family of orientations specific to bipartite maps. Their bijective scheme allows them to generalize existing bijections, with a full control on girth and cycle lengths constraints in the hypermaps. In particular, they rederive Bousquet-Mélou and Schaeffer's blossoming bijection and Bouttier, Di Francesco and Guitter's mobile bijection as special cases of their construction. Interestingly, we can also recover the latter bijection by applying the original bijective framework for general maps introduced in~\cite{BernardiFusy_TrigQuadPent} to our orientations.

\smallskip

Finally, let us mention an important byproduct of our bijection for maps decorated with an Ising model. As already mentioned, it is classical that the generating series of bipartite maps and of maps decorated with an Ising model are connected through a change of variables, see e.g.~\cite{BousquetMelouSchaeffer_Bipartite} and Section~\ref{sec: Connection with bipartite maps}. In particular, this connection allowed Bousquet Mélou and Schaeffer to derive a rational parametrization for the generating series of quartic maps with an Ising model in~\cite{BousquetMelouSchaeffer_Bipartite}.

Our new bijection allows us to derive an equivalent rational and Lagrangian parametrization of this generating function that has a combinatorial interpretation, see Theorem~\ref{theo: Lagragian parametrization Ising GF}. A consequence of this combinatorial interpretation is that the parameter series has non-negative integer coefficients. Both this property and the Lagrangian form of the parametrization allow for a detailed analysis of the generating series of quartic maps with an Ising model. In turn, this allows to study probabilistic properties of these maps as was done for example in~\cite{ChenTurunen22, ChenTurunen20, IsingAMS, AlbenqueMenard, Turunen}. Tokka uses the parametrization of Theorem~\ref{theo: Lagragian parametrization Ising GF} to study random quartic maps with an Ising model in presence of an external magnetic field in \cite{T25}.

\begin{figure}[t!]
	\centering
	\includegraphics[width=0.32\linewidth ,page=1]{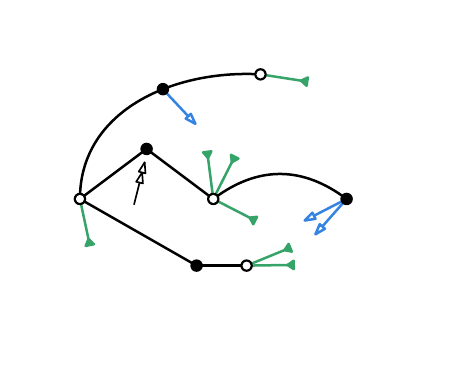}\hspace{-1.2cm}
	\includegraphics[width=0.32\linewidth ,page=2]{ClosureIntro.pdf}\quad
	\includegraphics[width=0.32\linewidth ,page=3]{ClosureIntro.pdf}
	\caption{\label{fig:intro} A blossoming bipartite tree with charge 4 (left), its complete closure with a additional black vertex of degree 4 (middle), and the  corresponding  dual hypermap with a additional marked face of degree 4 (right).}
\end{figure}

Let us also mention that our bijection for bipartite maps through orientation could be an important tool to address the problem of enumerating bipartite planar maps with prescribed vertex degrees in higher genus, building upon the work of Lepoutre~\cite{Lepoutre19}. This could pave the way for exploring the Ising model on planar maps in higher genus using a bijective approach. Note that the recent work by Bouquet-Mélou, Carrance and Louf~\cite{BousquetMelouCarranceLouf_Ising}, studies cubic maps of arbitrary genus equipped with an Ising model with different methods. They provide inequalities for the coefficients of the generating function using equations of the KP hierarchy. \\

\textbf{Outline.} Section \ref{sec2} reviews the background on planar maps, orientations and recalls the bijective scheme between maps and the so-called blossoming trees, which will be central in this work. 
Section  \ref{sec3} introduces our new family of orientations on bipartite maps and establishes the two main bijections of this paper (see Theorems~\ref{theo: Bijection Bipartite maps} and~\ref{theo: Bijection well-rooted trees with general charge}), which recover and generalize the bijection by Bousquet-Mélou and Schaeffer for bipartite planar maps.

Section \ref{sec4} explores enumerative consequences of our bijections for bipartite maps with a marked face, dual maps of the so-called trumpets and cornets, and doubly rooted bipartite planar maps.
Section~\ref{sec5} deals with maps decorated with an Ising model in general and Section~\ref{sec6} deals specifically with quartic maps equipped with an Ising model.

Appendix~\ref{AppA} explains why our orientations encode geodesic properties of the maps and justifies how they generalise Eulerian and quasi-Eulerian orientations. Appendix~\ref{AppB} applies the unified bijective scheme of Bernardi-Fusy to our family of orientation and shows that it allows us to recover the so-called mobile bijection for maps of Bouttier, Di Francesco and Guitter. \\

\textbf{Acknowledgments.} 
M.A. and N.T.’s research were supported by the ANR grant IsOMa
(ANR-21-CE48-0007).

\newpage
\section{Preliminaries and background}\label{sec2}

	\subsection{Planar maps and families of maps} 
		
		\subsubsection{Planar maps and plane maps}
A \emph{planar map} is a proper embedding of a connected planar graph on the 2-dimensional sphere $\mathbb{S}^2$, considered up to orientation-preserving homeomorphisms. 

\begin{figure}
\centering
\includegraphics[scale=0.8, page=3]{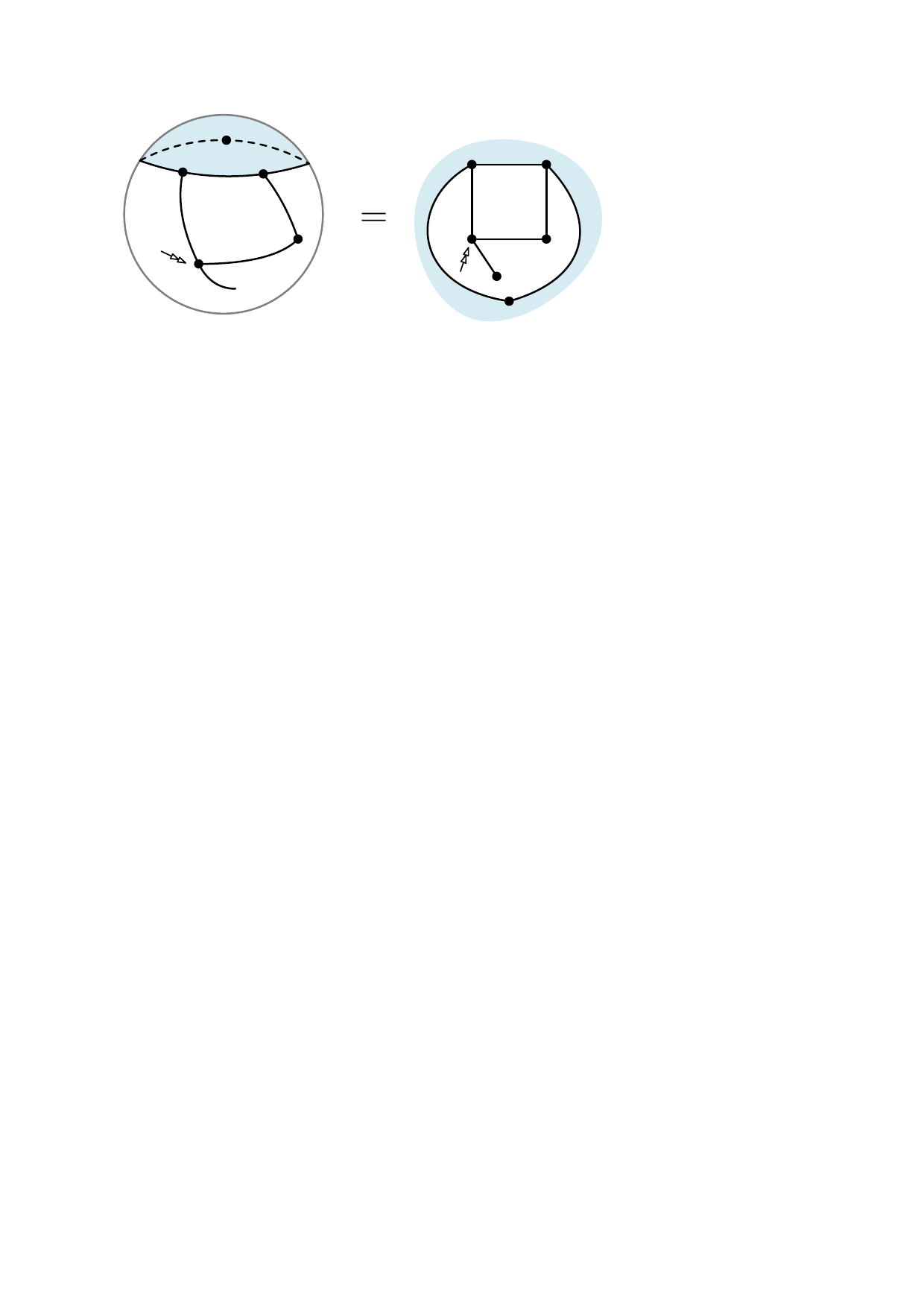}
\caption{\label{fig:planeMap} Three representations of the same plane map: as a map embedded in the sphere with a marked dashed face (left), and as a map embedded in the plane with the rooted corner in the outer face (middle), or with the marked face as the outer face (right). In the rest of this article, we will always use the last representation.}
\end{figure}

\emph{Edges} and \emph{vertices} of a map are the natural counterparts of edges and
vertices of the underlying graph. The \emph{faces} of a map $\rm$ are the
connected components of the complement of the embedded graph. The sets of its vertices, edges and faces are respectively denoted by $\mathrm{V}(\rm)$, $\mathrm{E}(\rm)$ and $\mathrm{F}(\rm)$. Note that loops and multiple edges are allowed.


Each edge $e$ of $\rm$ can be split at its middle point into two \emph{half-edges} $h_1$ and $h_2$. Thus, we often write $e=\{h_1,h_2\}$. 
The set of half-edges of $\rm$ is denoted by $\mathrm{H}(\rm)$. For any vertex $v\in \mathrm{V}(\rm)$, and any half-edge $h\in \mathrm{H}(\rm)$, we write $h\sim v$, if $h$ is incident to $v$.

For a directed edge $\overrightarrow{e}$, we write $\overrightarrow{e}=(h_1,h_2)$, where $h_1$ and $h_2$ are the two half-edges corresponding to $e$, and such that $h_1$ (resp. $h_2$) is incident to the tail of $e$ (resp. to the head of $e$). Finally, if there is no ambiguity (a.k.a. no loops nor multiple edges; for example, in the case of trees), write $\overrightarrow{e}\coloneqq\overrightarrow{uv}$ with $u,v\in \mathrm{V}(\rm)$, and set $h_{\overrightarrow{uv}}\coloneqq h_1$ and $h_{\overrightarrow{vu}}\coloneqq h_2$.

The planar embedding of a map fixes the cyclical order of half-edges around each vertex, which
defines readily a \emph{corner} as a couple of consecutive half-edges
around a vertex. The set of corners of $\rm$ is denoted by $\mathrm{C}(\rm)$.  There is a bijective correspondence between $\mathrm{H}(\rm)$ and $\mathrm{C}(\rm)$ by mapping each half-edge with the corner that follows it in counterclockwise order around its vertex. 
The \emph{degree}
of a vertex or a face is defined as the number of its incident corners. In
other words, it counts incident edges, with multiplicity 2 for each
loop (in the case of vertex degree) or for each bridge (in the case of
face degree).

\smallskip

To avoid dealing with symmetries, planar maps will always be \emph{rooted}, meaning that one of their corners -- called the \emph{root corner} -- is distinguished (and indicated by a double arrow on figures). The vertex and the face incident to this corner are called the \emph{root vertex} and the \emph{root face}, respectively. We will typically denote the root vertex of a map by $\rho$.

A \emph{pointed map} is a map with an additional marked vertex. \medskip

A \emph{plane map} is a map with an additional marked face. The term ``plane'' comes from the fact that, by taking this marked face as the \emph{outer infinite face}, a plane map admits a canonical embedding in the plane via stereographic projection, see Figure~\ref{fig:planeMap}. The marked face is called the \emph{outer face}, the corners and the vertices incident to the outer face are called the \emph{outer corners} and \emph{outer vertices}, respectively. This notion will be central in this article, since the geometric constructions we consider are often easier to describe and analyse in the plane rather than in the sphere. 

We emphasize that the outer infinite face of a plane map is not necessarily its root face, contrary to what is often done in the literature. 

\medskip

A \emph{plane tree} is a plane map with only one face. A \emph{planted tree} is a plane tree which is rooted at a dangling half-edge. In other words, a planted tree is obtained from a rooted  plane tree by inserting a half-edge at its root corner and designating this new half-edge as the root.

\subsubsection{Bipartite maps, Eulerian maps and duality}\label{sub:duality}

\begin{figure}
	\centering
	\includegraphics[width=0.32\linewidth ,page=1]{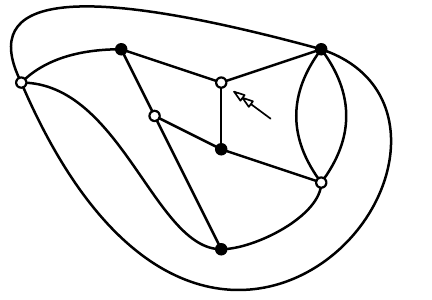}
	\includegraphics[width=0.32\linewidth ,page=2]{BipartiteEulerian.pdf}
	\includegraphics[width=0.32\linewidth ,page=3]{BipartiteEulerian.pdf}
	\caption{\label{fig:ex_Eulerian_bipartite}A bipartite map with a proper 2-coloring of its vertices (left), its dual with the correponding coloring of its faces (middle), and with the canonical direction of its edges and its directed geodesic labeling (right). The bipartite map verifies $\Delta_\circ=4$ and $\Delta_\bullet=5$.}
\end{figure}
A map $\rm$ is called \emph{bipartite} if the set of its vertices can be partitioned into two disjoint subsets, $\rV_\bullet$ and $\rV_\circ$, such that every edge connects a vertex of $\rV_\bullet$ to a vertex of $\rV_\circ$. In other words, $\rm$ admits a proper 2-coloring of its vertices in black (corresponding to $\rV_\bullet$) and white (corresponding to $\rV_\circ$). It is easy to see that a planar map is bipartite if and only if all its faces have even degree.

Observe that there are only two proper 2-colorings, which differ one from another by switching the color of all vertices. 
Throughout this paper, \emph{we will always assume that a bipartite map is endowed with one of these two colorings}.
We denote by $\Delta_\circ(\rm)$ and $\Delta_\bullet(\rm)$ the maximal degree of its white and black vertices, respectively. When the context is clear, we simply write $\Delta_\circ$ and $\Delta_\bullet$, see Figure~\ref{fig:ex_Eulerian_bipartite}. For any $e\in \rE(\rm)$, we denote respectively by $h_\circ(e)$ and $h_\bullet(e)$ the white and black endpoints of $e$.

We denote by $\mathcal{M}$ the set of rooted bipartite planar maps, and for any $d\geq 0$, we denote by $\mathcal{M}^{(d)}$ its subset composed of the maps with maximal vertex degree $d$. Similarly, we denote by $\bar{\mathcal{M}}$ the set of rooted bipartite \emph{plane} maps, and for any $d\geq 0$, we denote by $\bar{\mathcal{M}}^{(d)}$ its subset restricted to the maps with maximal vertex degree $d$ . 
\smallskip

The dual map $\rm^\dagger$ of the planar map $\rm$ is defined as follows. Vertices of $\rm^\dagger$ correspond to faces of $\rm$ and, for each edge $e$ of $\rm$, there is an edge $e^\dagger$ in $\rm^\dagger$ that links the two vertices of $\rm^\dagger$ corresponding to the faces of $\rm$ incident to $e$. See Figure~\ref{fig:ex_Eulerian_bipartite} for an illustration.

A map is called~\emph{bicolorable} if its faces can be bicolored in black and white, in such a way that every edge separates a black and a white face. It is easy to see that a planar map is bicolorable if and only if all its vertices have even degree; such a map is often referred to as a \emph{Eulerian map}, or a \emph{hypermap}. These maps are dual to bipartite maps.

Similarly to bipartite maps, an Eulerian map admits only two bicolorings of its faces, which differ one from another by flipping all the colors. Again, throughout this paper, \emph{we will always assume that an Eulerian map is endowed with one of its two proper colorings}, see Figure~\ref{fig:ex_Eulerian_bipartite}.

The edges of an Eulerian map are \emph{canonically oriented}, by requiring that white faces lie on the left of the directed edges (and, black faces on their right). Equivalently, the contour of each white face is directed in the counterclockwise direction (and, of each black face in the clockwise direction).

\subsection{Orientations, fractional orientations and \texorpdfstring{$\alpha$}{alpha}-orientations}
		
The definitions of orientations presented in this section follow \cite{Felsner04,BernardiFusy_Girth}. More general definitions were introduced in \cite{BernardiFusy_Girth,BernardiFusy_Boundaries,BernardiFusy_TrigQuadPent}, but this level of generality is not necessary to capture the combinatorics of the models studied in this paper.
				
		\subsubsection{Orientations}\label{ssub:def_orientations}
\begin{figure}
	\centering
	\includegraphics[width=0.35\linewidth ,page=1]{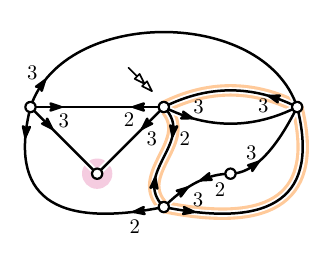}\qquad
	\includegraphics[width=0.35\linewidth ,page=2]{Orientations.pdf}
	\caption{\label{fig: Examples of orientations} A map with a non-minimal (see the orange-highlighted cycle) and non-accessible (see the pink-highlighted vertex) 3-fractional orientation (left), and a bipartite map with its minimal $\alpha_4$-orientation (right).}
\end{figure}

An \emph{orientation} of a planar map $\rm$ is a mapping $\cO$ from $\mathrm{H}(\rm)$ to $\mathbb{Z}_{\geq 0}$. The orientation is viewed and will be represented as the value of an outgoing flow through the corresponding half-edge, see Figure~\ref{fig: Examples of orientations}. 

For any oriented map $(\rm,\cO)$, a directed edge $\overrightarrow{e}=(h_1,h_2)$ of $\rm$ is said to be \emph{forward} if $\cO(h_1)>0$, and to be \emph{saturated} if ${\cO(h_2)=0}$. 
When the tail and the head of a saturated edge $\overrightarrow{e}$ are known, say $u$ and $v$, respectively, we say that \emph{$e$ is saturated from $u$ to $v$}.

Next, for $u,v\in \mathrm{V}(\rm)$, a directed path $\rp\coloneqq(\overrightarrow{e_1},\ldots,\overrightarrow{e_p})$ from $u$ to $v$ in $\rm$ is said to be \emph{forward} if for any $i\in\{0,\ldots,p-1\}$, the directed edge $\overrightarrow{e_i}$ is forward. The vertex $v$ is said to be \emph{accessible} from $u$, if there exists a forward path from $u$ to $v$. Moreover, an orientation $\cO$ is said to be \emph{root-accessible} (or simply \emph{accessible}), if its root vertex is accessible from every vertex of $\rm$, see Figure~\ref{fig: Examples of orientations}. 

In a plane map $\rm$ endowed with an orientation, a \emph{clockwise} (respectively \emph{counterclockwise}) cycle is a forward cycle such that the marked face of $\rm$ lies on its left (respectively on its right). We call \emph{minimal} any orientation without counterclockwise cycles.

		\subsubsection{Fractional orientations and \texorpdfstring{$\alpha$}{alpha}-orientations}
Fix an integer $k>0$ and $\rm$ a planar map. An orientation of $\rm$ is said to be \emph{$k$-fractional} if for every edge $e=\{h_1,h_2\}\in \mathrm{E}(\rm)$, we have $\cO(h_1)+\cO(h_2)=k$, see Figure~\ref{fig: Examples of orientations}. From now on, all the orientations considered are \emph{fractional}, meaning that they are $k$-fractional for some $k\in \mathbb{Z}_{>0}$. 

Let $\cO$ be an orientation of $\rm$. Then, for $v\in \mathrm{V}(\rm)$, the outdegree and the indegree of $v$ for $\cO$ -- respectively denoted by $\out(v)$ and $\ind(v)$ -- are defined by: 

\begin{equation*}
	\out(v)\coloneqq\sum_{h\sim v}\cO(h)\qquad \text{and} \qquad \ind(v)\coloneqq\sum_{\substack{h_1\sim v\\ \{h_1,h_2\}\in \mathrm{E}(\rm)}}\cO(h_2).
\end{equation*}
Note that if $\rm$ is endowed with a $k$-fractional orientation, we have: 
\[
\out(v)+\ind(v)=k\cdot\deg(v) \quad \text{ for any }v\in V(\rm).
\]

Fix a function $\alpha:\mathrm{V}(\rm)\rightarrow \mathbb{Z}_{\geq 0}$. An \emph{$\alpha$-orientation} $\cO$ of $\rm$ is a fractional orientation such that for every vertex  $v\in \mathrm{V}(\rm)$, we have: $\out(v)=\alpha(v)$. If $\rm$ admits an $\alpha$-orientation, then the function $\alpha$ said to be \emph{feasible} on $\rm$. Note that ``classical'' $\alpha$-orientations introduced by Felsner in~\cite{Felsner04} correspond to taking $k=1$.

The following proposition will be central in this work:
\begin{prop}\cite{Felsner04,BernardiFusy_Girth}\label{prop: Unique minimal orientation}
If $\alpha:\mathrm{V}(\rm)\rightarrow \mathbb{Z}_{\geq 0}$ is feasible on a plane map $\rm$, then there exists a unique minimal $\alpha$-orientation on $\rm$.

Moreover, when $\rm$ is rooted, if one $\alpha$-orientation of $\rm$ is accessible, then all other $\alpha$-orientations are accessible. In particular, the minimal orientation is also accessible.
\end{prop}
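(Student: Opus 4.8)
The plan is to use the standard flip-based argument. The key tool is that any two $\alpha$-orientations differ by a $\mathbb{Z}$-flow on the edges (since they have the same outdegrees), and on a planar map such a flow decomposes into a sum of cycle flows. Let me think about how to organize this.

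First, existence. Since $\alpha$ is feasible, fix some $\alpha$-orientation $\mathcal{O}_0$. If it has a counterclockwise cycle $C$, "reverse" $C$: decrease $\mathcal{O}$ by 1 along each forward half-edge of $C$ and increase it by 1 on the opposite half-edges. Since the cycle is forward, all the decremented values are $\geq 1$, so this stays a valid fractional orientation; and outdegrees are preserved since each vertex of $C$ has exactly one incoming and one outgoing edge of $C$ flipped. So the result is again an $\alpha$-orientation. This flip turns a counterclockwise cycle into a clockwise one, and — this is the crucial monotonicity — it strictly decreases the total "winding" potential. The standard potential is $\Phi(\mathcal{O}) = \sum_{f} (\text{number of forward edges having } f \text{ on their left})$ or, equivalently, one assigns to each bounded face its winding number and sums; flipping a counterclockwise cycle $C$ decreases by 1 the potential of every face enclosed by $C$, hence strictly decreases $\Phi$. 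Since $\Phi$ is a non-negative integer, the process terminates, yielding a minimal $\alpha$-orientation.

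For uniqueness, suppose $\mathcal{O}_1, \mathcal{O}_2$ are both minimal $\alpha$-orientations. Their difference $\delta(h) = \mathcal{O}_1(h) - \mathcal{O}_2(h)$ satisfies $\delta(h_1) + \delta(h_2) = 0$ on each edge (both are $k$-fractional) and $\sum_{h \sim v} \delta(h) = 0$ at each vertex (same outdegrees). Orienting each edge in the direction where $\delta > 0$ gives a non-negative circulation, which on a planar map decomposes as a non-negative combination of simple directed cycles. If $\delta \not\equiv 0$, pick such a cycle; it is forward in $\mathcal{O}_1$ (each $\mathcal{O}_1(h) \geq \mathcal{O}_2(h) + 1 \geq 1$ along it) and forward-reversed in $\mathcal{O}_2$. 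A minimal orientation has no counterclockwise cycle; so this cycle is clockwise for $\mathcal{O}_1$, hence counterclockwise for $\mathcal{O}_2$, contradicting minimality of $\mathcal{O}_2$. Therefore $\delta \equiv 0$.

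For the accessibility statement: if $\mathcal{O}$ is accessible and $\mathcal{O}'$ is another $\alpha$-orientation, write the difference as a non-negative sum of simple directed cycles as above; passing from $\mathcal{O}$ to $\mathcal{O}'$ reverses these cycles one at a time, and I claim accessibility is preserved under each such single reversal. Indeed, reversing a forward cycle $C$ keeps every vertex of $C$ accessible from each other via $C$ (now traversed the other way), and any forward path to the root that used an edge of $C$ can be rerouted around $C$; more carefully, one shows that if $v$ was accessible to $\rho$ before and a forward edge on some witnessing path got killed, then that edge lay on $C$ and one can detour along the remaining part of $C$, which is still forward. Since every vertex stays accessible to $\rho$ after each flip, $\mathcal{O}'$ is accessible. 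Applying this to $\mathcal{O}' = $ the minimal orientation gives the last sentence.

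The main obstacle is the accessibility-preservation claim: one must argue carefully that reversing a single forward cycle cannot disconnect a vertex from the root, handling the case where the cycle itself lies on the only forward routes. The cleanest way is to note that the reversed edges of $C$ still form a forward cycle, so for rerouting purposes the vertex set $V(C)$ behaves like a single "super-accessible" blob, and any forward path that hit $C$ can be truncated at its first contact with $C$ and continued along $C$; one then needs that *some* vertex of $C$ still reaches $\rho$, which follows because the portion of an old witnessing path *after* leaving $C$ for the last time is untouched by the flip. I would isolate this as a short lemma. (All of this is classical — Felsner~\cite{Felsner04}, Bernardi--Fusy~\cite{BernardiFusy_Girth} — so in the final text it may suffice to cite it, but the sketch above is the argument.)
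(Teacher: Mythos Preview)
The paper does not prove this proposition: it is quoted directly from \cite{Felsner04,BernardiFusy_Girth} and used as a black box. Your sketch is the standard argument from that literature and is correct.

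One remark on the accessibility part, where you flag the rerouting lemma as the main obstacle: there is a shorter route that bypasses cycle-flipping entirely. An $\alpha$-orientation $\mathcal{O}$ fails to be accessible exactly when there is a non-empty $S\subseteq \mathrm{V}(\rm)\setminus\{\rho\}$ with every cut half-edge on the $S$ side carrying value $0$; summing outdegrees over $S$ then forces $\sum_{v\in S}\alpha(v)=k\,|\mathrm{E}_S|$, where $\mathrm{E}_S$ is the set of edges internal to $S$. Conversely, $\sum_{v\in S}\alpha(v)\geq k\,|\mathrm{E}_S|$ holds for any $\alpha$-orientation, with equality forcing all outgoing cut flow to vanish. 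Hence accessibility is equivalent to the strict inequality $\sum_{v\in S}\alpha(v)>k\,|\mathrm{E}_S|$ for all such $S$, a condition that depends only on $\alpha$ and not on the particular orientation. This is precisely the criterion the paper itself invokes later (see the accessibility check at the end of the proof of Lemma~\ref{lemm: Characterization tightness}), so aligning with it would also streamline the exposition.
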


	\subsection{Blossoming maps and bijective scheme}
		\subsubsection{Blossoming maps and blossoming trees}

A \emph{blossoming map} is a plane map in which each \emph{outer corner} can carry some half-edges. These half-edges are called \emph{closing stems} (for ingoing half-edges) and \emph{opening stems} (for outgoing half-edges), also commonly known in the literature as buds and leaves (see \cite{BousquetMelouSchaeffer_Bipartite,BouttierDiFrancescoGuitter_Census}). A \emph{blossoming tree} is a blossoming map with only one face. A \emph{planted blossoming tree} is a blossoming tree rooted at a dangling half-edge, which is neither an opening nor a closing stem.

An orientation $\cO$ of a blossoming map is defined as an orientation for maps (see Section~\ref{ssub:def_orientations}), subject to the additional condition that $\cO(h)=0$, for any closing stem $h$. Moreover, if $\cO$ is $k$-fractional, we also require that $\cO(h)=k$, for any opening stem $h$.

\medskip

The \emph{charge} of a blossoming map is defined as the difference between the number of its closing stems and the number of its opening stems. Furthermore, for a blossoming tree $\rt$, we define the \emph{charge} of any $v\in \rV(\rt)$ -- denoted by $\rc_\rt(v)$, or $\rc(v)$ if the context is clear -- as the charge of the subtree of $\rt$ rooted at $v$. 
		
\begin{figure}[t!]
	\centering
	\includegraphics[width=1\linewidth ,page=2]{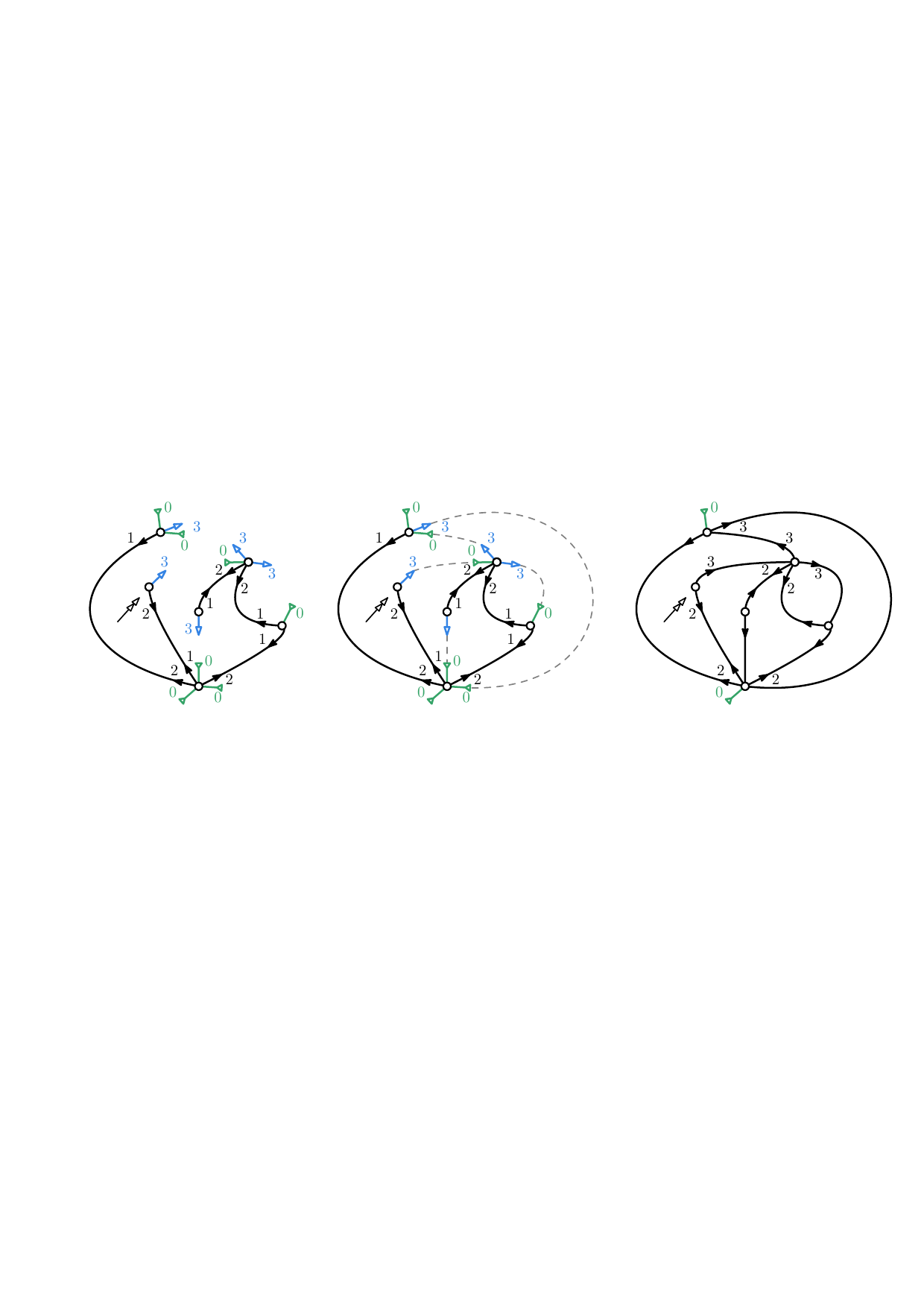}
	\caption{\label{fig: Closure blossoming tree} A blossoming tree of charge $2$ with a $3$-fractional orientation (left). The same tree with the matching of stems in dashed lines (middle). Its closure, which is a blossoming map with two unmatched closing stems (right).}
\end{figure}		
		
		\subsubsection{Closure of blossoming maps}
Following~\cite{Scha97,BouttierDiFrancescoGuitter_Census,PouSch06,Bernardi07,AlbenquePoulalhon_Generic}, given a blossoming map $\rm$, we define its \emph{closure} as follows. Consider the cyclic sequence of opening and closing stems, obtained by turning clockwise around the border of its marked face. This induces a partial matching between opening and closing stems, as in a parenthesis word. Note, that some opening or closing stems remain unmatched if the charge of the whole tree is negative or positive, respectively. For any matched pair made of an opening and a closing stem, merge them together so as to create a new edge, in such a way that the outer infinite face lies on its left (when oriented from the opening stem to the closing stem), see Figure~\ref{fig: Closure blossoming tree}. It is easy to prove that the resulting plane blossoming map does not depend on the order on which each local closure is performed. Also note that the vertex degree distribution is preserved through the closure operation. 

The closure of a blossoming map with charge 0 is a plane map (i.e with no remaining unmatched stems). In contrast, the closure of a blossoming map with charge $k>0$ (resp. $k<0$) yields a blossoming map with $k$ unmatched closing (resp. opening) stems.

If a blossoming map $\rm$ is equipped with an orientation $\cO$, then its closure naturally inherits this orientation -- still denoted by $\cO$, by a slight abuse of notation. Since $\cO(h)=0$ for any closing stem, the edges created during the closure are all saturated.

Note that both accessibility and minimality are preserved under the closure operation. Accessibility follows immediately, while minimality holds because the closure process is performed in the clockwise direction and so does not create any counterclockwise cycles.

		\subsubsection{Bijective scheme}
		The inverse of the closure operation is \emph{a priori} not well-defined. Indeed, a map can be obtained as the closure of any of its spanning trees. 
The following result by Albenque and Poulalhon, which generalizes earlier results of Poulalhon and Schaeffer~\cite{PouSch06} and Bernardi~\cite{Bernardi07}, gives a framework in which a canonical inverse can be defined:

\begin{theo}[\cite{AlbenquePoulalhon_Generic}, Corollary 2.4]\label{theo: AlbenquePoulalhonTheorem - Blossoming bijection for maps with an minimal accessible orientation}
Let $\rm$ be a rooted plane map with a minimal accessible orientation $\cO$. Then, there exists a unique blossoming tree $\rt_\rm$ of charge $0$, equipped with an accessible orientation, whose closure yields $(\rm,\cO)$.

Moreover, the statement holds as well if $\rm$ is a blossoming map with $k$ unmatched closing stems (respectively, opening stems). In that case, $\rt_\rm$ is a blossoming tree of charge $k$ (respectively, $-k$).
\end{theo}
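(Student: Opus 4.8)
The plan is to prove Theorem~\ref{theo: AlbenquePoulalhonTheorem - Blossoming bijection for maps with an minimal accessible orientation} by constructing an explicit ``opening'' operation inverse to the closure, and then showing it lands in the right set and is a two-sided inverse. Since the statement is attributed verbatim to Albenque--Poulalhon, strictly speaking one could simply cite \cite{AlbenquePoulalhon_Generic}; but to keep the paper self-contained I would sketch the argument as follows. The core idea is that, given a rooted plane map $\rm$ with a minimal accessible orientation $\cO$, one wants to select a canonical spanning tree and then ``cut'' all non-tree edges into an opening stem and a closing stem. The canonical choice should be dictated by the orientation: because $\cO$ is accessible, every vertex has a forward path to the root $\rho$; one runs a leftmost (or rightmost, depending on convention) depth-first exploration from the root corner that only follows forward edges, and the tree $\rt_\rm$ is the set of edges traversed. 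Each edge $e\notin \rt_\rm$, when first encountered during the exploration, is replaced by a closing stem on the far side and an opening stem on the near side, placed in the corner where the exploration met it, with orientation inherited from $\cO$ (so $\cO(h)=0$ on the closing stem, and $=k$ on the opening stem if $\cO$ is $k$-fractional, which is consistent because every non-tree edge is saturated in a minimal accessible orientation — this is where minimality is used).

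The key steps, in order, would be: (i) define the opening map precisely via the forward leftmost DFS, and check it produces a tree spanning all vertices (using accessibility) with the stems inheriting a well-defined orientation; (ii) verify that the resulting blossoming tree has charge $0$ in the charge-$0$ case — this amounts to an Euler-characteristic count: the number of non-tree edges equals $|\rE(\rm)|-|\rV(\rm)|+1$, and each contributes one opening and one closing stem, so charge is $0$, while in the variant with $k$ unmatched closing stems one gets an extra $k$ closing stems hence charge $k$; (iii) show that applying the closure to $\rt_\rm$ recovers $(\rm,\cO)$, i.e. that the cyclic parenthesis matching of stems around the tree reconstructs exactly the non-tree edges in their original embedding — this is the crucial geometric lemma and uses that the exploration is clockwise/leftmost so that a non-tree edge's two stems are ``nested'' correctly; (iv) show uniqueness: any blossoming tree of charge $0$ with accessible orientation whose closure is $(\rm,\cO)$ must equal $\rt_\rm$, by arguing that the closure operation, being performed clockwise and preserving minimality, forces the tree edges to be precisely those not created during closure, and that these coincide with the forward-DFS tree.

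The main obstacle I expect is step (iii)--(iv): proving that the opening and closure operations are genuinely inverse to each other, and in particular that the forward-leftmost-DFS tree is the \emph{unique} spanning tree compatible with the closure. The subtlety is that a priori many spanning trees close up to the same map; what pins down the canonical one is the interaction between the orientation (minimality + accessibility) and the planar/cyclic structure governing the parenthesis matching. One typically shows that in the closure of \emph{any} charge-$0$ accessibly-oriented blossoming tree, running the forward leftmost DFS from the root re-selects exactly the original tree edges (never a created edge), because created edges are saturated and ``point backward'' relative to the exploration order; combined with minimality this rules out the exploration ever using a closure edge before exhausting tree edges. Making this precise requires a careful induction on the closure steps or on the DFS, tracking how the clockwise matching of stems sits relative to the contour of the tree. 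I would isolate this as a lemma (``the forward leftmost DFS of a closure returns the tree'') and treat the rest as bookkeeping. The charge bookkeeping in step (ii) and the orientation-inheritance checks in step (i) are routine; the genuinely delicate point is the compatibility lemma, which is exactly the content of \cite[Corollary~2.4]{AlbenquePoulalhon_Generic} and which I would either reproduce in outline or invoke directly.
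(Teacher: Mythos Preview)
The paper does not prove this theorem at all: it is stated as a quotation of \cite[Corollary~2.4]{AlbenquePoulalhon_Generic} and used as a black box throughout. You correctly identify this and explicitly frame your write-up as an optional self-contained sketch rather than a reconstruction of something the paper does. So there is nothing to compare on the paper's side.

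As for the sketch itself, it is essentially the Bernardi/Albenque--Poulalhon argument and the outline is sound: build a canonical spanning tree by a clockwise (leftmost) exploration from the root corner, cut the remaining edges into stem pairs, and argue that closure reverses this. Two small imprecisions are worth flagging. First, the sentence ``every non-tree edge is saturated in a minimal accessible orientation --- this is where minimality is used'' reads as a general fact about any spanning tree, which it is not; it is a property of \emph{this particular} exploration tree, and establishing it is exactly the content of the compatibility lemma you later isolate in step~(iii)--(iv). Second, the exploration in \cite{AlbenquePoulalhon_Generic} is a tour of corners around the map (in the spirit of Bernardi's construction) rather than a vertex-based DFS, and the rule for traversing versus cutting an edge is phrased in terms of which side of the edge is first visited relative to the saturation direction; your ``forward DFS'' description is morally equivalent but would need to be made precise in the fractional setting, where ``forward'' alone does not decide whether an edge belongs to the tree. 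The Euler-characteristic count for the charge and the extension to $k$ unmatched stems are correct as stated.
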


Let us emphasize two crucial points about this correspondence. Consider a rooted plane map $\rm$ and its associated blossoming tree $\rt_\rm$. First, the edges of $\rm$ that are not in $\rt_\rm$ are necessarily saturated, since only edges of this type are created during the closure procedure. 
Second, $\rt_\rm$ and $\rm$ share the same vertex degree distribution, as the closure procedure preserves vertex degrees.

\section{New family of orientations and blossoming bijections for bipartite maps}\label{sec3}
In this section, we define a new family of feasible $\alpha$-orientations on bipartite planar maps. Then, we apply the bijective scheme from Theorem~\ref{theo: AlbenquePoulalhonTheorem - Blossoming bijection for maps with an minimal accessible orientation} to both recover and generalize the bijection of bipartite maps defined by Bousquet-Mélou and Schaeffer in~\cite{BousquetMelouSchaeffer_Bipartite}.

	\subsection{Definitions and first properties of \texorpdfstring{$\alpha_d$}{alpha_d}-orientations} 
In all this section, $\rm\in\mathcal{M}$ is a rooted bipartite  planar map (endowed with one of its two proper colorings). For $d\geq 1$, let $\alpha_d : \rV(\rm)\rightarrow \mathbb{Z}_{\geq 0}$  be the function defined by: 
\begin{equation}\label{eq: Condition to be an alpha d orientation}
    \alpha_d(v) \coloneqq
    \begin{cases*}
      d\,\deg(v) & if $v$ is black, \\
      \deg(v)  & if $v$ is white.
    \end{cases*}
\end{equation}
The function $\alpha_{d}$ is feasible on $\rm$, as a $(d+1)$-fractional orientation. Indeed, it suffices to set:
\begin{equation*}
   \tilde\cO(h)\coloneqq 
   \begin{cases}
     d & \text{ for any }h\text{ incident to a black vertex},\\
     1 & \text{ for any }h\text{ incident to a white vertex}.
   \end{cases} 
 \end{equation*} 
In the orientation $\tilde\cO$, each edge is forward in both directions, so that $\tilde\cO$ is clearly  accessible. Thus, as a direct consequence of Proposition~\ref{prop: Unique minimal orientation}, we have:

\begin{figure}[t]
	\centering
	\includegraphics[width=0.25\linewidth
	,page=1]{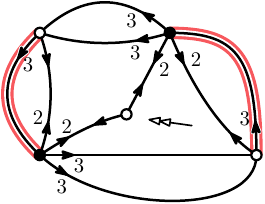}\qquad\qquad
	\includegraphics[width=0.25\linewidth
	,page=2]{AlphaOrientation.pdf}\qquad\qquad
	\includegraphics[width=0.25\linewidth
	,page=3]{AlphaOrientation.pdf}
	\caption{\label{fig: Examples of alpha orientations} A rooted bipartite plane map with $\Delta_\circ=4$, equipped with its minimal $\alpha_d$-orientation, shown from left to right for $d=2$, $d=4$, and $d=6$.\\
	Observe that for $d=2$, two edges (highlighted in red) are saturated from their white endpoints to their black endpoints, whereas for $d\geq 4$, all saturated edges are oriented from their black endpoints to their white endpoints. We can observe the stability property from Property~\ref{claim: Unique minimal accessible alpha d orientation for a bipartite map} between $d=4$ and $d=6$.}
\end{figure}

\begin{property}\label{claim: Unique minimal accessible alpha d orientation for a bipartite map}
For any $d\geq 1$, every bipartite plane map $\rm$ admits a unique minimal accessible $\alpha_{d}$-orientation.
\end{property}

\begin{rema}
Note that every bipartite planar map $\rm$ endowed with an $\alpha_{d}$-orientation is in fact \emph{strongly connected}, meaning that every vertex is accessible from every other vertex.
\end{rema}

We now state key properties of $\alpha_d$-orientations. 
\begin{property}\label{claim: Saturated edges are from black to white endpoints}
Fix $d\geq \Delta_\circ(\rm)$. Then, in any $\alpha_d$-orientation $\cO$, every saturated edge is oriented from its black endpoint to its white endpoint.
\end{property}
\begin{proof}
By definition, we have $\cO(h) \leq \Delta_\circ(\rm) < d+1$, for any half-edge $h\in \mathrm{H}(\rm)$ adjacent to a white vertex. The claim follows since $\cO$ is $(d+1)$-fractional.  
\end{proof}

Note that this result is false for general values of $d$, see Figure~\ref{fig: Examples of alpha orientations}. As a consequence of the previous claim, we have the following stability property for minimal $\alpha_d$-orientations, illustrated on Figure~\ref{fig: Examples of alpha orientations}:
\begin{property}\label{claim:stability for orientations}
Fix $d_1\geq d_2\geq\Delta_\circ(\mathrm{m})$ and write respectively $\cO_1$ and $\cO_2$ for the minimal $\alpha_{d_1}$-orientation and $\alpha_{d_2}$-orientation on $\rm$. Then:  
\begin{equation}\label{eq:defO1}
    \cO_1(h) =
    \begin{cases*}
      \cO_2(h)   & if $h$ is adjacent to a white vertex, \\
      \cO_2(h) + (d_1-d_2)  & if $h$ is adjacent to a black vertex.
    \end{cases*}
\end{equation}
\end{property}
\begin{proof}
Let $\cO_2$ be the minimal $\alpha_{d_2}$-orientation of $\rm$, and define $\cO_1$ as in \eqref{eq:defO1}. Then, $\cO_1$ is clearly an $\alpha_{d_1}$-orientation. To prove that $\cO_1$ is minimal, we only need to prove that if an edge $e=(h_\bullet,h_\circ)$ is saturated in $\cO_2$, then it remains saturated in the same direction in $\cO_1$. By Property~\ref{claim: Saturated edges are from black to white endpoints}, if $e$ is saturated, it is from $h_\bullet$ to $h_\circ$, and hence $\cO_2(h_\circ)=0$. By definition of $\cO_1$, we also have that $\cO_1(h_\circ)=0$, which concludes the proof. 
\end{proof}
An alternative proof of this property based on the geodesic properties of $\alpha_d$-orientations will be given in Appendix~\ref{AppA}, see Remark~\ref{rem:proofclaim}.

	\subsection{Recovering the BMS bijection: the case of plane maps}
		\subsubsection{Blossoming bijection and well-charged trees}
In this section, we apply Theorem~\ref{theo: AlbenquePoulalhonTheorem - Blossoming bijection for maps with an minimal accessible orientation} to bipartite maps endowed with their minimal $\alpha_d$-orientation, to obtain a new bijection between bipartite maps and the family of \emph{well-charged trees} introduced in \cite{BousquetMelouSchaeffer_Bipartite}, which we now define. 

\begin{figure}[t!]
\centering
\includegraphics[scale=0.9, page=]{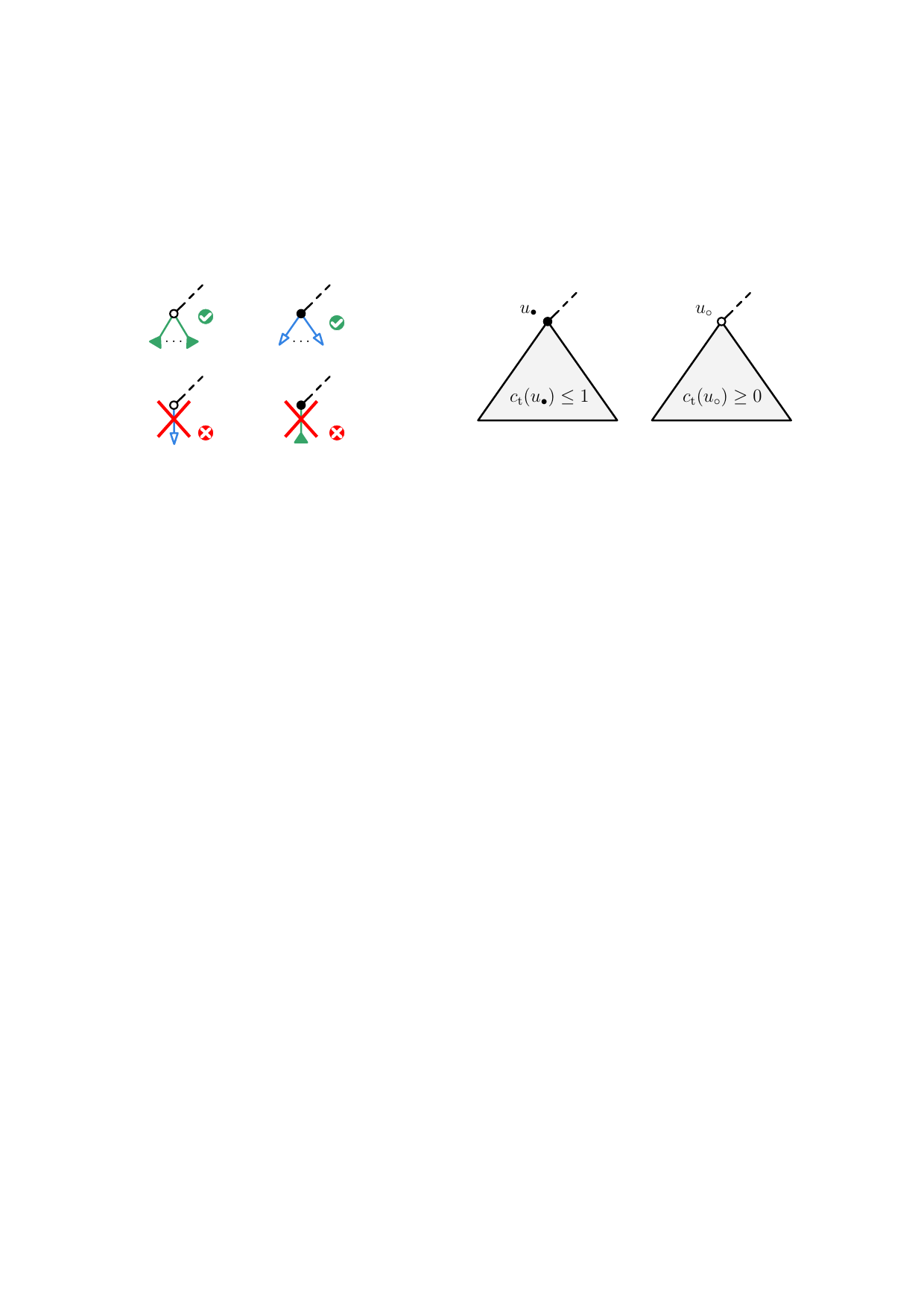}
\caption{\label{fig:well-charged trees}Constraints on well-charged trees.}
\end{figure}

Let us recall that the charge of a blossoming tree is the difference between its number of closing stems and of opening stems, then a \emph{well-charged (blossoming) tree} is a rooted bipartite blossoming tree satisfying the following conditions, illustrated in Figure~\ref{fig:well-charged trees}: 
\begin{itemize}
\item black vertices have no incident closing stems, and every non-root black vertex has charge at most one,
\item white vertices have no incident opening stems, and every non-root white vertex has non-negative charge.
\end{itemize}

A well-charged tree is said to be \emph{black} or \emph{white}, depending on the color of its root. For any $k\in\mathbb{Z}$ and $d\geq 0$, we denote by $\mathcal{T}_k$ the set of well-charged trees of total charge $k$, and by $\mathcal{T}_k^{(d)}$ the subset consisting of trees whose maximal vertex degree is $d$. 


Recall the definition of $\bar{\mathcal{M}}$ in Section~\ref{sub:duality}, the main result of this section is the following bijection, illustrated in Figure~\ref{fig: closure well-charged trees}:

\begin{theo}\label{theo: Bijection Bipartite maps}
The closure operation is a one-to-one correspondence between $\mathcal{T}_0$ and $\bar{\mathcal{M}}$.
Moreover, this correspondence preserves the vertex-degree distribution and the color of the root vertex.
\end{theo}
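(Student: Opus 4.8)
The plan is to exhibit the closure map and its inverse using the two structural results already in hand. First I would show that the closure of a well-charged tree $\rt\in\mathcal T_0$ is indeed an element of $\bar{\mathcal M}$. Since $\rt$ has total charge $0$, the closure produces a genuine plane map $\rm$ with no unmatched stems, and since the closure preserves vertex degrees, $\rm$ is bipartite with a proper $2$-coloring inherited from $\rt$; the root corner of $\rt$ designates the root corner of $\rm$, and the outer face of the blossoming tree becomes the marked (outer) face of the plane map. It remains to equip $\rt$ with an orientation and check the hypotheses of Theorem~\ref{theo: AlbenquePoulalhonTheorem - Blossoming bijection for maps with an minimal accessible orientation}. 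Here I would take $d=\Delta_\circ(\rm)$ (equivalently any $d$ at least the maximal white degree of $\rt$) and define the $(d+1)$-fractional orientation on $\rt$ by setting $\cO(h)=d$ on half-edges at black vertices and $\cO(h)=1$ on half-edges at white vertices (with $\cO=0$ on closing stems, $\cO=d+1\cdot$... wait, on opening stems $\cO(h)=d+1$): I must check this is a valid orientation of the blossoming tree, i.e. that black vertices carry no closing stems (true by the well-charged condition) and white vertices carry no opening stems (also true), so that the outdegree of each internal vertex equals $\alpha_d$ and the charge condition on subtrees is exactly what guarantees the local closure at each vertex is consistent — more precisely, the charge of the subtree at $v$ controls the flow balance across the edge joining $v$ to its parent, and the well-charged inequalities (charge $\le 1$ below black vertices, charge $\ge 0$ below white vertices) are precisely what forces, after closure, the orientation to be an $\alpha_d$-orientation that is minimal and accessible. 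The orientation is manifestly accessible on the tree (every edge is forward in both directions), so by the remark following the closure construction the closure inherits accessibility, and minimality is preserved because closure is performed clockwise; hence $(\rm,\cO)$ carries the minimal accessible $\alpha_d$-orientation of Property~\ref{claim: Unique minimal accessible alpha d orientation for a bipartite map}.

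Conversely, given $\rm\in\bar{\mathcal M}$, set $d=\Delta_\circ(\rm)$ and equip $\rm$ with its unique minimal accessible $\alpha_d$-orientation $\cO$ from Property~\ref{claim: Unique minimal accessible alpha d orientation for a bipartite map}. Theorem~\ref{theo: AlbenquePoulalhonTheorem - Blossoming bijection for maps with an minimal accessible orientation} then produces a unique blossoming tree $\rt_\rm$ of charge $0$, with an accessible orientation, whose closure is $(\rm,\cO)$; it shares the vertex-degree distribution and root of $\rm$. The crux is to verify that $\rt_\rm$ is well-charged. This is where I expect the main work to lie. The key observation is that the non-tree edges of $\rm$ are all saturated (Theorem~\ref{theo: AlbenquePoulalhonTheorem - Blossoming bijection for maps with an minimal accessible orientation}), and by Property~\ref{claim: Saturated edges are from black to white endpoints} every saturated edge is oriented from black to white — consequently the closing stems of $\rt_\rm$ (which are the "black-to-white" half of each cut non-tree edge, with flow value $0$) all sit at white vertices, and the opening stems all sit at black vertices. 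This immediately gives "black vertices have no closing stems, white vertices have no opening stems." For the charge inequalities, I would compute, for each vertex $v$ and the subtree $\rt_v$ below it in $\rt_\rm$, the difference between closing and opening stems inside $\rt_v$ by a flow-conservation argument: summing $\out-\ind=0\cdot$... no — summing the local balance $\alpha_d(w)=\out(w)$ over $w\in\rt_v$ and using that opening stems contribute $d+1$ to the outflow, closing stems $0$ to the outflow but $d+1$... actually a cleaner route: the charge of $\rt_v$ equals (a linear combination of) the flow on the single edge of $\rt_\rm$ joining $v$ to its parent, and the constraint that $\cO$ takes values in $\{0,\dots,d+1\}$ on that edge — combined with the fact that $\cO(h)\le\deg_\circ\le d$ at white half-edges and $\cO(h)\le d$... — forces charge $\le 1$ below a black $v$ and charge $\ge 0$ below a white $v$. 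So the heart of the argument is this bookkeeping identity relating subtree charge to the flow on the parent edge, together with the value restrictions coming from $(d+1)$-fractionality and $d\ge\Delta_\circ$.

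Finally, the two constructions are mutually inverse: starting from $\rt\in\mathcal T_0$, building $(\rm,\cO)$ by closure and then applying Theorem~\ref{theo: AlbenquePoulalhonTheorem - Blossoming bijection for maps with an minimal accessible orientation} returns $\rt$ by the uniqueness clause of that theorem (one must only check that the orientation we put on $\rt$ is the accessible one singled out there, which holds since all such trees of charge $0$ with accessible orientation and prescribed closure are unique); starting from $\rm\in\bar{\mathcal M}$ and closing $\rt_\rm$ returns $(\rm,\cO)$ by construction, and forgetting the (canonically recoverable, since it is the minimal accessible $\alpha_d$-orientation) orientation returns $\rm$. Preservation of vertex-degree distribution and root color is inherited at every step from the closure operation and from Theorem~\ref{theo: AlbenquePoulalhonTheorem - Blossoming bijection for maps with an minimal accessible orientation}. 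The one subtlety worth flagging is the choice $d=\Delta_\circ$: one should note the well-charged conditions do not depend on $d$ (by the stability Property~\ref{claim:stability for orientations}, increasing $d$ beyond $\Delta_\circ$ changes only the flow values at black half-edges and leaves the positions of the stems, hence the tree structure and all charges, unchanged), so the bijection is canonically defined.
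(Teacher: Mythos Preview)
Your overall strategy is the paper's: factor the bijection through $\alpha_d$-oriented blossoming trees and invoke the generic scheme of Theorem~\ref{theo: AlbenquePoulalhonTheorem - Blossoming bijection for maps with an minimal accessible orientation}. The reverse direction (from $\rm$ to $\rt_\rm$) and the stability remark at the end are essentially correct; your flow-balance sketch for the charge inequalities is the content of the paper's Lemma~\ref{lemm: Charge-Excess correspondence}.

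There is, however, a genuine gap in the forward direction. The orientation you define on the well-charged tree --- $\cO(h)=d$ on non-stem half-edges at black vertices, $\cO(h)=1$ at white vertices, together with $d+1$ on opening stems and $0$ on closing stems --- is \emph{not} an $\alpha_d$-orientation as soon as any vertex carries a stem. For a black vertex $v$ with $k$ tree-edges and $n_{op}$ opening stems one gets $\out(v)=dk+(d+1)n_{op}=d\deg(v)+n_{op}$, overshooting $\alpha_d(v)=d\deg(v)$ whenever $n_{op}>0$; the symmetric defect ($\out(v)=\deg(v)-n_{cl}$) occurs at white vertices with closing stems. On a tree there is at most one orientation with prescribed outdegrees, and it must be built recursively from the leaves: the flow on the edge joining $v$ to its parent is forced by $\alpha_d(v)$ and the flows coming up from the children, which in turn are governed by the subtree charges. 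The paper carries this out explicitly (Lemma~\ref{lemm: Charge-Excess correspondence} and Proposition~\ref{prop:bij_charge0}) and shows that the well-charged constraints are \emph{precisely} what makes this recursion produce nonnegative values and an accessible orientation --- for instance, a black subtree of charge $c$ gets excess $d+c$ on its planting edge, which lies in $\{1,\dots,d+1\}$ exactly when $c\le 1$. Your later remark that ``the charge of the subtree at $v$ controls the flow balance across the edge joining $v$ to its parent'' is the right idea, but it is inconsistent with the uniform values you wrote down; the tree-edge flows genuinely vary with the subtree charges, and this is where the well-charged inequalities enter on the forward side, not merely as a check that a trivial orientation is valid.
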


\begin{rema}
This result is closely related to the bijection of Bousquet-Mélou and Schaeffer~\cite{BousquetMelouSchaeffer_Bipartite} for rooted bipartite planar maps. Before giving the proof, we take a moment to highlight the key differences between the two constructions.

A way to specialize Theorem~\ref{theo: Bijection Bipartite maps} to the case of planar maps instead of plane maps, is to restrict our  attention to the case where the marked outer face of the map  coincides with its root face. On the tree side, this corresponds to considering the \emph{balanced} well-charged trees introduced in~\cite{BousquetMelouSchaeffer_Bipartite}. These are precisely the well-charged trees whose closure has the property that the outer face coincides with the root face.

	The proof of the bijection is simpler in the case of planar maps, and is based on a recursive decomposition. But it comes at the price that the enumeration of balanced well-charged trees is much more complicated than the enumeration of well-charged trees, see~\cite[Section~5]{BousquetMelouSchaeffer_Bipartite}. In particular, the enumeration relies on the additional technical assumption that the map is rooted at a black vertex of degree 2. This assumption can then be dropped by a re-rooting procedure, at the cost of an additional integration step. In the case of maps with valences $2$ and $4$, the integral is evaluated explicitly in~\cite[Section~7.3]{BousquetMelouSchaeffer_Bipartite}, and an explicit parametrization of their weighted generating function is established.
	
	In our case, the enumeration of non-necessarily balanced well-charged trees is much simpler. To recover the enumeration of planar maps, an integration step is needed to remove the marking of a face, this is done in the case of valences $2$ and $4$ in Section~\ref{sec: Bib Quartic}.
	\end{rema}

The rest of this section is devoted to the proof Theorem~\ref{theo: Bijection Bipartite maps}, which is divided into two parts. First, we define a family of blossoming trees -- referred to as $\alpha_d$-trees, and formally defined below -- which arise from applying the general bijective framework of Theorem~\ref{theo: AlbenquePoulalhonTheorem - Blossoming bijection for maps with an minimal accessible orientation} to the family of bipartite maps equipped with their canonical $\alpha_d$-orientations. In the second part, we prove that $\alpha_d$-trees are in bijection with well-charged trees. This is summarized in the following diagram:\smallskip

\begin{center}
    \begin{tikzcd}
\overline{\mathcal{M}} \arrow[rrrr, leftrightarrow, "\text{Theorem}~\ref{theo: Bijection Bipartite maps}"] \arrow[d,equal] 
&  &
&  & \mathcal{T}_0 \arrow[d, equal] \\
\bigcup\limits_{d\geq 0}\overline{\mathcal{M}}^{(d)} \arrow[rr, "\text{Prop.}~\ref{prop:Bij_Bipartite_Alpha}", leftrightarrow]    
&  &  \bigcup\limits_{d\geq 0}{\tiny \left\{\begin{array}{c} \alpha_d\text{-trees}\\ 
\textrm{with charge }0 \end{array}\right\}} \arrow[rr,"\text{Prop.}~\ref{prop:bij_charge0}", leftrightarrow] 
&  & \bigcup\limits_{d\geq 0}\mathcal{T}_0^{(d)} 
    \end{tikzcd}
\end{center}

		\subsubsection{Definition and closure of $\alpha_d$-trees }\label{sub: alpha_d-trees}

We start with the definition of $\alpha_d$-trees.

\begin{defi}
Fix $d\geq 1$. An \emph{$\alpha_d$-tree} is a blossoming tree $\rt$ such that: 
\begin{itemize}
    \item $\rt$ is \emph{bipartite},
    \item $\rt$ has degree at most $d$, i.e. $\Delta(\rt)\leq d$,
    \item $\rt$ is equipped with an accessible $\alpha_d$-orientation.
\end{itemize}
Depending on the color of its root vertex, an $\alpha_d$-tree is said to be \emph{black} or \emph{white}.
\end{defi}

Note that if a tree admits an $\alpha_d$-orientation, it is unique and can be constructed recursively, starting from the leaves. Moreover, the existence of such an orientation imposes some constraints on the shape of the tree. In particular, we have: 

\begin{lemm}\label{lemm: Opening and Closing stems constraint on alpha_d-trees}
Fix $d\geq 1$. In an $\alpha_d$-tree, all opening stems are incident to black vertices, and all closing stems to white vertices. 

Consequently, the closure of any $\alpha_d$-tree is a bipartite map. 
\end{lemm}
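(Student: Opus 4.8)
The statement to establish is Lemma~\ref{lemm: Opening and Closing stems constraint on alpha_d-trees}: in an $\alpha_d$-tree, every opening stem is incident to a black vertex and every closing stem to a white vertex, and consequently the closure is bipartite. The plan is to argue locally at a single vertex using the constraints imposed by a $(d+1)$-fractional $\alpha_d$-orientation together with the sign conventions for stems in a blossoming map.

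First I would recall the relevant conventions from Section~\ref{sec2}: in a $(d+1)$-fractional orientation of a blossoming map, every opening stem $h$ satisfies $\cO(h)=d+1$ and every closing stem $h$ satisfies $\cO(h)=0$; and for a genuine (non-stem) half-edge $h=\{h_1,h_2\}$ we have $\cO(h_1)+\cO(h_2)=d+1$ with each value in $\{0,1,\dots,d+1\}$. Now fix a white vertex $w$ of the $\alpha_d$-tree. Its outdegree is prescribed: $\out(w)=\alpha_d(w)=\deg(w)$. Since $w$ has $\deg(w)$ incident corners, hence $\deg(w)$ incident half-edges (stems included), and each half-edge contributes a nonnegative integer to $\out(w)=\sum_{h\sim w}\cO(h)$, the only way the sum equals $\deg(w)$ is if every half-edge incident to $w$ carries flow exactly $1$ — in particular none carries flow $0$ and none carries flow $d+1$ (here I use $d\geq 1$, so $d+1\geq 2>1$). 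An opening stem would need flow $d+1\neq 1$, so $w$ carries no opening stem. This shows all opening stems sit at black vertices. Symmetrically, for a black vertex $b$ we have $\out(b)=\alpha_d(b)=d\deg(b)$, so $\sum_{h\sim b}\cO(h)=d\deg(b)$ over $\deg(b)$ half-edges each of flow at most $d+1$; and $\ind(b)=\sum (d+1-\cO(h))=(d+1)\deg(b)-d\deg(b)=\deg(b)$, so each incident half-edge of $b$ has ``incoming'' value exactly $1$, i.e.\ flow exactly $d$. A closing stem would need flow $0\neq d$ (again using $d\geq 1$), so $b$ carries no closing stem, proving all closing stems sit at white vertices.

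For the consequence, I would invoke that the closure operation only creates edges by merging a matched opening stem with a matched closing stem. By the first part, such an opening stem is incident to a black vertex and such a closing stem to a white vertex, so every edge created during the closure joins a black vertex to a white vertex. Since the tree $\rt$ is itself bipartite with the same bicoloring of vertices, and the closure adds no new vertices and changes no colors (the closure preserves the vertex set and, by construction, acts only on stems), the resulting map has all its edges between $\rV_\bullet$ and $\rV_\circ$; hence the closure is bipartite.

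I do not expect a genuine obstacle here: the argument is a direct degree count at a single vertex, exploiting that $\alpha_d$ is calibrated precisely so that a white vertex is ``tight'' on the outgoing side (forcing all its half-edge flows to be $1$) and a black vertex is ``tight'' on the incoming side (forcing all its half-edge flows to be $d$). The only point requiring a little care is the bookkeeping that stems are counted among the $\deg(v)$ incident half-edges with the stated flow values $0$ or $d+1$, which is exactly the extra condition built into the definition of an orientation of a blossoming map; and the mild hypothesis $d\geq 1$ is what makes $1$, $d$, $0$, and $d+1$ all distinct in the way needed.
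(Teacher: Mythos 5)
There is a genuine gap in your argument, and it lies in the inference you use at both colors of vertices: from $\sum_{h\sim v}\cO(h)=\deg(v)$ over $\deg(v)$ nonnegative integer terms you conclude that \emph{every} term equals $1$. A sum of $n$ nonnegative integers equal to $n$ does not force each term to be $1$, and indeed the intermediate claims you derive are false for $\alpha_d$-trees: a white vertex typically carries closing stems, whose half-edges have flow $0$ (so some other incident half-edge must then carry flow $\geq 2$); dually, a black vertex carrying an opening stem has an incident half-edge of flow $d+1$, not $d$. So ``none carries flow $0$'' at white vertices and ``each has flow exactly $d$'' at black vertices are both wrong, and the only hypothesis you actually invoke, $d\geq 1$, is not what makes the argument work. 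The missing ingredient is the degree bound $\Delta(\rt)\leq d$ built into the definition of an $\alpha_d$-tree. For a white vertex $w$ the correct one-line argument is the paper's: a single opening stem would already contribute $d+1$ to $\out(w)=\deg(w)\leq\Delta(\rt)\leq d<d+1$, a contradiction — no pointwise claim about the other half-edges is needed. For a black vertex the conclusion does not follow from your ``each flow equals $d$'' shortcut at all; one really has to count. The paper writes $d\deg(v)=(d+1)n_{op}+\sum_{w\sim v}\cO(h_{\overrightarrow{vw}})$, bounds each $\cO(h_{\overrightarrow{vw}})$ by $d$ using accessibility (distinguishing the root from non-root vertices), and deduces $(d+1)n_{cl}\leq d$, hence $n_{cl}=0$. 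A variant closer in spirit to what you attempted does exist — bound each $\cO(h_{\overrightarrow{vw}})$ by $d+1$ using only nonnegativity of the opposite half-edge, which still yields $d\,n_{cl}\leq \deg(v)-n_{cl}\leq d-n_{cl}$ — but it again hinges on $\deg(v)\leq d$, and your appeal to $\ind(b)=(d+1)\deg(b)-\out(b)$ silently extends the identity $\out+\ind=(d+1)\deg$ to vertices carrying stems, whereas the paper's definition of $\ind$ sums only over genuine edges; that bookkeeping would have to be justified.

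Your derivation of the consequence (bipartiteness of the closure) from the first assertion is correct and matches the paper's: closure edges join an opening stem at a black vertex to a closing stem at a white vertex, and the tree is already properly bicolored.
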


\begin{figure}[t!]
  	\centering
  	\includegraphics[width=0.31\linewidth ,page=1]{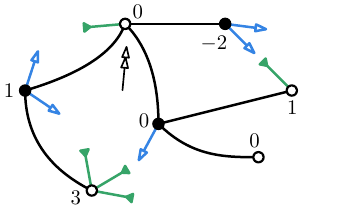}
  	\includegraphics[width=0.31\linewidth ,page=2]{CompleteWellChargedTrees.pdf}
  	\includegraphics[width=0.31\linewidth ,page=3]{CompleteWellChargedTrees.pdf}
 	\caption{\label{fig: closure well-charged trees} A well-charged tree of charge $0$ with its charge function (left), and with its $\alpha_4$-orientation (middle), recall that this means that opening stems count for $5$ and closing stems count for $0$. Its closure, which is endowed with its unique minimal accessible $\alpha_4$-orientation (right).}
\end{figure}

\begin{proof}
Let $(\rt,\cO)$ be an $\alpha_d$-tree, where $\cO$ stands for its $\alpha_d$-orientation. Fix $v\in \rV_\circ(\rt)$. By definition, we have  $\out(v)=\deg(v)\leq \Delta(\rt) \leq d$. Since $\cO(h)=d+1$ for any opening stem $h$, it implies that no opening stem can be incident to $v$. 

We now consider $v\in \rV_\bullet(\rt)$. Write respectively $k, n_{op}, n_{cl}\geq 0$ for the number of its adjacent vertices, its incident opening stems and its incident closing stems, so that $k+n_{cl}+n_{op}=\deg(v)$. By definition of an $\alpha_d$-orientation, we have: 
\begin{equation*}
d\, \left(k+n_{cl}+n_{op}\right) = \alpha_d(v) =\sum_{h\sim v}{\cO(h)} = \left(d+1\right)n_{op} + \sum_{w\sim v}{\cO(h_{\overrightarrow{vw}})}.
\end{equation*}
We give an upper bound for the last sum. Let ${w\in\rV(\rt)}$ be a child of $v$. Since $\cO$ is accessible, we have $\cO(h_{\overrightarrow{wv}})\geq 1$, so that $\cO(h_{\overrightarrow{vw}})\leq d$. Note that if $v$ is the root of $\rt$, it has $k$ children, otherwise, it has $k-1$ children. We can then bound the right-hand side of the last equation, to get: 
\begin{equation*}
d\left(k+n_{cl}+n_{op}\right) \leq \begin{cases}
\left(d+1\right)n_{op} + dk,& \text{if }v\text{ is the root vertex,}\\
\left(d+1\right)n_{op} + dk +1, &\text{otherwise.}
\end{cases}
\end{equation*}
In both cases, this yields $d n_{cl} \leq d-n_{cl}$, using the fact that ${n_{op}= \deg(v)-n_{cl}-k}$ and that $\deg(v)\leq d$. Since $k\geq 1$ if $v$ is not the root vertex, this implies that $n_{cl}=0$, and concludes the proof.
\end{proof}

The first step in proving Theorem~\ref{theo: Bijection Bipartite maps} is the following proposition, illustrated in~Figure~\ref{fig: closure well-charged trees}:

\begin{prop}\label{prop:Bij_Bipartite_Alpha}
For $d\geq 1$, the closure operation is a one-to-one correspondence between the set of $\alpha_d$-trees of charge $0$, and $\bar{\mathcal{M}}^{(d)}$.
Moreover, this correspondence preserves the vertex-degree distribution and the color of the root vertex.
\end{prop}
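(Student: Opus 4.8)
The plan is to apply Theorem~\ref{theo: AlbenquePoulalhonTheorem - Blossoming bijection for maps with an minimal accessible orientation} to the family of bipartite plane maps of maximal degree $d$, each equipped with its canonical minimal accessible $\alpha_d$-orientation (which exists and is unique by Property~\ref{claim: Unique minimal accessible alpha d orientation for a bipartite map}), and to check that the blossoming trees produced by the inverse-closure construction are exactly the $\alpha_d$-trees of charge $0$. Concretely, I would first recall that for $\rm \in \bar{\mathcal{M}}^{(d)}$ with its minimal accessible $\alpha_d$-orientation $\cO_\rm$, Theorem~\ref{theo: AlbenquePoulalhonTheorem - Blossoming bijection for maps with an minimal accessible orientation} produces a unique blossoming tree $\rt_\rm$ of charge $0$ with an accessible orientation whose closure is $(\rm,\cO_\rm)$; by the "two crucial points" emphasized after that theorem, $\rt_\rm$ has the same vertex-degree distribution as $\rm$ (hence $\Delta(\rt_\rm)\le d$), and it inherits an orientation from $\cO_\rm$ in which every non-tree edge of $\rm$ is saturated. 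The remaining work is to verify that the inherited orientation on $\rt_\rm$ is an $\alpha_d$-orientation: since the closure preserves out-degrees at every vertex (opening stems carry weight $d+1$, closing stems weight $0$, exactly matching the weights of the merged edges in the closed map), we have $\out_{\rt_\rm}(v) = \out_{\rm}(v) = \alpha_d(v)$ for every $v$, so $\rt_\rm$ is indeed an $\alpha_d$-tree. The color of the root vertex is visibly preserved since the closure does not modify the root corner.

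For injectivity and surjectivity, I would argue as follows. Injectivity is immediate: the closure operation is a well-defined map on blossoming trees, so two $\alpha_d$-trees with the same closure (as a plane map) must also induce the same orientation on that closure — because the closure of an oriented blossoming tree carries a canonical inherited orientation, and conversely the minimal accessible $\alpha_d$-orientation of the closure is unique — and then Theorem~\ref{theo: AlbenquePoulalhonTheorem - Blossoming bijection for maps with an minimal accessible orientation} forces the two trees to be equal. For surjectivity, starting from an arbitrary $\alpha_d$-tree $\rt$ of charge $0$: by Lemma~\ref{lemm: Opening and Closing stems constraint on alpha_d-trees} its closure $\rm$ is a bipartite plane map, clearly of maximal degree $\le d$ (degrees are preserved), so $\rm\in\bar{\mathcal{M}}^{(d)}$; moreover the inherited orientation on $\rm$ is an $\alpha_d$-orientation which is accessible (accessibility is preserved under closure) and minimal (the closure is performed clockwise, so creates no counterclockwise cycle, and $\rt$ itself, being a tree, has no cycle). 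By uniqueness in Property~\ref{claim: Unique minimal accessible alpha d orientation for a bipartite map}, this inherited orientation coincides with the canonical minimal accessible $\alpha_d$-orientation of $\rm$, so $\rm$ is the map whose canonical associated tree — via Theorem~\ref{theo: AlbenquePoulalhonTheorem - Blossoming bijection for maps with an minimal accessible orientation} — is $\rt$. Hence every $\alpha_d$-tree of charge $0$ is hit.

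The one point requiring genuine care — and I expect it to be the main obstacle — is showing that the orientation $\cO$ carried by an arbitrary $\alpha_d$-tree $\rt$ is automatically minimal \emph{once closed}, i.e. that no $\alpha_d$-tree can close to produce a counterclockwise cycle, and conversely that the accessible orientation delivered by Theorem~\ref{theo: AlbenquePoulalhonTheorem - Blossoming bijection for maps with an minimal accessible orientation} (which is a priori only accessible, not minimal) actually agrees with the inherited orientation on $\rm$ when $\rm$ is equipped with its minimal $\alpha_d$-orientation. The clean way to handle this is to use that Theorem~\ref{theo: AlbenquePoulalhonTheorem - Blossoming bijection for maps with an minimal accessible orientation} is an honest bijection between plane maps-with-minimal-accessible-orientation and blossoming trees-with-accessible-orientation-and-charge-$0$: the orientation on $\rt_\rm$ is uniquely determined by $\cO_\rm$ (recovered recursively from the leaves, as noted after the definition of $\alpha_d$-trees), and running the closure on $(\rt_\rm,\cO_\rm)$ recovers $(\rm,\cO_\rm)$ with its minimality intact. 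So minimality never needs to be "re-derived" on the tree; it is simply transported back and forth, and the essential observation — the only arithmetical input — is that closure preserves out-degrees vertex by vertex, which is exactly the statement that the weight $d+1$ of an opening stem equals the weight an edge contributes to the out-degree of its tail, and the weight $0$ of a closing stem matches the contribution at the head.

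Finally, to deduce Theorem~\ref{theo: Bijection Bipartite maps} from Proposition~\ref{prop:Bij_Bipartite_Alpha}, one takes the union over $d\ge 0$ and combines with Proposition~\ref{prop:bij_charge0} (the bijection between $\alpha_d$-trees of charge $0$ and well-charged trees $\mathcal{T}_0^{(d)}$), noting that $\bar{\mathcal{M}} = \bigcup_{d\ge 0}\bar{\mathcal{M}}^{(d)}$; the compatibility of these bijections across different values of $d$ is guaranteed by the stability Property~\ref{claim:stability for orientations}. The vertex-degree distribution and root-color statements propagate through all three arrows, since each of the intermediate correspondences preserves them.
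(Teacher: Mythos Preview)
Your proposal is correct and follows essentially the same approach as the paper: invoke Lemma~\ref{lemm: Opening and Closing stems constraint on alpha_d-trees} to see that the closure of an $\alpha_d$-tree is bipartite (hence lands in $\bar{\mathcal{M}}^{(d)}$), equip each map of $\bar{\mathcal{M}}^{(d)}$ with its canonical minimal accessible $\alpha_d$-orientation via Property~\ref{claim: Unique minimal accessible alpha d orientation for a bipartite map}, and then appeal to Theorem~\ref{theo: AlbenquePoulalhonTheorem - Blossoming bijection for maps with an minimal accessible orientation} for the bijection. The paper's proof is three sentences long because it treats Theorem~\ref{theo: AlbenquePoulalhonTheorem - Blossoming bijection for maps with an minimal accessible orientation} as a genuine black box: your extended discussion of minimality being ``transported back and forth'' and of out-degree preservation under closure is correct but already packaged inside that theorem and the remarks immediately preceding it, so you are re-deriving machinery the paper takes for granted.
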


\begin{proof}
By Lemma~\ref{lemm: Opening and Closing stems constraint on alpha_d-trees}, the closure of any $\alpha_d$-tree is bipartite. Hence, the closure operation defines a mapping from the set of $\alpha_d$-trees to $\bar{\mathcal{M}}^{(d)}$. Theorem~\ref{theo: AlbenquePoulalhonTheorem - Blossoming bijection for maps with an minimal accessible orientation} ensures that this closure is a bijection, by equipping each map of $\bar{\mathcal{M}}^{(d)}$ with its canonical $\alpha_d$-orientation, whose existence is granted by Property~\ref{claim: Unique minimal accessible alpha d orientation for a bipartite map}.
\end{proof}

\begin{rema}\label{rem:infinite_bijection}
The previous proposition associates to any bipartite $\rm$ an infinite number of $\alpha_d$-trees. Namely, one for each value of $d\geq \Delta(\rm)$. However all these trees have the same shape. Indeed, by Property~\ref{claim:stability for orientations}, the saturated edges are the same in any minimal $\alpha_d$-orientation on $\rm$ whenever $d\geq \Delta(\rm)$. Moreover, by the same proposition, their orientations only differ by an additive shift for half-edges incident to black edges. 
\end{rema}

		\subsubsection{Equivalence between $\alpha_d$-trees and  well-charged trees}\label{sec: Planted trees}
Given Proposition~\ref{prop:Bij_Bipartite_Alpha}, to prove Theorem~\ref{theo: Bijection Bipartite maps}, it suffices to establish the following correspondence between $\alpha_d$-trees and well-charged trees: 
\begin{prop}\label{prop:bij_charge0}
Fix $d\geq 1$, up to forgetting the orientation, the set of $\alpha_d$-trees of charge 0 corresponds to $\mathcal{T}_0^{(d)}$. 
\end{prop}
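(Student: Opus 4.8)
The plan is to prove that the map $\Phi$ which forgets the orientation is a bijection from the set of $\alpha_d$-trees of charge $0$ onto $\mathcal{T}_0^{(d)}$. Injectivity is immediate: a blossoming tree carries at most one $\alpha_d$-orientation (as noted just before Lemma~\ref{lemm: Opening and Closing stems constraint on alpha_d-trees}), so $\Phi$ cannot identify two distinct $\alpha_d$-trees. It remains to check that $\Phi$ lands in $\mathcal{T}_0^{(d)}$ and is surjective, and for both directions the crux is a flow balance over subtrees.

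The key identity is the following. Let $(\rt,\cO)$ be an $\alpha_d$-tree of charge $0$, let $v$ be a non-root vertex with parent $v'$, and let $\rt_v$ be the subtree of $\rt$ rooted at $v$. Summing $\out(w)=\alpha_d(w)$ over $w\in\rV(\rt_v)$, I would evaluate the right-hand side by counting half-edge contributions: each edge internal to $\rt_v$ contributes $d+1$, each opening stem $d+1$, each closing stem $0$, and the edge $\{v,v'\}$ contributes only $\cO(h_{\overrightarrow{vv'}})$, since it is the unique edge of $\rt$ joining $\rt_v$ to its complement. On the left-hand side I would use Lemma~\ref{lemm: Opening and Closing stems constraint on alpha_d-trees} (opening stems sit at black vertices, closing stems at white vertices) and bipartiteness to express the sums of the black degrees and of the white degrees over $\rt_v$ in terms of $|\rE(\rt_v)|$, the numbers of stems, and the colour of $v$. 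Comparing the two expressions yields
\[
\cO(h_{\overrightarrow{vv'}})=
\begin{cases}
d+\rc_\rt(v) & \text{if } v \text{ is black},\\
1+\rc_\rt(v) & \text{if } v \text{ is white}.
\end{cases}
\]
Since $\cO$ is $(d+1)$-fractional, $\cO(h_{\overrightarrow{vv'}})\le d+1$, which for black $v$ forces $\rc_\rt(v)\le 1$. Since $\cO$ is accessible and $\rt$ is a tree, the only path from $v$ to $\rho$ must be forward, so $\cO(h_{\overrightarrow{vv'}})\ge 1$, which for white $v$ forces $\rc_\rt(v)\ge 0$. Together with the facts that $\rt$ is bipartite of maximal degree at most $d$ and of total charge $0$, and with Lemma~\ref{lemm: Opening and Closing stems constraint on alpha_d-trees}, this proves $\Phi(\rt,\cO)\in\mathcal{T}_0^{(d)}$.

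For surjectivity, given $\rt\in\mathcal{T}_0^{(d)}$ I would \emph{define} $\cO$ by the same formulas: $\cO\equiv 0$ on closing stems, $\cO\equiv d+1$ on opening stems, $\cO(h_{\overrightarrow{vv'}}):=d+\rc_\rt(v)$ (resp. $1+\rc_\rt(v)$) on the half-edge of a non-root black (resp. white) vertex $v$ pointing to its parent $v'$, and $\cO(h_{\overrightarrow{v'v}}):=(d+1)-\cO(h_{\overrightarrow{vv'}})$. The main thing to verify is that all these values lie in $\{0,1,\dots,d+1\}$. Writing $n_{op}(v)$ and $n_{cl}(v)$ for the numbers of opening and closing stems incident to $v$, for a non-root black $v$ the identity $\rc_\rt(v)=\sum_{w}\rc_\rt(w)-n_{op}(v)$ (sum over the white children $w$ of $v$), together with the well-chargedness of those children, gives $\rc_\rt(v)\ge-n_{op}(v)\ge-(d-1)$, while $\rc_\rt(v)\le 1$ gives the upper bound; symmetrically, for a non-root white $v$ one uses $\rc_\rt(v)=\sum_w\rc_\rt(w)+n_{cl}(v)$ with $\rc_\rt(w)\le 1$ for the black children $w$ to obtain $0\le\rc_\rt(v)\le d-1$. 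In particular $\cO(h_{\overrightarrow{vv'}})\ge 1$ for every non-root $v$, and $\cO$ is a genuine $(d+1)$-fractional orientation of $\rt$ respecting the stem conventions. A direct local computation at each non-root vertex, plugging in the defining values of $\cO$ on its incident half-edges, then gives $\out(v)=\alpha_d(v)$; the same computation at the root yields $\out(\rho)=\alpha_d(\rho)$ precisely because $\rt$ has total charge $0$. Finally, since $\cO(h_{\overrightarrow{vv'}})\ge 1$ for all non-root $v$, the path from any vertex to $\rho$ in $\rt$ is forward, so $\cO$ is accessible; hence $(\rt,\cO)$ is an $\alpha_d$-tree of charge $0$ with $\Phi(\rt,\cO)=\rt$.

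I expect the main obstacle to be the range check in the surjectivity step: it is exactly there that both halves of the well-charged condition enter, and they must be applied not to the vertex $v$ itself but to its children (with $\Delta(\rt)\le d$ used to bound $n_{op}(v)$ or $n_{cl}(v)$). The flow-balance bookkeeping itself is routine but has to be carried out carefully, keeping track of the colour of $v$, of the single boundary half-edge $h_{\overrightarrow{vv'}}$, and of the distinction between the root and the other vertices.
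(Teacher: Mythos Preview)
Your proof is correct. The flow-balance identity you derive, namely $\cO(h_{\overrightarrow{vv'}})=d+\rc_\rt(v)$ for black $v$ and $\cO(h_{\overrightarrow{vv'}})=1+\rc_\rt(v)$ for white $v$, is exactly the content of the paper's Lemma~\ref{lemm: Charge-Excess correspondence} (there phrased as $\mathrm{exc}=d+\rc$ for black planted trees and $\mathrm{exc}=1+\rc$ for white planted trees), and the range checks and out-degree verifications you perform are the same as those done there.

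The approaches differ mainly in organisation. The paper isolates the planted-tree version as a separate lemma and proves it by \emph{induction on the number of vertices}: it decomposes a planted $\alpha_d$-tree at its root into smaller planted subtrees and closes the induction one layer at a time. You instead derive the charge--orientation identity in one shot by a \emph{global} flow-balance argument (summing $\out=\alpha_d$ over the whole subtree $\rt_v$), and then verify the $\alpha_d$ and accessibility conditions by a single local computation. Your route is more self-contained and avoids the inductive scaffolding; the paper's route has the advantage that Lemma~\ref{lemm: Charge-Excess correspondence} is a reusable statement about planted trees with arbitrary excess, which is invoked again later (e.g.\ in the proof of Lemma~\ref{lemm: Opening and Closing stems constraint on alpha_dk-trees} and implicitly in the enumerative Section~\ref{sec: trees gf}).
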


We will prove this result by induction. A \emph{planted $\alpha_d$-tree} $\rt$ is a planted  blossoming tree, endowed with an $\alpha_d$-orientation $\cO$ with the following constraint. Write $\rh$ for the root half-edge of $\rt$,  and define the \emph{excess} of $\rt$ by $\mathrm{exc}(\rt)\coloneqq\cO(\rh)$. Then, we require:
\begin{itemize}
 	\item $1\leq \mathrm{exc}(\rt) \leq d+1$, if the root vertex is black,
 	\item $1\leq \mathrm{exc}(\rt) \leq d$, if the root vertex is white.
 \end{itemize}
\begin{figure}[t]
  	\centering
  	\includegraphics[width=0.9\linewidth ,page=1]{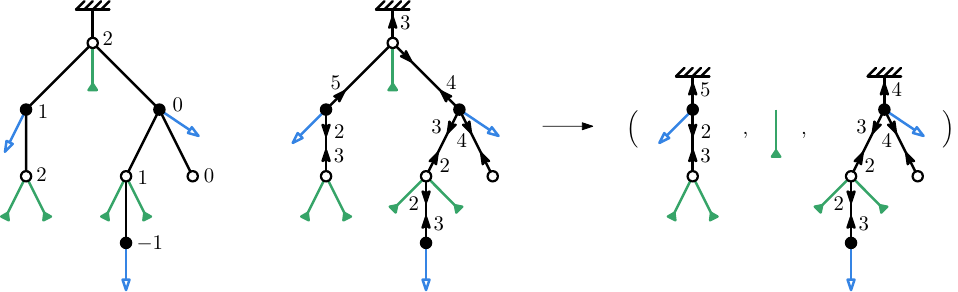}
 	\caption{\label{fig: decomposition planted tree} A well-charged white planted tree of charge $2$ represented with its charge function (left), and with its minimal accessible $\alpha_4$-orientation (middle). Its decomposition into two black well-charged planted trees and a closing stem (right). Both black trees are also endowed with their unique minimal accessible $\alpha_4$-orientation.}
\end{figure}

Planted $\alpha_d$-trees appear naturally in the recursive decomposition of $\alpha_d$-trees, see Figure~\ref{fig: decomposition planted tree}. Indeed, a direct adaptation of the proof of Lemma~\ref{lemm: Opening and Closing stems constraint on alpha_d-trees} ensures that:
\begin{clai}\label{claim:dec_alphad}
	Let $\rt$ be a white (resp. black) planted $\alpha_d$-tree rooted at a half-edge $\rh$. Let $h\neq \rh$ be a half-edge incident to the root vertex. Then: 
	\begin{itemize}
	    \item Either $h$ is a closing (resp. opening) stem, 
	    \item Or $h$ is part of an edge $e=\left\{h,h'\right\}$, and in that case the subtree of $\rt$ planted at $h'$ is a black (resp. white) planted $\alpha_d$-tree.
	\end{itemize}
\end{clai}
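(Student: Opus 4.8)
The plan is to adapt verbatim the structure of the proof of Lemma~\ref{lemm: Opening and Closing stems constraint on alpha_d-trees}, but now keeping careful track of the excess of the root half-edge rather than working with a genuine vertex of the tree. Fix a white planted $\alpha_d$-tree $\rt$ rooted at $\rh$, with root vertex $v\in\rV_\circ(\rt)$, and let $h\neq\rh$ be a half-edge incident to $v$. I would write $k$ for the number of edges (other than the one containing $\rh$) incident to $v$ that lead to a subtree — equivalently the number of children of $v$ — and $n_{op},n_{cl}$ for the numbers of opening and closing stems incident to $v$, so that $\deg(v)=1+k+n_{op}+n_{cl}$. The defining equation $\out(v)=\alpha_d(v)=\deg(v)$ (white case) gives
\begin{equation*}
1+k+n_{op}+n_{cl}=\cO(\rh)+(d+1)n_{op}+\sum_{w\text{ child of }v}\cO(h_{\overrightarrow{vw}}).
\end{equation*}

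Next I would bound the right-hand side. Since $\rt$ is equipped with an accessible $\alpha_d$-orientation, for every child $w$ of $v$ we have $\cO(h_{\overrightarrow{wv}})\geq 1$, hence $\cO(h_{\overrightarrow{vw}})\leq d$, so the last sum is at most $dk$. Using also $\cO(\rh)=\mathrm{exc}(\rt)\geq 1$ and $\Delta(\rt)\leq d$, the same manipulation as in Lemma~\ref{lemm: Opening and Closing stems constraint on alpha_d-trees} should collapse to an inequality forcing $n_{op}=0$; this proves that every half-edge incident to $v$ other than $\rh$ is either a closing stem or part of an edge of $\rt$. For such an edge $e=\{h,h'\}$ with $h\sim v$, I then need to check that the subtree $\rt'$ planted at $h'$ is a valid black planted $\alpha_d$-tree: it is a blossoming tree by construction, it is bipartite with degree at most $d$ since $\rt$ is, it inherits the (necessarily accessible, by Proposition~\ref{prop: Unique minimal orientation}, or directly because accessibility of a tree orientation is hereditary on subtrees) $\alpha_d$-orientation $\cO$ restricted to $\rt'$, and its excess is $\mathrm{exc}(\rt')=\cO(h')=(d+1)-\cO(h)$. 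The constraint $1\leq\mathrm{exc}(\rt')\leq d+1$ (black case) is then equivalent to $0\leq\cO(h)\leq d$, i.e.\ to $\cO(h)\neq d+1$, which holds because $\cO(h)=d+1$ would make $h$ an opening stem incident to the white vertex $v$, contradicting the part just proved. The black case is symmetric: swap the roles of black/white, of opening/closing stems, use $\out(v)=d\deg(v)$, and the analogue of the second half of the proof of Lemma~\ref{lemm: Opening and Closing stems constraint on alpha_d-trees} (the computation with $k,n_{op},n_{cl}$ for a black vertex, now with the extra term $\cO(\rh)$ on the outgoing side) to force $n_{cl}=0$; one then checks $\mathrm{exc}(\rt')=(d+1)-\cO(h)\in\{1,\dots,d\}$ for each child subtree $\rt'$, using $1\leq\cO(h)\leq d$ which follows because $h$ is neither an opening stem nor saturated towards $v$.

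The one point that needs slightly more care than in Lemma~\ref{lemm: Opening and Closing stems constraint on alpha_d-trees} is the bookkeeping of the root half-edge: in that lemma the root vertex either is the tree root (with $k$ children) or not (with $k-1$ children), whereas here the planted root half-edge $\rh$ plays the role of "the edge going up", so $v$ always has exactly $k$ children and $\rh$ contributes its own term $\cO(\rh)$ to $\out(v)$. I expect the arithmetic to close with room to spare because $\mathrm{exc}(\rt)\geq 1$ is exactly the analogue of the ``$+1$'' slack in the non-root case of the earlier lemma. The main (mild) obstacle is therefore simply to organize the two color cases so that the inequality chain is transparent and to verify the excess bounds on the child subtrees cleanly; since the statement is explicitly flagged in the text as ``a direct adaptation of the proof of Lemma~\ref{lemm: Opening and Closing stems constraint on alpha_d-trees}'', no new idea is required, and I would keep the write-up short, treating the white case in full and indicating the black case by the symmetry black$\leftrightarrow$white, opening$\leftrightarrow$closing, $d\leftrightarrow 1$ in the coefficients.
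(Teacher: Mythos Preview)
Your proposal is correct and follows exactly the route the paper indicates: the paper gives no proof beyond the sentence ``a direct adaptation of the proof of Lemma~\ref{lemm: Opening and Closing stems constraint on alpha_d-trees} ensures that'', and your adaptation is the intended one. Two small points of phrasing to tighten. In the white case, your justification that $\cO(h)\neq d+1$ because ``$\cO(h)=d+1$ would make $h$ an opening stem'' is not quite right: a half-edge belonging to a genuine edge can perfectly well carry value $d+1$ without being a stem. The correct reason is the one you already established two lines earlier via accessibility ($\cO(h')\geq 1$ hence $\cO(h)\leq d$), or equivalently $\cO(h)\leq\out(v)=\deg(v)\leq d$. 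In the black case, the bound $\cO(h)\geq 1$ (needed so that the white child subtree has excess at most $d$) is not a consequence of accessibility at $v$ but of the $\alpha_d$-constraint at the white child $w$: one has $\cO(h')\leq\out(w)=\deg(w)\leq d$, hence $\cO(h)=d+1-\cO(h')\geq 1$; this is precisely the argument of Property~\ref{claim: Saturated edges are from black to white endpoints}.
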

\smallskip

Similarly, a \emph{planted well-charged tree} $\rt$ is a planted tree, satisfying the same charge constraints as a well-charged tree, and rooted at a dangling half-edge (which is neither an opening nor a closing stem). 

Then, we have the following one-to-one correspondence, see Figure~\ref{fig: decomposition planted tree}:
\begin{lemm}\label{lemm: Charge-Excess correspondence}
Fix $d\geq 1$. For $1\leq i\leq d+1$, the operation that consists in forgetting the orientation of planted $\alpha_d$-trees yields a bijection between:
\begin{enumerate}
	\item[1.] The set of black planted $\alpha_d$-trees with excess $i$, and the set of black planted well-charged trees with maximal degree $d$, and of charge $i-d$.  
\end{enumerate}
And, for $1\leq i\leq d$, between:
\begin{enumerate}
 	\item[2.]  The set of planted white $\alpha_d$-trees with excess $i$, and the set of white planted well-charged trees with maximal degree $d$, and of charge $i-1$.   
\end{enumerate}
\end{lemm}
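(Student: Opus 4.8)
The plan is to prove Lemma~\ref{lemm: Charge-Excess correspondence} by a simultaneous induction on the number of vertices of the planted tree, using Claim~\ref{claim:dec_alphad} as the structural decomposition engine and keeping careful track of the bookkeeping relating the excess of an $\alpha_d$-tree to the charge of the corresponding well-charged tree. The key arithmetic identity to isolate first is the local relation at the root vertex: if $v$ is the root vertex of a planted tree rooted at $\rh$, then $\out(v) = \alpha_d(v)$ forces $\cO(\rh) = \alpha_d(v) - \sum_{h \sim v, h \neq \rh} \cO(h)$, and since opening stems contribute $d+1$, closing stems contribute $0$, and an outgoing half-edge toward a child $w$ contributes $(d+1) - \cO(h_{\overrightarrow{wv}})$, we can read off the excess in terms of the excesses of the planted subtrees hanging off $v$ and the number of stems at $v$. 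Matching this against the definition of charge (closing stems count $+1$, opening stems count $-1$, and the charge of a subtree is additive over the children) is the heart of the argument.

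First I would set up the induction base case: a planted $\alpha_d$-tree consisting of a single vertex $v$ carrying only stems. If $v$ is black, by Lemma~\ref{lemm: Opening and Closing stems constraint on alpha_d-trees} all its non-root half-edges are opening stems, say $n_{op}$ of them, so $\deg(v) = n_{op}+1$, and $\alpha_d(v) = d(n_{op}+1) = (d+1)n_{op} + \mathrm{exc}$, giving $\mathrm{exc} = d - n_{op}$; meanwhile the associated well-charged tree is the single black vertex with $n_{op}$ opening stems, of charge $-n_{op} = \mathrm{exc} - d$, matching item~1. The constraint $1 \le \mathrm{exc} \le d$ here (the single-vertex black tree cannot reach $\mathrm{exc}=d+1$) is consistent since charge $i-d$ with $i=d+1$ would mean charge $0$, which requires at least one child. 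The white single-vertex case is symmetric: all non-root half-edges are closing stems, $n_{cl}$ of them, $\mathrm{exc} = 1$ regardless (since $\out(v) = \deg(v)$ and the closing stems contribute $0$, we get $\cO(\rh) = \deg(v) - \deg(v) + 1$... more carefully, $\out(v)=\deg(v)$, $\cO(\rh)$ plus the contributions of the $n_{cl}$ closing stems which are $0$ equals $\deg(v)$, but $\deg(v) = n_{cl}+1$, wait — this needs the indegree side too). Let me be careful: for white $v$ with only closing stems and the root half-edge, $\out(v) = \cO(\rh) = \alpha_d(v) = \deg(v)$? No — $\out(v) = \sum_{h\sim v}\cO(h) = \cO(\rh) + 0 = \cO(\rh)$ and this must equal $\deg(v) = n_{cl}+1$; but also $1 \le \mathrm{exc} \le d$ and $\deg(v) \le d$, so this is fine, and the charge is $n_{cl} = \mathrm{exc}-1$, matching item~2.

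Then the inductive step: given a planted $\alpha_d$-tree $\rt$ with root vertex $v$, apply Claim~\ref{claim:dec_alphad} to decompose $\rt$ at $v$ into stems plus planted subtrees of the opposite color, each of which is a planted $\alpha_d$-tree of strictly smaller size. Apply the induction hypothesis to each subtree to get a planted well-charged tree of the opposite color with a known charge. Reassemble these well-charged subtrees and stems at $v$ to form a planted tree $\rt'$; one must check $\rt'$ satisfies the well-charged constraints at $v$ — i.e. for a non-root vertex the relevant charge bound — which translates precisely into the excess bound $1 \le \mathrm{exc}(\text{subtree}) \le d+1$ or $\le d$ from the definition of planted $\alpha_d$-tree, so the constraint transfer is automatic. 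Conversely, given a planted well-charged tree, decompose at the root, recurse to attach orientations to each subtree, and reconstruct the $\alpha_d$-orientation at $v$ by the local flow-conservation equation above; one checks this reconstructed orientation is accessible (each edge toward a child is forward since the child-subtree's orientation is accessible and the reconstructed value at $v$ side is $\ge 1$ by the charge constraint) and satisfies the excess bound, so the two maps are mutually inverse. The main obstacle I anticipate is not conceptual but combinatorial precision: correctly handling the "root vs.\ non-root" distinction (the $+1$ slack in Lemma~\ref{lemm: Opening and Closing stems constraint on alpha_d-trees}'s proof for non-root black vertices) so that the excess/charge dictionary is exactly $i-d$ for black and $i-1$ for white, and verifying that the extremal values $i=d+1$ (black) and $i=d$ (white) are attained precisely when the root carries no stems of the forbidden type and at least one child — in other words, making sure the ranges of $i$ in the statement match the ranges of achievable charges on the well-charged side without off-by-one errors. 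This is best done by writing the local equation once, cleanly, and then invoking it uniformly in both directions of the bijection.
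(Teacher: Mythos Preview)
Your proposal is correct and follows essentially the same approach as the paper: induction on the number of vertices, decomposition at the root via Claim~\ref{claim:dec_alphad}, and the local out-degree equation $\alpha_d(\rho) = \cO(\rh) + \sum_j (d+1-\mathrm{exc}(\rt_j))$ (plus stem contributions) to translate between excess and charge. The paper's write-up is slightly more compressed—it absorbs the base case into the induction and only writes out the white case in each direction—but the argument and the key arithmetic identity are the same as yours.
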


\begin{proof}
We proceed by induction on the number of vertices. First, we establish that planted $\alpha_d$-trees are planted well-charged trees. Fix $1\leq i\leq d$, and let $\rt$ be a planted white $\alpha_d$-tree with excess $i$. We decompose $\rt$ at its root vertex $\rho$. Let $k+1$ be the degree of $\rho$ and $l$ be the number of its incident edges, so that $\rho$ is incident to $k-l$ closing stems. Next, write $(\rt_1,\ldots,\rt_{l})$ for the sequence of planted subtrees corresponding to the children of $\rho$. By Claim~\ref{claim:dec_alphad}, $\rt_i$ is a black planted $\alpha_d$-tree, for any $1\leq i \leq l$.

Let $\cO$ be the orientation on $\rt$. Since $\rho$ is white and $\deg(\rho)=k+1$, we have: 
\begin{equation*}
k+1 = \sum_{h\sim \rho}{\cO(h)} =  i + \sum_{j=1}^l\big( d+1 - \mathrm{exc}(\rt_j)\big). 
\end{equation*} 
Recall that $\rc(\rt)$ denotes the charge of $\rt$. By the induction hypothesis, we have $\mathrm{exc}(\rt_j)=d+\rc(\rt_j)$, for any $1\leq j \leq l$, so that: 
\[
k+1 = i+l - \sum_{j=1}^l \rc(\rt_j).
\]
Finally, we get:
\begin{equation*}
\rc(\rt) = \sum_{j=1}^l \rc(\rt_j) + (k-l) = i-1\geq 0,
\end{equation*}
which concludes the case of white planted $\alpha_d$-trees. The case of black planted $\alpha_d$-trees is similar. 

\medskip

Reciprocally, fix $c\geq 0$ and $d\geq c+1$, and let $\rt$ be a planted white well-charged tree of charge $c$ and with vertex degree at most $d$. We define recursively an orientation $\cO$ on $\rt$. As before, let $k+1$ be the degree of $\rho$, let $l$ be the number of its incident edges and write $(\rt_1,\ldots,\rt_l)$ for the sequence of its planted subtrees. 
Moreover, we denote by $h_1,\ldots,h_l$ the half-edges that connect $\rho$ to $\rt_1,\ldots,\rt_l$, and by $h_{l+1},\ldots,h_k$ the closing stems incident to $\rho$.

Fix $j\in\{1,\ldots,l\}$. By the induction hypothesis, $\rt_j$ can be endowed with a unique orientation that turns it into a planted $\alpha_d$-tree with excess $d+\rc(\rt_j)$. It only remains to define the orientation of the half-edges incident to $\rho$. We set
\begin{equation*}
\cO(h_j) = \begin{cases}
1-\rc(t_j),& \text{for } 1\leq j\leq l\text{,}\\
0, &\text{for }l+1\leq j\leq k\text{,}
\end{cases}
\end{equation*}
and we set $\cO(\rh)=\rc(\rt)+1$ for the root half-edge $\rh$.  This is legitimate since $1-\rc(\rt_j)=1+d-\mathrm{exc}(t_j)\in\left\{0,\ldots,d\right\}$, for any $1\leq j\leq l$. Then, we have: 
\begin{equation*}
	\sum_{h\sim \rho}\cO(h) = \cO(\rh) + \sum_{j=1}^l\cO(h_j) = \rc(\rt) + 1 + l - \sum_{j=1}^l\rc(\rt_j),
\end{equation*}
and hence $\sum_{h\sim \rho}\cO(h)=\deg(\rho)$. Thus, $\cO$ is an $\alpha_d$-orientation which is clearly accessible. This concludes the case of white planted well-charged trees. The case of black planted well-charged trees is similar.

\medskip
Since there exists at most one orientation with prescribed out-degree on a tree, it follows that this operation is the reciprocal of forgetting the orientation of a planted $\alpha_d$-tree. This concludes the proof.
\end{proof}

\begin{proof}[Proof of Proposition~\ref{prop:bij_charge0}]
Let $\rt \in \mathcal{T}^{(d)}_0$. Define $\rho$, $k$, $l$ and $(\rt_1,\ldots,\rt_l)$ as in the proof of Lemma~\ref{lemm: Charge-Excess correspondence}.

For any $1\leq j\leq l$, endow $\rt_j$ with its canonical orientation given by Lemma~\ref{lemm: Charge-Excess correspondence}. Since $\rc(\rt)=0$, it follows that $\sum_j\rc(\rt_j)+(k-l)=0$, and so $\sum_{j}\left(d+1-\mathrm{exc}(\rt_j)\right)=k$. Hence, $\rt$ can be uniquely endowed with an $\alpha_d$-orientation following the same idea as in the proof of Lemma~\ref{lemm: Charge-Excess correspondence}.  

Reciprocally, we can prove along the same lines that forgetting the orientation of a white $\alpha_d$-tree of charge 0 yields a white well-charged tree of charge 0. 

The case of black-rooted trees is exactly similar and is left to the reader. 
\end{proof}

\begin{proof}[Proof of Theorem~\ref{theo: Bijection Bipartite maps}]
Fix $d\geq 1$, combining Proposition~\ref{prop:Bij_Bipartite_Alpha} and Proposition~\ref{prop:bij_charge0} gives a bijection between $\bar{\mathcal{M}}^{(d)}$ and $\mathcal{T}_0^{(d)}$, which preserves the vertex-degree distribution and the color of the root vertex.

This gives the existence of (at least) one bijection between $\bar{\mathcal{M}}$ and $\mathcal{T}$, by picking a canonical value of $d$ for any $\rm \in \bar{\mathcal{M}}$. Hence, this proves the theorem. In addition, observe that the bijection does not depend on this choice of $d$. Indeed, by Remark~\ref{rem:infinite_bijection}, for a fixed map, the family of $\alpha_d$-trees obtained by Proposition~\ref{prop:Bij_Bipartite_Alpha} all correspond to the same well-charged tree.
\end{proof}

	\subsection{Extending the BMS bijection: the case of \dtrum s~and \dcorn s}
		In this section, we consider the closure of well-charged trees with non-zero charge. We establish a new bijective correspondence between these trees and a family of bipartite maps with an additional marked vertex, which we call \dtrum s and \dcorn s. 
		
		\subsubsection{\dtrum s~and \dcorn s}\label{sec: def cuts trumpets corners}
\begin{defi}\label{def: cuts}
Consider a planar map $\rm$ with an additional marked vertex $\tau$. We write $\rho$ for the root vertex of $\rm$. Then, a \emph{cut} of $(\rm,\tau)$ is defined as a partition of $\mathrm{V}(\rm)$ into two sets $C=(R,S)$ such that $\rho\in R$ and $\tau\in S$. 
Its \emph{cut-set} is the subset of edges that connect a vertex in $R$ to a vertex in $S$, and its \emph{weight} is the number of such edges. 

Moreover, if $\rm$ is bipartite, $C=(R,S)$ is said to be \emph{black} (resp. \emph{white}) if all vertices in $S$ incident to edges of the cut-set are black (resp. white).

A black (resp. white) cut is said to be \emph{minimal} if its weight is minimal among all the black (resp. white) cuts. 
\end{defi}

\begin{figure}[t]
  	\centering
  	\includegraphics[width=0.31\linewidth ,page=2]{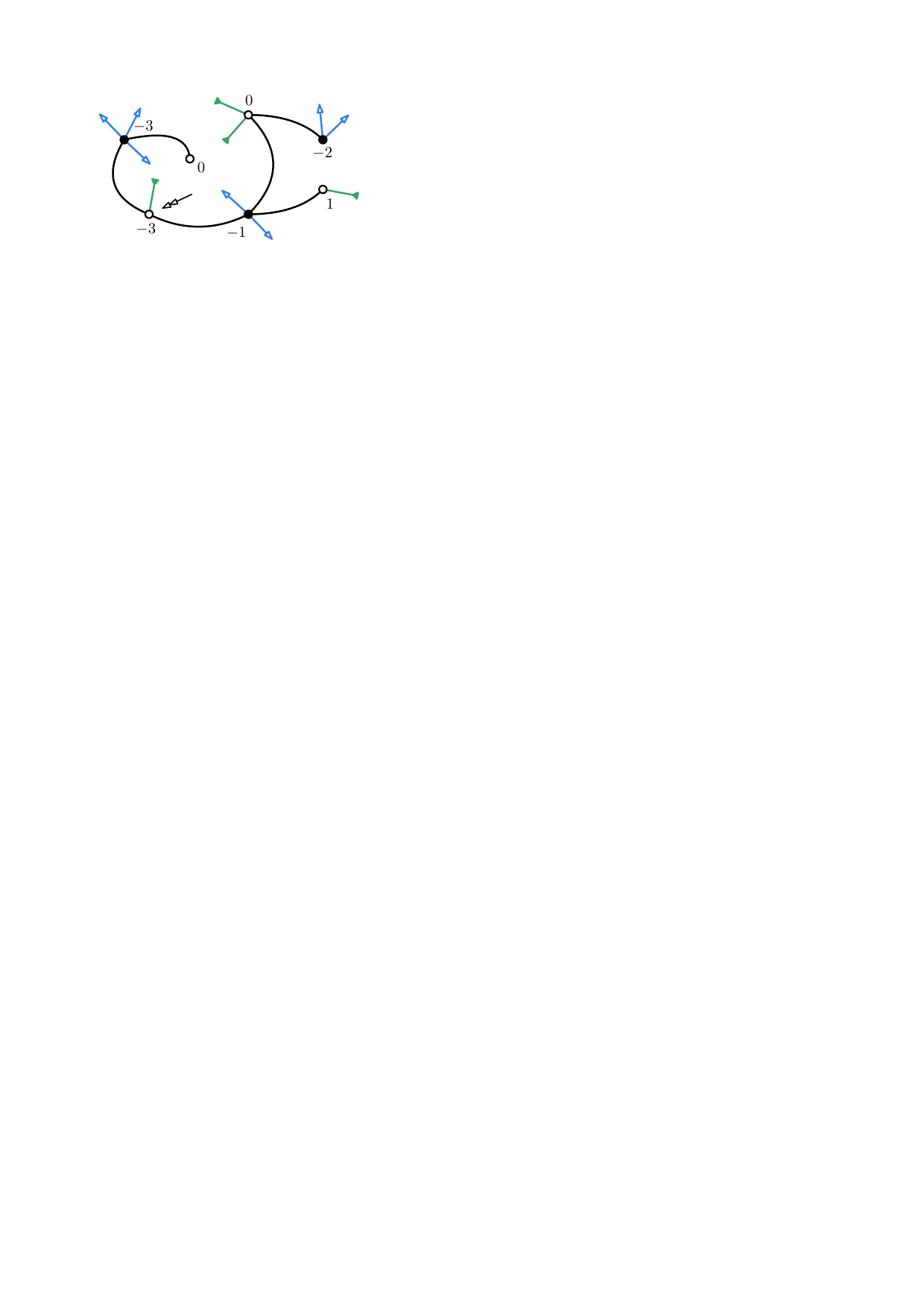}
  	\includegraphics[width=0.31\linewidth ,page=3]{CompleteClosureAndCornet.pdf}
  	\includegraphics[width=0.31\linewidth ,page=4]{CompleteClosureAndCornet.pdf}
 	\caption{\label{fig: complete closure and corner} A well-charged tree of charge $-3$ (left), its closure (middle), and its complete closure $(\rm,\rho,\tau)$ which is a \dcorn\, with $\deg(\tau)=3$ (right). Each of these is endowed with its minimal $\alpha_{5,3}^+$-orientation.} 
\end{figure}

Note that the trivial partition $(\rV\setminus\{\tau\},\{\tau\})$ defines a cut -- called the \emph{trivial cut} -- which has the same color as $\tau$, and whose weight is equal to $\deg(\tau)$. This justifies the following definition: 
\begin{defi}
A map with an additional marked vertex $(\rm,\tau)$ is said to be \emph{tight} if the trivial partition is minimal, and is said to be \emph{strictly tight} if it is the unique minimal cut of this color.

A \emph{tight} pair $(\rm,\tau)$ such that $\tau$ is black is called a \emph{\dtrum}, and a \emph{strictly tight} pair $(\rm,\tau)$ such that $\tau$ is white is called a \emph{\dcorn}.
\end{defi}

\begin{rema}
By duality, rooted planar maps with an additional marked vertex correspond to rooted planar maps with an additional marked face, colloquially known as \emph{annular maps} or \emph{maps on the cylinder}.

Annular maps satisfying a tightness condition were first considered in~\cite{BernardiFusy_TrigQuadPent} to enumerate $d$-angulations of girth $d$. By duality, bipartite maps correspond to Eulerian maps. In that setting, annular maps with tightness conditions  -- called trumpets and cornets -- were introduced and enumerated in~\cite[Section~3.3]{AlbenqueBouttier_slices}. They correspond exactly to the dual of \dcorn{s} and \dtrum{s}.
\end{rema}

\begin{rema}
Note that there is an asymmetry in the definition of \dtrum{s} and \dcorn{s}. Indeed, we do not consider the family of strictly tight triples in which $\tau$ is black, nor the family of tight triples in which $\tau$ is white. 

This asymmetry, already present in the definitions of trumpets and cornets in~\cite{AlbenqueBouttier_slices}, stems from the fact that the definition of $\alpha_d$-orientations -- and hence of $\alpha_d$-trees -- introduces an asymmetry between black and white vertices. The closure operation then produces families of objects that are asymmetric in black and white, as stated in the next theorem.
\end{rema}

\medskip

The closure $\Psi(\rt)$ of a well-charged tree $\rt$ with non-zero charge is a blossoming map with  unmatched closing stems if $c(\rt)>0$, and unmatched opening stems otherwise. The \emph{complete closure} of $\rt$ is the map obtained from $\Psi(\rt)$ as follows: add a new marked vertex $\tau$ in the outer face, and replace each unmatched stem by an edge connecting its extremity to $\tau$, see Figure~\ref{fig: complete closure and corner}. To ensure that the complete closure is bipartite, we set that $\tau$ is black if $c(\rt)>0$ and white otherwise.
\medskip

The main result of this section is the following:
\begin{theo}\label{theo: Bijection well-rooted trees with general charge}
Let $k>0$. The complete closure operation is a one-to-one correspondence between ${\mathcal{T}}_k$, and the set of \dtrum{s} $(m,\tau)$, with $\deg(\tau)=k$.

Similarly, the complete closure operation is a one-to-one correspondence between ${\mathcal{T}}_{-k}$, and the set of \dcorn{s} $(m,\tau)$, with $\deg(\tau)=k$.

Moreover, these bijections preserve the vertex degree distribution and the color of the root vertex. 
\end{theo}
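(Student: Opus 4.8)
\textbf{Proof plan for Theorem~\ref{theo: Bijection well-rooted trees with general charge}.}

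The plan is to imitate the two-step strategy used for Theorem~\ref{theo: Bijection Bipartite maps}: first identify, via Theorem~\ref{theo: AlbenquePoulalhonTheorem - Blossoming bijection for maps with an minimal accessible orientation}, the family of maps that arise as complete closures of $\alpha_d$-trees of non-zero charge, and then invoke Proposition~\ref{prop:bij_charge0} (more precisely, the planted-tree correspondence of Lemma~\ref{lemm: Charge-Excess correspondence}) to translate the orientation-decorated trees into well-charged trees. Concretely, I would first fix $k>0$ and a well-charged tree $\rt\in\mathcal{T}_{-k}$, so that its closure $\Psi(\rt)$ is a blossoming map with exactly $k$ unmatched opening stems. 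By Lemma~\ref{lemm: Opening and Closing stems constraint on alpha_d-trees} these unmatched opening stems are all incident to black vertices, so attaching them to a new white vertex $\tau$ yields a bipartite map; the resulting triple $(\rm,\rho,\tau)$ has $\deg(\tau)=k$ and carries a minimal accessible orientation (the orientation $\alpha_{d,k}^+$ of the figures, i.e. the $\alpha_d$-orientation on the tree part extended by saturation on the new edges to $\tau$). Accessibility and minimality are preserved exactly as in the charge-$0$ case: the closure is performed clockwise, so no counterclockwise cycle is created, and every vertex can still reach $\rho$.

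The crux is to characterize the image of this map: I must show that $(\rm,\tau)$ is \emph{strictly tight} (for the \dcorn{s} case) and, conversely, that every strictly tight pair with $\tau$ white and $\deg(\tau)=k$ arises this way, canonically. For the forward direction, the key observation is that in the induced orientation the new edges at $\tau$ are the only edges entering $\tau$ and they are all saturated from black to white; the out-degree condition $\out(v)=\alpha_d(v)$ forces, via a global flow/counting argument, that any other white cut $C=(R,S)$ must route at least as much flow across its cut-set as the trivial cut does, with equality only for the trivial partition. I would make this precise by summing $\out$ over $S$: because edges inside $S$ contribute the same total regardless of orientation while edges of the cut-set must carry flow out of $S$ toward $R$ in the direction dictated by minimality (here one uses that $S$ contains $\tau$, which is white and has no forward outgoing edge to $R$), one gets a lower bound on the weight of the cut-set, saturated precisely when $S=\{\tau\}$. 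This is the step I expect to be the main obstacle, since it requires carefully relating the combinatorial notion of a minimal cut to the flow encoded by a minimal accessible $\alpha_d$-orientation; the asymmetry between black and white (opening stems only on black vertices, giving tightness rather than strict tightness for \dtrum{s}) has to be tracked throughout.

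For the converse, given a strictly tight pair $(\rm,\rho,\tau)$ with $\tau$ white of degree $k$, I would delete $\tau$ together with its $k$ incident edges to obtain a bipartite plane blossoming map $\rm'$ with $k$ unmatched opening stems (the half-edges freed at the former neighbours of $\tau$, which are black by the whiteness of the cut), endow $\rm'$ with the minimal accessible $\alpha_d$-orientation (for any $d\geq\max(\Delta(\rm'),k-1)$) extended so that the freed stems are opening stems carrying value $d+1$, and apply the second part of Theorem~\ref{theo: AlbenquePoulalhonTheorem - Blossoming bijection for maps with an minimal accessible orientation} to recover a blossoming tree $\rt_\rm$ of charge $-k$ equipped with an accessible $\alpha_d$-orientation. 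That $\rt_\rm$ is an $\alpha_d$-tree is immediate, and Proposition~\ref{prop:bij_charge0} (via Lemma~\ref{lemm: Charge-Excess correspondence}) shows that forgetting the orientation gives a well-charged tree in $\mathcal{T}_{-k}^{(d)}$; strict tightness is exactly what guarantees that this inverse is well-defined, i.e. that no choice of which stems to cut is needed. Finally, the independence of the construction on the auxiliary parameter $d$ follows from Remark~\ref{rem:infinite_bijection} as in the proof of Theorem~\ref{theo: Bijection Bipartite maps}, and the preservation of the vertex-degree distribution and of the root colour is inherited from Proposition~\ref{prop:Bij_Bipartite_Alpha} since the complete closure only adds the single vertex $\tau$. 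The \dtrum{s} case ($\mathcal{T}_k$, $k>0$, $\tau$ black, \emph{tight} rather than strictly tight) is treated along the same lines, the difference being that unmatched \emph{closing} stems are incident to white vertices, so $\tau$ is black and the relevant cuts are black; here the counting argument yields only tightness (not uniqueness of the minimal cut), which is precisely the right condition because the Albenque–Poulalhon inverse is applied to a map with unmatched closing stems and the choice of the canonical spanning tree absorbs the potential non-uniqueness.
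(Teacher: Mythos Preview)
Your overall two-step plan mirrors the paper's, and the forward-direction sketch (closure inherits a minimal accessible orientation, then a flow-counting argument over a cut gives the tightness bound) is essentially one half of the paper's Lemma~\ref{lemm: Characterization tightness}. However, there are two genuine gaps.

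First, the orientation you describe does not exist. You invoke Lemma~\ref{lemm: Opening and Closing stems constraint on alpha_d-trees} and speak of ``the $\alpha_d$-orientation on the tree part'' for $\rt\in\mathcal{T}_{-k}$, and in the converse you propose to ``endow $\rm'$ with the minimal accessible $\alpha_d$-orientation \dots\ extended so that the freed stems are opening stems carrying value $d+1$''. A global sum check shows this is inconsistent: on a bipartite blossoming object with opening stems on black vertices and closing stems on white vertices, $\sum_v\alpha_d(v)=(d+1)|E|+d\,n_{op}+n_{cl}$, whereas the total outdegree in any $(d+1)$-fractional orientation is $(d+1)(|E|+n_{op})$; these differ by $n_{op}-n_{cl}=\pm k\neq0$. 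The remedy is to shift the prescribed outdegree at the root by $\pm k$, i.e.\ to work with $\alpha_{d,k}^{\pm}$ as in~\eqref{eq:alphadk} and~\eqref{eq: alphadk for trees}; Lemma~\ref{lemm: Opening and Closing stems constraint on alpha_d-trees} then has to be reproved for these orientations (the paper does this as Lemma~\ref{lemm: Opening and Closing stems constraint on alpha_dk-trees}), and your reference to Proposition~\ref{prop:bij_charge0} must likewise be replaced by its non-zero-charge analogue (Lemma~\ref{lemm:treesk}).

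Second, and more seriously, your converse begs the question. Given a \dcorn\ $(\rm,\tau)$, you delete $\tau$ and then simply ``endow $\rm'$ with the minimal accessible orientation'', asserting that ``strict tightness is exactly what guarantees that this inverse is well-defined''. But proving that the (correct) $\alpha_{d,k}^{+}$-orientation is \emph{feasible} on a given \dcorn\ is precisely the hard step; there is no analogue of Property~\ref{claim: Unique minimal accessible alpha d orientation for a bipartite map} to invoke, since $\alpha_{d,k}^{\pm}$ is not automatically feasible on every pointed bipartite map. The paper closes this gap by recasting the problem as a flow network (source/sink $\{\rho,\tau\}$, capacities $1$ and $d$ on the two directions of each edge) and applying max-flow min-cut: (strict) tightness forces the minimum cut to have capacity $k$, hence a flow of value $k$ exists, and the desired orientation is read off from it. Without this or an equivalent argument, surjectivity of the complete closure is unproven. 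Incidentally, the tight/strictly-tight asymmetry is not ``absorbed by the Albenque--Poulalhon inverse'' as you suggest; it arises inside this max-flow argument, from the capacity asymmetry between the two directions of an edge and the colour of $\tau$.
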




The general strategy to establish Theorem~\ref{theo: Bijection well-rooted trees with general charge} is similar to the proof of Theorem~\ref{theo: Bijection Bipartite maps}. First, we define a generalization of $\alpha_d$-orientations that characterize \dtrum s and \dcorn s (Section~\ref{sub:alpha+-}). Then, we apply the general bijective scheme of Theorem~\ref{theo: AlbenquePoulalhonTheorem - Blossoming bijection for maps with an minimal accessible orientation}, and prove that the blossoming trees obtained are in bijection with well-charged trees (Section~\ref{sec:alphadktrees}).

\subsubsection{Definition and characterization of $\alpha_{d,k}^{+/-}$-orientations}\label{sub:alpha+-}
Fix $d \geq k \geq 1$, and let $(\rm,\rho,\tau)$ be a bipartite planar map $\rm$, rooted at $\rho$ and with a marked vertex $\tau$. We introduce two functions $\alpha_{d,k}^-: \mathrm{V}(\rm)\rightarrow \mathbb{Z}_{\geq 0}$ and $\alpha_{d,k}^+ : \mathrm{V}(\rm)\rightarrow \mathbb{Z}_{\geq 0}$, which extend the definition of $\alpha_d$, as follows:
\begin{align}\label{eq:alphadk}
\alpha_{d,k}^-(v) &\coloneqq \alpha_{d}(v) - k\,\mathbf{1}_{\left\{v=\rho\right\}} + k\,\mathbf{1}_{\left\{v=\tau\right\}}, \\ \alpha_{d,k}^+(v) &\coloneqq \alpha_{d}(v) + k\,\mathbf{1}_{\left\{v=\rho\right\}}- k\,\mathbf{1}_{\left\{v=\tau\right\}}.
\end{align}

In the next lemma, we prove that $\alpha_{d,k}^{+/-}$-orientations characterize \dtrum s and \dcorn s. Similar ideas already appeared in~\cite[Proposition~19]{BernardiFusy_TrigQuadPent} in the non-bipartite case (and in the dual setting):
\begin{lemm}\label{lemm: Characterization tightness}
Fix $d\geq k >0$.  Let $(\rm,\tau)$ be a pair with $\Delta(\rm)\leq d$, and $\deg(\tau)=k$.

If $\tau$ is black, then: 
\[
	(m,\tau)\text{ is a \dtrum}\quad\Leftrightarrow\quad \alpha_{d,k}^-\text{ is feasible on }(\rm,\tau),
\]
and, in that case, any $\alpha_{d,k}^-$-orientation is accessible.

If $\tau$ is white, then: 
\[
	(m,\tau)\text{ is a \dcorn}\quad\Leftrightarrow\quad \alpha_{d,k}^+\text{ is feasible and quasi-accessible on }(\rm,\tau),
\]
where quasi-accessible means that for any $v\neq \tau$, there exists a forward path from $v$ to $\rho$. 
\end{lemm}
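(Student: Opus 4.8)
The proof naturally splits into two dual-looking statements, one for $\tau$ black (d-trumpets, orientation $\alpha_{d,k}^-$) and one for $\tau$ white (d-cornets, orientation $\alpha_{d,k}^+$). I will treat them separately but with a parallel strategy based on a standard max-flow/min-cut argument translated into the language of $\alpha$-orientations, following the idea already present in \cite[Proposition~19]{BernardiFusy_TrigQuadPent}. The key observation is that an $\alpha$-orientation of a bipartite map exists if and only if a certain integer flow exists in an auxiliary network, and the existence of that flow is governed by cut constraints; the colorings in the definitions of black/white cuts are exactly what make the relevant constraints match $\alpha_{d,k}^{\pm}$.

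\textbf{The black case ($\tau$ black, d-trumpet).} First I would show that feasibility of $\alpha_{d,k}^-$ is equivalent to the existence of an accessible $\alpha_{d,k}^-$-orientation: by Property~\ref{claim: Unique minimal accessible alpha d orientation for a bipartite map} and the accessibility part of Proposition~\ref{prop: Unique minimal orientation}, it suffices to exhibit \emph{one} accessible $\alpha_{d,k}^-$-orientation to conclude that all of them are accessible. Actually I expect accessibility to come almost for free here, as in the $\alpha_d$ case: since $d\geq \Delta(\rm)$, every half-edge incident to a white vertex carries flow strictly less than $d+1$, hence is forward, and since every edge is incident to a white vertex, every edge is forward in \emph{some} direction; a short argument (as in the Remark following Property~\ref{claim: Unique minimal accessible alpha d orientation for a bipartite map}) then gives strong connectivity, so the "in that case, accessible" clause is immediate once feasibility is known. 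For the main equivalence: given a black cut $C=(R,S)$ of weight $w$, I would sum the defining equation $\out(v)=\alpha_{d,k}^-(v)$ over $v\in S$. Because $C$ is black, every cut-set edge meets $S$ at a black vertex, which carries $d$ on that half-edge in a $(d+1)$-fractional orientation; counting flow across the cut and using that $\tau\in S$, $\rho\in R$, one obtains an inequality forcing $w\geq \deg(\tau)=k$, i.e. the trivial cut is minimal, so $(\rm,\tau)$ is a d-trumpet. Conversely, assuming tightness, I would set up the flow network whose feasibility encodes $\alpha_{d,k}^-$-orientations and invoke the integral max-flow/min-cut (Hall/Gale–Hoffman type) criterion: the only potentially violated cut constraints are exactly those corresponding to black cuts, and tightness says none of them is violated; integrality of the optimal flow (all capacities integers) yields an actual $\alpha_{d,k}^-$-orientation.

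\textbf{The white case ($\tau$ white, d-cornet).} This is the more delicate half and I expect it to be the main obstacle, for two reasons: the relevant cuts are \emph{white} cuts, so the flow bookkeeping across a cut now uses that $\tau$ and the incident cut-set vertices carry $1$ rather than $d$ on the white side, and one must be careful that $\alpha_{d,k}^+(\rho)=\alpha_d(\rho)+k$ stays $\leq (d+1)\deg(\rho)$ (which needs $k\leq \deg(\rho)$ — this should follow since any cut separating $\rho$ from $\tau$ has weight $\geq$ something, but one has to check the edge cases, possibly assuming $\deg(\rho)\geq 1$ which is automatic); more importantly, the cornet condition is \emph{strict} tightness, and strictness has to be matched on the orientation side by the \emph{quasi-accessibility} condition rather than plain accessibility. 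The plan: show first that $\alpha_{d,k}^+$-feasibility together with quasi-accessibility implies strict tightness — if $C=(R,S)$ were another minimal white cut (weight $k$), then quasi-accessibility gives a forward path from some vertex of $S\setminus\{\tau\}$ to $\rho\in R$, which must cross the cut-set against its orientation; carefully summing $\out(v)=\alpha_{d,k}^+(v)$ over $S$ and using whiteness of $C$ shows that weight exactly $k$ forces \emph{every} cut-set edge to be saturated from $R$ to $S$, contradicting the existence of a forward edge from $S$ to $R$ across the cut — hence $S$ cannot contain any vertex other than $\tau$, so $C$ is trivial. Conversely, from strict tightness I would again build the flow network, use min-cut to get feasibility (minimality of the trivial cut rules out violated white-cut constraints), and then argue quasi-accessibility: if some $v\neq\tau$ could not reach $\rho$ by a forward path, the set $S$ of vertices that cannot reach $\rho$ (which contains $v$ but not $\rho$, and one checks contains $\tau$) would have no forward edge leaving it; analysing the flow balance on $S$ in the minimal orientation would exhibit $S$ as a white cut of weight $\leq k$, and minimality forces weight $=k$ and $S$ to realise a second minimal white cut distinct from the trivial one, contradicting strictness.

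\textbf{Summary of the hard point and how I would package it.} The genuinely non-routine step is the white/cornet direction, specifically the two-way translation "strict tightness $\leftrightarrow$ quasi-accessibility"; everything else is the now-standard $\alpha$-orientation-as-flow dictionary plus the cut sum computed above. To keep the exposition clean I would isolate the cut-sum computation as a single displayed identity valid for any black (resp. white) cut — something like $\sum_{v\in S}\alpha_{d,k}^{\mp}(v) = (d+1)\,|E(S)| + (\text{flow on cut-set half-edges in } S)$ where $E(S)$ are edges internal to $S$ — and then read off both implications from it by choosing $S$ appropriately (the trivial cut for one direction, an arbitrary monochromatic cut for the other). I do not expect to need the full generality of network flows beyond the integral min-cut theorem, which can be cited.
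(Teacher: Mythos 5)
Your plan follows the paper's proof essentially step for step: the same cut-sum comparison between $\sum_{v\in S}\alpha_{d,k}^{\pm}(v)$ and $(d+1)\vert\mathrm{E}_S\vert$ for one implication, the same auxiliary flow network (capacities $1$ toward black endpoints and $d$ toward white ones, with $\rho$ and $\tau$ as source/sink) combined with max-flow/min-cut for the converse, and the same mechanism whereby strict tightness is traded against quasi-accessibility through strict versions of the cut inequality. The one loose spot is your suggestion that accessibility in the black case is ``almost for free'' from every edge being forward in some direction — the paper instead deduces it from the subset inequality $\sum_{v\in S}\alpha_{d,k}^-(v)>(d+1)\vert\mathrm{E}_S\vert$ inherited from the accessibility of $\alpha_d$-orientations — but that is precisely the cut-sum machinery you already deploy elsewhere, so the approach is the same.
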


\begin{proof}
Assume that $\tau$ is black, and that $\alpha_{d,k}^-$ is feasible on $(\rm,\tau)$. Let $(R,S)$ be a black cut of $\rm$, and let $w\geq 1$ denote its weight. We aim to prove that $w\geq k$.  Let $d_\bullet$ and $d_\circ$ denote the sum of the degrees of the black and white vertices in $R$, respectively. First, note that since $(R,S)$ is black, we have that $d_\circ = d_\bullet+w$. Then, since  $\alpha_{d,k}^-$ is $(d+1)$-fractional, we have: 
\begin{equation}\label{eq:ori_cut}
	\sum_{v\in R}{\alpha_{d,k}^-(v)} \geq \left(d+1\right) \vert\mathrm{E}_R\vert,
\end{equation}
where $ \vert\mathrm{E}_R\vert$ is the number of edges with both endpoints in $R$. Moreover, on one hand, by definition of $\alpha_{d,k}^-$-orientations, we have: 
\[
	\sum_{v\in R}{\alpha_{d,k}^-(v)} = d\, d_\bullet + d_\circ -k.
\]
On the other hand, since $(R,S)$ is black, we have: 
\[
\left(d+1\right) \vert\mathrm{E}_R\vert	=\left(d+1\right) d_\bullet = d\, d_\bullet + d_\circ - w.
\]
By~\eqref{eq:ori_cut}, it follows that $w\geq k$, and hence that $(\rm,\tau)$ is a \dtrum. \\

Reciprocally, assume that $(\rm,\tau)$ is a \dtrum. To define an $\alpha_{d,k}^-$-orientation on $(\rm,\tau)$, we apply the max-flow min-cut theorem to the following representation of $(\rm,\tau)$ as a flow network.
The \emph{flow network} associated to $\rm$, and denoted by $\tilde{\rm}$, is the directed plane map obtained by replacing each edge $e$ of $\rm$ with $2$ directed copies: one toward its black endpoint, denoted by $e_{\circ\rightarrow\bullet}$, and the other toward its white endpoint, denoted by $e_{\bullet\rightarrow\circ}$. Moreover the \emph{capacities} of the edges are set to be: 
\[
\textrm{cap}(e_{\circ\rightarrow\bullet})\coloneqq 1\quad \text{ and } \quad \textrm{cap}(e_{\bullet\rightarrow\circ})\coloneqq d, \text{ for any }e\in \rE(\rm).
\]
Finally, the \emph{source} of $\tilde\rm$ is $\rho$ and its \emph{sink} is $\tau$. Recall that a \emph{flow} is a function $F:\mathrm{E}(\tilde\rm)\rightarrow \mathbb{Z}_{\geq 0}$ that must satisfy the following conditions:
\begin{itemize}
    \item Capacity constraint: $\forall \vec e\in \mathrm{E}(\tilde \rm)$, $F(\vec e) \leq \textrm{cap}(\vec e)$, 
    \item Conservation of flows: $\forall v \in V\setminus \{ \rho, \tau\}, \sum_{\vec{e}\in\mathrm{E}_{v \rightarrow}}{F (\vec{e})} = \sum_{\vec{e}\in\mathrm{E}_{\rightarrow v}}{F(\vec{e})},$
    \item Source/sink constraint: $\sum_{\vec{e}\in\mathrm{E}_{\rightarrow \rho}}{F(\vec{e})} = \sum_{\vec{e}\in\mathrm{E}_{\tau \rightarrow}}{F(\vec{e})}= 0$,
\end{itemize}
where $\mathrm{E}_{v \rightarrow}$ and $\mathrm{E}_{\rightarrow v}$ denote the set of edges in $\tilde\rm$ directed from $v$ and toward $v$, respectively.
Moreover, the value $|F|$ of the flow is defined by: 
\[
|F| \coloneqq  \sum_{\vec{e}\in\mathrm{E}_{\rho \rightarrow}}{F(\vec{e})} = \sum_{\vec{e}\in\mathrm{E}_{\rightarrow \tau}}{F(\vec{e})}.
\]

The \emph{capacity} of a cut $(R,S)$ of $\tilde\rm$ is the sum of the capacity of the edges of its cut-set that are directed toward a vertex in $S$ if the source is in $R$, or toward a vertex in $R$ if the source is in $S$. We write $\underline{\mathrm{cut}}$ for the minimal capacity of the cuts of $\tilde\rm$, and prove that $\underline{\mathrm{cut}} = k$. Observe that if $(R,S)$ is black (resp. white), then its capacity is equal to its weight (resp. to $d$ times its weight).

Since $\tau$ is black, the capacity of the cut defined by the trivial partition $(V\backslash\{\tau\},{\tau})$ is equal to $k$. Then, let $(R,S)$ be any other cut of $\tilde{\rm}$. Either it corresponds to a black cut of $\rm$, and since $\rm$ is tight, it follows that its capacity is at least $k$. Or, there is at least an edge $\vec{e}$ in the cut-set that is directed toward a white vertex in $S$. Since the capacity of $\vec{e}$ is equal to $d\geq k$, the capacity of $(R,S)$ is at least $k$, which implies that $\underline{\mathrm{cut}} = k$. 

The max-flow min-cut theorem then ensures the existence of a flow $F^\star$ with value $k$, that we use to define an explicit orientation $\cO$ on $\rm$. For every edge $e=\left\{h_\bullet,h_\circ\right\}\in\mathrm{E}(\rm)$, we set:
\begin{equation*}
\begin{aligned}
	\cO(h_\circ) &\coloneqq  1 + F^\star(e_{\bullet\rightarrow\circ}) - F^\star(e_{\circ\rightarrow\bullet})\\
	\cO(h_\bullet) &\coloneqq  d - F^\star(e_{\bullet\rightarrow\circ}) + F^\star(e_{\circ\rightarrow\bullet}).
\end{aligned}
\end{equation*}
First, it follows from the \emph{capacity constraints} that $-1\leq F^\star(e_{\bullet\rightarrow\circ}) - F^\star(e_{\circ\rightarrow\bullet})\leq d$, for every edge $e\in\mathrm{E}(\rm)$. Hence, $\cO$ is a well defined $(d+1)$-fractional orientation. Then, it follows from the conservation of flows at each vertex and from $\vert F^\star\vert = k$ that $\out(v)=\alpha_{d,k}^-(v)$, for every $v\in\mathrm{V}(\rm)$. Thus, $\cO$ is an $\alpha_{d,k}^-$-orientation on $(\rm,\rho,\tau)$. \

To conclude, note that $\cO$ is accessible.
Indeed, for any subset $S\subset \mathrm{V}(\rm)\setminus\{\rho\}$, we have:
\begin{equation*}
\sum_{v\in S}{\alpha_{d,k}^-(v)} \geq \sum_{v\in S}{\alpha_d(v)} > \left(d+1\right) \vert\mathrm{E}_S\vert,
\end{equation*}
where the last inequality follows from the accessibility of the $\alpha_d$-orientations on the bipartite map $\rm$.\\

The case of $\alpha_{d,k}^+$-orientation is similar. The only subtlety is to deal with the uniqueness of the minimal cut (for the converse implication) and with the quasi-accessibility (for  the direct implication). For the uniqueness of the minimal cut, using the quasi-accessibility, we can replace~\eqref{eq:ori_cut} by: 
\[
\sum_{v\in S}\alpha_{d,k}^+(v)  > \left(d+1\right) \vert\mathrm{E}_S\vert,
\]
where $(R,S)$ is any white cut different from the trivial one. The uniqueness of the minimal cut follows from the same arguments as in the previous case. 

For the quasi-accessibility, we define the flow network and the associated orientation in the same way, except that we exchange the role of the source (which becomes $\tau$) and of the sink (which becomes $\rho$), while the definition of $\cO$ remains the same. The same arguments as in the previous case show that the capacity of any cut is at least $k$, with equality only for the trivial cut.

Let $(R,S)\neq (\rV(\rm)\backslash\{\tau\},\{\tau\})$ be a black cut, and let $w$ denote its weight. The following holds by the conservation of flows at each vertex and from $\vert F^\star\vert = k$:
\begin{equation*}
	\sum_{v\in S}\alpha_{d,k}^+(v) = \sum_{v\in S}\sum_{h\sim v} \cO(h) = \left(d+1\right) \vert\mathrm{E}_S\vert + w - \vert F^\star\vert.
\end{equation*}
Since, $(\rm,\rho,\tau)$ is a \dtrum, one has $w>\vert F^\star\vert$, so that:
\begin{equation*}
	\sum_{v\in S}\alpha_{d,k}^+(v) > \left(d+1\right) \vert\mathrm{E}_S\vert.
\end{equation*}
The quasi-accessibility follows. 
\end{proof}

\subsubsection{$\alpha_{d,k}^{+/-}$-blossoming trees } \label{sec:alphadktrees}
We extend the definition of $\alpha_{d,k}^{+/-}$ orientation to blossoming trees (with no additional marked vertex). Fix $0\leq k \leq d$ and let $\rt$ be a blossoming tree with root vertex $\rho$, we define, with a slight abuse of notation, $\alpha_{d,k}^-: \mathrm{V}(\rt)\rightarrow \mathbb{Z}_{\geq 0}$ and $\alpha_{d,k}^+ : \mathrm{V}(\rt)\rightarrow \mathbb{Z}_{\geq 0}$ , by: 
\begin{align}\label{eq: alphadk for trees}
\alpha_{d,k}^-(v) &\coloneqq \alpha_{d}(v) - k\,\mathbf{1}_{\left\{v=\rho\right\}}, \\ \alpha_{d,k}^+(v) &\coloneqq \alpha_{d}(v) + k\,\mathbf{1}_{\left\{v=\rho\right\}}.
\end{align}
Then, similarly to Section~\ref{sub: alpha_d-trees}, an \emph{$\alpha_{d,k}^-$-tree} (resp. an \emph{$\alpha_{d,k}^+$-tree}) is defined as a blossoming \emph{bipartite} tree $\rt$ such that $\Delta(\rt)\leq d$, and equipped with an accessible $\alpha_{d,k}^-$-orientation (resp. \emph{$\alpha_{d,k}^+$-orientation}). Note that both $\alpha_{d,k}^-$-trees and $\alpha_{d,k}^+$-trees correspond to $\alpha_d$-trees when $k=0$. 

For simplicity, we exclude trivial trees consisting of a single black vertex with one closing stem, or a single white vertex with one opening stem, from the class of $\alpha_{1,1}^-$- or $\alpha_{1,1}^+$-trees.

\bigskip

We first start with a lemma that ensures that the closure of $\alpha_{d,k}^-$-trees is bipartite:

\begin{lemm}\label{lemm: Opening and Closing stems constraint on alpha_dk-trees}
Fix $d \geq k\geq 1$. In $\alpha_{d,k}^-$-trees of charge $k$ and in $\alpha_{d,k}^+$-trees of charge $-k$, all opening stems are incident to black vertices, and all closing stems to white vertices. 
\end{lemm}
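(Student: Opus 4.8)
\textbf{Proof plan for Lemma~\ref{lemm: Opening and Closing stems constraint on alpha_dk-trees}.} The plan is to adapt the vertex-by-vertex degree-counting argument from the proof of Lemma~\ref{lemm: Opening and Closing stems constraint on alpha_d-trees}, taking care of the extra $\pm k$ shift at the root and of the fact that the tree now has nonzero charge. I will treat the two cases ($\alpha_{d,k}^-$-trees of charge $k$, and $\alpha_{d,k}^+$-trees of charge $-k$) in parallel, since they are mirror images of one another under swapping the roles of black/white and opening/closing stems. Fix such a tree $(\rt,\cO)$.

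First I would handle the white vertices for an $\alpha_{d,k}^-$-tree $\rt$ of charge $k$. If $v\in\rV_\circ(\rt)$ is not the root, then $\out(v)=\alpha_{d,k}^-(v)=\deg(v)\le d < d+1$, and since $\cO(h)=d+1$ for every opening stem $h$, no opening stem can be incident to $v$; if $v$ \emph{is} the (white) root, then $\out(v)=\deg(v)-k\le\deg(v)\le d$, so the same conclusion holds. Next, for black vertices, I repeat the computation from the earlier proof: for $v\in\rV_\bullet(\rt)$ write $k_v,n_{op},n_{cl}$ for the numbers of incident edges, opening stems and closing stems, so $k_v+n_{op}+n_{cl}=\deg(v)$, and expand
\begin{equation*}
\alpha_{d,k}^-(v) = (d+1)\,n_{op} + \sum_{w\text{ child of }v}\cO(h_{\overrightarrow{vw}}) + \cO(h_{\overrightarrow{v\,\mathrm{parent}}})\cdot\mathbf{1}_{\{v\neq\rho\}}.
\end{equation*}
Using accessibility, $\cO(h_{\overrightarrow{wv}})\ge 1$ for each child $w$, hence $\cO(h_{\overrightarrow{vw}})\le d$, and likewise the half-edge toward the parent contributes at most $d+1$. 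Bounding $\alpha_{d,k}^-(v)$ from above this way and using $\alpha_{d,k}^-(v) = d\deg(v) - k\,\mathbf{1}_{\{v=\rho\}}$ together with $\deg(v)\le d$ and $n_{op}=\deg(v)-n_{cl}-k_v$, one obtains an inequality of the form $d\,n_{cl}\le (\text{small})$ forcing $n_{cl}=0$. The delicate point here — and the step I expect to be the main obstacle — is the black root: when $v=\rho$ there is no parent half-edge to worry about, but the $-k$ shift in $\alpha_{d,k}^-(\rho)$ actually \emph{helps}, while for a white root of an $\alpha_{d,k}^-$-tree the argument is unchanged. For the $\alpha_{d,k}^+$-case the shift is $+k$, which goes the ``wrong'' way, and this is where I anticipate needing to use the hypothesis on the charge: the total charge being $-k$ constrains how many closing stems can appear overall, and I expect the cleanest route is to run the same local inequality but keep the $+k\mathbf{1}_{\{v=\rho\}}$ term explicit, then check separately that at the root the degree bound $\deg(\rho)\le d$ still leaves no room for closing stems once one accounts for the charge balance $\rc(\rt)=-k$.

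Concretely, for the black root of an $\alpha_{d,k}^+$-tree of charge $-k$, I would argue that since the tree is bipartite with a black root, its closure (complete closure) is bipartite, and the charge of the root subtree being $-k$ means there are $k$ more opening than closing stems in total; combined with the local computation at $\rho$ — where $\out(\rho)=d\deg(\rho)+k$ and every child half-edge $\cO(h_{\overrightarrow{\rho w}})\le d$ while each opening stem contributes $d+1$ — one gets $d\deg(\rho)+k \le (d+1)n_{op}+d\,k_\rho$, i.e. $d\,n_{cl}\le n_{op}-k \le n_{op}$; but then examining the children and using that each child subtree is an $\alpha_{d,k}^+$- or $\alpha_{d,k'}$-tree of appropriate charge (via a claim analogous to Claim~\ref{claim:dec_alphad}) should close the gap. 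I would finish by invoking the symmetry black$\leftrightarrow$white, opening$\leftrightarrow$closing, $\alpha_{d,k}^-\leftrightarrow\alpha_{d,k}^+$ to deduce the $\alpha_{d,k}^+$ statement from the $\alpha_{d,k}^-$ one (or vice versa), so that only one of the two cases needs to be written in full. The main obstacle, to restate, is making the root computation airtight in the case where the charge shift works against the naive bound; I expect this is resolved by feeding in the global charge constraint and a local decomposition claim rather than by a purely local inequality.
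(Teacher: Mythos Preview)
Your plan has the direction of the difficulty backwards, and this is a genuine gap. For the black root $\rho$ of an $\alpha_{d,k}^-$-tree, the local accessibility bound gives
\[
d\deg(\rho)-k \;=\; \out(\rho)\;\le\;(d+1)n_{op}+d\,k_\rho,
\]
hence only $d\,n_{cl}\le n_{op}+k$ --- the $-k$ shift \emph{weakens} the inequality, it does not help. Conversely, for the black root of an $\alpha_{d,k}^+$-tree the $+k$ shift already gives $d\,n_{cl}\le n_{op}-k\le d-k<d$, forcing $n_{cl}=0$ immediately; no ``gap-closing'' is needed there. So the hard root cases are the black root of an $\alpha_{d,k}^-$-tree and (dually) the white root of an $\alpha_{d,k}^+$-tree, exactly the opposite of what you assert. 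Your proposed black/white, $\pm$ symmetry would transport the hard case to the other hard case, not to an easy one.

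The paper's fix for the hard case is not to sharpen an inequality but to replace it by an \emph{equality}: the children of the root are ordinary planted $\alpha_d$-trees, so Lemma~\ref{lemm: Charge-Excess correspondence} gives $\mathrm{exc}(\rt_j)=\rc(\rt_j)+1$ exactly, and summing yields $\sum_j\mathrm{exc}(\rt_j)=\rc(\rt)-n_{cl}+n_{op}+n$. Feeding this into the defining equation for $\alpha_{d,k}^-(\rho)$ and using $\rc(\rt)=k$ collapses everything to $d\,n_{cl}=n_{cl}$, whence $n_{cl}=0$ (the $d=1$ degenerate case being handled by the exclusion of the trivial one-vertex tree). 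Your plan gestures toward ``feeding in the global charge constraint and a local decomposition claim'', which is the right instinct, but the concrete tool you need is precisely Lemma~\ref{lemm: Charge-Excess correspondence}, not merely an analogue of Claim~\ref{claim:dec_alphad}.
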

\begin{proof}
Let $\rt$ be an $\alpha_{d,k}^-$-tree, with $\rc(\rt)=k$. The same line of arguments as in the proof of Lemma~\ref{lemm: Opening and Closing stems constraint on alpha_d-trees} implies the result for non-root vertices. We hence only focus on the root vertex $\rho$. 

If $\rho$ is white, then $\out(\rho) = \alpha_{d,k}^- (\rho) \leq d-1$, so that no opening stem can be incident to $\rho$. 

If $\rho$ is black, let $n_{cl}$, $n_{op}$ and $n$ be its number of adjacent closing stems, opening stems and edges, respectively, and write $(t_1,\ldots,t_n)$ for the sequence of its white sub-planted trees. It follows from the definition of $\alpha_{d,k}^-$ that for any $1\leq i\leq n$, the planted tree $t_i$ is an planted $\alpha_d$-tree. Thus, by definition of $\alpha_{d,k}^-(\rho)$, we have:
\begin{equation}\label{eq:alphadk_rho}
	d\,(n_{cl}+n_{op}+n)-k = \alpha_{d,k}^-(\rho) = (d+1)\,n_{op} + \sum_{j=1}^n{\big(d+1-\mathrm{exc}(\rt_j)\big)}.
\end{equation}
Moreover, Lemma~\ref{lemm: Charge-Excess correspondence} gives:
\[
\sum_{j=1}^n \mathrm{exc}(\rt_j)=\sum_{j=1}^n(\rc(\rt_j)+1)=\rc(\rt)-n_{cl}+n_{op}+n.
\]
Hence, \eqref{eq:alphadk_rho} can be rewritten as:
\begin{equation*}
	d\,(n_{cl}+n_{op}+n)-k = d\,n_{op} + d\,n  + n_{cl}  - k,
\end{equation*}
which implies that $d\,n_{cl} = n_{cl}$. Then, either $d=1$ or $n_{cl}=0$. Since the first case is excluded by definition, it concludes the proof. \

The case of $\alpha_{d,k}^+$-trees is similar and is left to the reader. 
\end{proof}

The proof of Proposition~\ref{prop:bij_charge0} used verbatim in our setting gives:

\begin{lemm} \label{lemm:treesk}
Fix $d\geq k\geq 1$. Up to forgetting the orientation, the set of $\alpha_{d,k}^-$-trees of charge $k$ corresponds to $\mathcal{T}_k^{(d)}$. Similarly, the set of $\alpha_{d,k}^+$-trees of charge $-k$ corresponds to $\mathcal{T}_{-k}^{(d)}$.
\end{lemm}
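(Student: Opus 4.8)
The plan is to mimic, almost verbatim, the structure used to prove Proposition~\ref{prop:bij_charge0}: set up a decomposition of $\alpha_{d,k}^{-}$-trees (resp. $\alpha_{d,k}^{+}$-trees) at the root, invoke the already-proved correspondence for \emph{planted} trees (Lemma~\ref{lemm: Charge-Excess correspondence}), and check that the charge constraints at the root match those defining $\mathcal{T}_k^{(d)}$ (resp. $\mathcal{T}_{-k}^{(d)}$). The key observation making this a short argument is that an $\alpha_{d,k}^{-}$-orientation of a tree differs from an $\alpha_d$-orientation only at the root vertex, and only by the additive shift $-k$ on $\out(\rho)$; all subtrees hanging from $\rho$ are genuine planted $\alpha_d$-trees, so Lemma~\ref{lemm: Charge-Excess correspondence} applies to them without modification.

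Concretely, I would first treat the $\alpha_{d,k}^{-}$ case. Let $\rt$ be an $\alpha_{d,k}^{-}$-tree of charge $k$, with root $\rho$. By Lemma~\ref{lemm: Opening and Closing stems constraint on alpha_dk-trees}, opening stems sit at black vertices and closing stems at white vertices, so $\rt$ is a bipartite blossoming tree of the right stem-type; I then need to verify the two well-chargedness bullets. For non-root vertices the bullets follow exactly as in the proof of Lemma~\ref{lemm: Charge-Excess correspondence}, since the orientation restricted to any proper subtree is an $\alpha_d$-orientation and Lemma~\ref{lemm: Charge-Excess correspondence} already records that planted $\alpha_d$-trees are planted well-charged trees. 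For the root, I decompose $\rho$ into its $l$ children's planted subtrees $\rt_1,\dots,\rt_l$ (each a planted $\alpha_d$-tree) plus $k-l$ stems, write the out-degree identity $\alpha_{d,k}^{-}(\rho) = \alpha_d(\rho) - k\,\mathbf 1_{\rho\text{ black}}$ (and symmetrically if white), substitute $\mathrm{exc}(\rt_j) = d + \rc(\rt_j)$ for black children / $\mathrm{exc}(\rt_j) = \rc(\rt_j)+1$ for white children from Lemma~\ref{lemm: Charge-Excess correspondence}, and solve for $\rc(\rt) = \sum_j \rc(\rt_j) + (\text{number of root stems of the appropriate type})$. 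This yields exactly the equality $\rc(\rt)=k$ together with the sign conditions on $\rc(\rho)$ that define a well-charged tree (the root vertex is allowed to violate the at-most-one / non-negativity constraint, which is consistent). Conversely, given $\rt \in \mathcal{T}_k^{(d)}$, I orient each child subtree by Lemma~\ref{lemm: Charge-Excess correspondence} and then define $\cO$ on the half-edges at $\rho$ by the same formulas as in the proof of Proposition~\ref{prop:bij_charge0} but with the root half-edge (now absent, since this is a tree, not a planted tree) replaced by the global charge bookkeeping; a short check shows $\out(\rho) = \alpha_{d,k}^{-}(\rho)$ and that $\cO$ is accessible because every child subtree is accessible to $\rho$ and $\rho$ is accessible from each incident edge thanks to $d\ge k$. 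The $\alpha_{d,k}^{+}$ case with charge $-k$ is obtained by swapping the roles of black/white and opening/closing stems and replacing $-k\mathbf 1_{\{\rho\}}$ by $+k\mathbf 1_{\{\rho\}}$; I would simply say ``the same argument applies mutatis mutandis.''

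The only genuine subtlety — and the step I'd flag as the main obstacle — is handling the \emph{degenerate root cases} at $d=k=1$, which is precisely why the excerpt excludes the trivial single-vertex-with-one-stem trees from $\alpha_{1,1}^{\pm}$-trees. In the computation above, when $\rho$ is black with no children, the out-degree identity forces $n_{cl}=0$ only via $d\,n_{cl}=n_{cl}$ as in Lemma~\ref{lemm: Opening and Closing stems constraint on alpha_dk-trees}, and the boundary arithmetic $1\le\mathrm{exc}\le d$ versus $1\le\mathrm{exc}\le d+1$ from Lemma~\ref{lemm: Charge-Excess correspondence} must be tracked carefully so that no off-by-one creeps in when the root is its own planted subtree. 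I would dispatch this by invoking the exclusion convention explicitly and noting that, away from it, the inequalities are strict enough to force uniqueness of the orientation (as always on a tree, since the out-degree function determines the orientation). Everything else is the bookkeeping already carried out in full detail in the proof of Proposition~\ref{prop:bij_charge0}, so the cleanest write-up is: ``The proof of Proposition~\ref{prop:bij_charge0} applies verbatim, with $\alpha_d$ replaced by $\alpha_{d,k}^{\mp}$ and the target charge $0$ replaced by $\pm k$; the only change is the additive shift by $k$ in the out-degree of the root, which propagates to an additive shift by $k$ in the total charge of the tree. The degenerate cases at $d=k=1$ are excluded by convention.''
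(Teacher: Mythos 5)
Your proposal matches the paper exactly: the paper gives no separate argument for this lemma and simply states that the proof of Proposition~\ref{prop:bij_charge0} applies verbatim in this setting, which is precisely the reduction you carry out (decomposition at the root, Lemma~\ref{lemm: Charge-Excess correspondence} for the planted subtrees, the additive shift of $\pm k$ at the root, and the $d=k=1$ exclusion). Your write-up just makes explicit the bookkeeping the paper leaves implicit; the only blemish is a notational collision where you reuse $k$ for both the charge and the root-degree count, but the formulas you derive are correct.
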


\subsubsection{Proof of Theorem~\ref{theo: Bijection well-rooted trees with general charge}}

Fix $d \geq k\geq 1$.
Lemma~\ref{lemm: Opening and Closing stems constraint on alpha_dk-trees} ensures that the closure of $\alpha_{d,k}^-$-trees of charge $k$ and of  $\alpha_{d,k}^+$-trees of charge $-k$  are bipartite. In addition, this closure has $k$ unmatched closing incident to its outer face for $\alpha_{d,k}^-$-trees, and $k$ opening stems incident to its outer face for $\alpha_{d,k}^+$-trees. The complete closure is thus naturally endowed with its minimal $\alpha_{d,k}^-$-orientation or $\alpha_{d,k}^+$-orientation.

Therefore, as a consequence of Lemma~\ref{lemm: Characterization tightness}, the complete closure operation is a one-to-one correspondence between:
\begin{itemize}
\item the set of $\alpha_{d,k}^-$-trees of charge $k$, and
\item the set of \dtrum{s} $(\rm,\tau)$, with $\deg(\tau)=k$, and $\Delta(\rm)\leq d$.
\end{itemize}
Similarly, the complete closure operation is a one-to-one correspondence between:
\begin{itemize}
\item the set of $\alpha_{d,k}^+$-trees of charge $-k$, and
\item the set of \dcorn{s} $(\rm,\tau)$, with $\deg(\tau)=k$, and $\Delta(\rm)\leq d$.
\end{itemize}
Moreover, these bijections preserve the vertex degree distribution and the color of the root vertex. Theorem~\ref{theo: Bijection well-rooted trees with general charge} then follows from Lemma~\ref{lemm:treesk}.

Note that, similarly to the case of $\alpha_d$-orientations, we first established a bijection between the sets of \dcorn s and \dtrum s with vertex degree at most $d$  for each $d\geq 1$. Then, we proved that these bijections coincide on their respective domains, as they put these maps in correspondence with well-charged trees, the definition of which is independent of $d$. In conclusion, we obtain a consistent bijection between the entire set of \dcorn~ and \dtrum~ and a set of trees, with no degree constraint.

\begin{rema}
Note that we could also prove directly a property analogous to Property~\ref{claim:stability for orientations}, to describe how minimal $\alpha_{d,k}^-$-orientations (resp. $\alpha_{d,k}^+$-orientations) can be deduced one from another for different values of $d$. Similarly to ~\ref{rem:infinite_bijection}, this ensures that the collections of $\alpha_{d,k}^-$-trees (resp. $\alpha_{d,k}^+$-trees) associated to the same trumpet (resp. cornet) only differ through an additive shift on their orientation, without relying on the correspondence with well-charged trees.
\end{rema}

\section{Enumerative consequences}\label{sec4}

In this section, we provide a new derivation of the enumeration of bipartite plane maps with prescribed vertex degrees, based on Theorem~\ref{theo: Bijection Bipartite maps}. We begin by recalling some results of~\cite{BousquetMelouSchaeffer_Bipartite} concerning the enumeration of well-charged planted trees. We then derive enumerative results for bipartite plane maps, \dtrum s and \dcorn s, and for doubly rooted bipartite maps.

In this section, let $u$ and $\xi$ be two formal variables, and let $\ux\coloneqq \left(x_k\right)_{k\geq 0}$ and $\uy\coloneqq \left(y_k\right)_{y\geq 0}$ denote two families of formal variables.

	\subsection{Well-charged trees}\label{sec: trees gf}
The weight $w^\mathrm{tree}(\rt)$ of a planted well-charged tree $\rt$ is defined by: 
\begin{equation}\label{eq:weight_well_charged_tree}
	w^\mathrm{tree}(\rt)=u^{\#\{\text{opening stems}\}}\prod_{ v\in \rV_\circ(\rt)}x_{\deg(v)}\prod_{v\in \rV_\bullet(\rt)}y_{\deg(v)}.
\end{equation}
Then, for any $k\in\Z$, let $B_k(\ux,\uy, u)$ and $W_k(\ux,\uy, u)$ in $\Q\llbracket\ux,\uy,u \rrbracket$ denote the weighted generating series of well-charged planted trees of charge $k$, rooted at a black or a white vertex, respectively.

We further define the following generating (Laurent) series of well-charged planted trees,  which are elements of $\Q\llbracket\ux,\uy,u, \xi, 1/\xi \rrbracket$:
\begin{equation}
B(u, \xi) \coloneqq  B(\ux,\uy, u, \xi) \coloneqq \sum_{k\leq 1}B_k(\ux,\uy; u) \,\xi^k,
\end{equation}
and,
\begin{equation}
 W(u, \xi) \coloneqq  W(\ux,\uy, u, \xi) \coloneqq \sum_{k\geq 0} W_k(\ux,\uy; u)\, \xi^k.
\end{equation}

Following~\cite{BousquetMelouSchaeffer_Bipartite}, by decomposing a planted well-charged tree at its root, we obtain the recursive relations satisfied by these generating series:
\begin{equation}\label{eq: Recursive eq GF black blossoming trees}
	B(u, \xi) = \left[\xi^{\leq 1}\right] \sum\limits_{l\geq 0}{y_{l+1} \,\big(u\xi^{-1}+W(u, \xi)\big)^l},
\end{equation}
\begin{equation}\label{eq: Recursive eq GF white blossoming trees}
	 W(u, \xi) = \left[\xi^{\geq 0}\right] \sum\limits_{l\geq 0}{x_{l+1}\,(\xi+B(u, \xi))^l},
\end{equation}  
where the formal operators $\left[\xi^{\leq p}\right]$ and $\left[\xi^{\geq p}\right]$ extract the terms of the formal power series for which the exponent of $\xi$ is at most $p$ and at least $p$, respectively. \\

	\subsection{Bipartite maps, \dtrum s and \dcorn s} 

		\subsubsection{Enumeration of bipartite maps}\label{sec: bipartite gf}

For any $\rm \in \mathcal{M}$ and $\bar \rm\in \bar{\mathcal{M}}$, we define their weight as follows:
\begin{align}
	w(\rm)&=u^{\#\mathrm{F}(\rm)}\prod_{v\in \rV_\circ(\rm)}x_{\deg(v)}\prod_{v\in \rV_\bullet(\rm)}y_{\deg(v)},\label{eq:weightPlanarBipartite}\\
	\bar w(\bar \rm)&=u^{\#\mathrm{F}(\bar\rm)-1}\prod_{v\in \rV_\circ(\bar \rm)}x_{\deg(v)}\prod_{v\in \rV_\bullet(\bar\rm)}y_{\deg(v)}   =u^{-1}\,w(\bar\rm).\label{eq:weightPlaneBipartite}
\end{align}
Then $M_\circ(u)\equiv M_\circ(\ux,\uy, u)\in\Q\llbracket\ux,\uy,u \rrbracket$ (resp.  $\bar{M}_\circ(u)\equiv \bar{M}_\circ(\ux,\uy, u)\in\Q\llbracket\ux,\uy,u \rrbracket$) denotes the weighted generating function of white-rooted bipartite planar (resp. plane) maps. It follows directly from the dictionary for generating series \cite[ch.1]{FlajoletSedgewick09}, that 
\begin{equation}\label{eq:dMbar}
    \bar{M}_\circ(u) := \partial_u M_\circ(u).
\end{equation}
Then, Theorem~\ref{theo: Bijection Bipartite maps} implies the following equality of formal power series in  $\Q\llbracket\ux,\uy,u \rrbracket$:
\begin{prop}\label{prop:genSeries_bipartite_wellcharged}
The generating series $\bar M_\circ(u)$ of white-rooted plane bipartite maps satisfies: 
\begin{equation}\label{eq: Mbar(u) in terms of B(u,xi)}
	\bar{M}_\circ(u) = \left[\xi^0\right] \sum\limits_{l\geq 0} x_l \,(\xi + B(u,\xi))^{l}.
\end{equation}
\end{prop}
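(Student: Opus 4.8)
The plan is to establish Proposition~\ref{prop:genSeries_bipartite_wellcharged} as a direct translation of the bijection from Theorem~\ref{theo: Bijection Bipartite maps} into generating series. The key point is that Theorem~\ref{theo: Bijection Bipartite maps} identifies $\bar{\mathcal{M}}$ with $\mathcal{T}_0$ in a way that preserves the vertex-degree distribution and the color of the root vertex. Since the weight $\bar{w}$ of a plane map and the weight $w^{\mathrm{tree}}$ of a well-charged tree are both multiplicative over vertex degrees, with the number of opening stems playing the role of the variable tracked by $u$ (here I must recall that in the closure the number of internal faces minus one equals the number of opening stems of the corresponding tree, this being exactly how $u$ is carried through the closure — the closure creates one edge per matched opening stem, so the number of bounded faces of the closure is the number of matched pairs, and in charge $0$ all opening stems are matched), the generating series of white-rooted plane bipartite maps equals the generating series of white-rooted well-charged trees of charge $0$, weighted by $u$ to the number of opening stems.

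First, I would make explicit the weight-matching. On the map side, $\bar{w}(\bar{\rm}) = u^{\#\mathrm{F}(\bar\rm)-1}\prod_{v\in\rV_\circ}x_{\deg v}\prod_{v\in\rV_\bullet}y_{\deg v}$; by Euler's formula and the fact that closure preserves the number of edges that are not tree edges (which equals the number of matched opening stems, hence all opening stems when the charge is $0$), one gets $\#\mathrm{F}(\bar\rm)-1 = \#\{\text{opening stems of }\rt_{\bar\rm}\}$. Combined with degree preservation, this gives $\bar{w}(\bar\rm) = w^{\mathrm{tree}}(\rt_{\bar\rm})$ once we interpret a (non-planted) white-rooted well-charged tree of charge $0$ appropriately. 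Second, I would relate a white-rooted well-charged tree of charge $0$ to its decomposition at the root vertex $\rho$: if $\rho$ has degree $l$, it is incident to some black planted subtrees and some closing stems; writing $\xi$ as the formal variable conjugate to charge as in Section~\ref{sec: trees gf}, a black planted subtree contributes $B(u,\xi)$ and a closing stem contributes $\xi$ (it raises the charge by $1$), and the constraint that the total charge be $0$ is enforced by the operator $[\xi^0]$. Summing over $l\geq 0$ and the choices of which of the $l$ slots are subtrees versus closing stems gives precisely $[\xi^0]\sum_{l\geq 0}x_l(\xi+B(u,\xi))^l$.

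The one subtlety — and the step I expect to require the most care — is the bookkeeping of the operator $[\xi^{\geq 0}]$ versus $[\xi^0]$ and the handling of the root corner: since $\bar{\mathcal{M}}$ consists of rooted plane maps, the root corner is at $\rho$ and I should check that the decomposition at $\rho$ (reading the children in the cyclic order starting from the root corner) matches the decomposition used in~\eqref{eq: Recursive eq GF white blossoming trees} for $W(u,\xi)$, which extracts $[\xi^{\geq 0}]$ rather than $[\xi^0]$. The difference is exactly that a well-charged tree of total charge $0$ has the root-charge constraint relaxed from ``$\geq 0$'' (as for an internal planted white vertex) to ``$=0$'' (global charge must vanish), which is why the proposition has $[\xi^0]$ in place of the $[\xi^{\geq 0}]$ appearing in the recursion for $W$; I would spell this out carefully. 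One also checks there is no overcounting from symmetry since all maps are rooted. Finally, I would remark that~\eqref{eq: Mbar(u) in terms of B(u,xi)} is the ``white'' analogue of~\eqref{eq: Recursive eq GF white blossoming trees} with $x_{l+1}$ replaced by $x_l$ (the root white vertex has degree $l$ rather than being a child occupying one slot of its parent), making the formula a transparent consequence of the bijection together with the tree decomposition already recorded in Section~\ref{sec: trees gf}.
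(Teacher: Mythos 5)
Your proposal is correct and takes essentially the same route as the paper's proof: invoke the bijection of Theorem~\ref{theo: Bijection Bipartite maps}, check that the weights \eqref{eq:weight_well_charged_tree} and \eqref{eq:weightPlaneBipartite} correspond under the closure (your Euler-formula identification of $\#\mathrm{F}(\bar\rm)-1$ with the number of opening stems is exactly the verification the paper leaves implicit), and decompose the white-rooted charge-$0$ tree at its root vertex into closing stems (contributing $\xi$) and black planted subtrees (contributing $B(u,\xi)$), the global charge-$0$ constraint yielding the $\left[\xi^0\right]$ extraction. Your additional discussion of $\left[\xi^0\right]$ versus $\left[\xi^{\geq 0}\right]$ is a correct and worthwhile clarification rather than a deviation.
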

\begin{proof}
	This is a direct consequence of Theorem~\ref{theo: Bijection Bipartite maps}. Indeed, the definitions of weights given in~\eqref{eq:weight_well_charged_tree} and~\eqref{eq:weightPlaneBipartite} ensure that the bijection between white-rooted bipartite plane maps and white-rooted well-charged trees of charge 0 preserves their weights. 
	The decomposition of these trees at their root vertex into opening stems and black-planted well-charged trees concludes the proof.
\end{proof}		
		
			\subsubsection{Enumeration of \dtrum s and \dcorn s}				
Fix $k\in\mathbb{Z}_{>0}$. Let $M^\mathsf{t}_{\bullet,k}$ and $M^\mathsf{t}_{\circ,k}$ be respectively the weighted generating function of black-rooted and white-rooted \dtrum s, with a marked vertex of degree $k$. Similarly, let $M^\mathsf{c}_{\bullet,k}$ and $M^\mathsf{c}_{\circ,k}$ be respectively the weighted generating function of black-rooted and white-rooted \dcorn s $(\rm,\tau)$, with a marked vertex of degree $k$. 

The weight of a \dtrum~or a \dcorn~$(\rm,\tau)$ is the same as the weight for bipartite planar maps -- defined in~\eqref{eq:weightPlanarBipartite} -- except that no weight is assigned to the marked vertex $\tau$. Then, it follows  from Theorem~\ref{theo: Bijection well-rooted trees with general charge} that: 
\begin{prop}
For any $k>0$,  we have the following equalities of generating series in $\Q\llbracket\ux,\uy,u \rrbracket$:
\begin{align*}
	M^\mathsf{t}_{\bullet,k} &= u^k \left[\xi^k\right] \sum\limits_{l\geq 1} y_l \,(u\xi^{-1} + W(u,\xi))^{l} , &
	M^\mathsf{c}_{\bullet,k} &= \left[\xi^{-k}\right] \sum\limits_{l\geq 1} y_l \,(u\xi^{-1} + W(u,\xi))^{l} ,\\
	M^\mathsf{t}_{\circ,k} &= u^k \left[\xi^k\right] \sum\limits_{l\geq 1} x_l \,(\xi + B(u,\xi))^{l} , &
	M^\mathsf{c}_{\circ,k} &= \left[\xi^{-k}\right] \sum\limits_{l\geq 1} x_l \,(\xi + B(u,\xi))^{l}.
\end{align*}
\end{prop}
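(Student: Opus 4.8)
The plan is to deduce each of the four identities directly from the complete-closure bijection of Theorem~\ref{theo: Bijection well-rooted trees with general charge}, exactly as Proposition~\ref{prop:genSeries_bipartite_wellcharged} is deduced from Theorem~\ref{theo: Bijection Bipartite maps}. First I would fix $k>0$ and recall that Theorem~\ref{theo: Bijection well-rooted trees with general charge} gives, via the complete closure, a weight-preserving bijection between $\mathcal{T}_{-k}$ and \dcorn s $(\rm,\tau)$ with $\deg(\tau)=k$, and between $\mathcal{T}_k$ and \dtrum s $(\rm,\tau)$ with $\deg(\tau)=k$, both preserving vertex-degree distribution and the colour of the root. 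I must first check that the weights match up: the weight of a well-charged tree in~\eqref{eq:weight_well_charged_tree} carries $u^{\#\{\text{opening stems}\}}$, and under the complete closure every matched opening stem creates an internal edge hence an internal face, while the $k$ unmatched stems become edges incident to $\tau$; I would check that the number of faces of $(\rm,\tau)$ not counting the outer face equals the number of opening stems of the tree when the charge is $-k$ (the cornet case), and equals that number plus $k$ when the charge is $k$ (the trumpet case), which accounts for the prefactor $u^k$ appearing for \dtrum s but not for \dcorn s. Since the marked vertex $\tau$ receives no weight and the closure preserves the degrees of all other vertices, the tree weight and the map weight then agree up to this explicit power of $u$.

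Next I would translate the generating-series statement into the decomposition of well-charged trees at their root. For a black-rooted \dtrum~with $\deg(\tau)=k$, the corresponding tree lies in $\mathcal{T}_k$ and is rooted at a black vertex $\rho$ of some degree $l\geq 1$; decomposing $\rho$ into its incident half-edges, each is either an opening stem (contributing a factor $u$, recorded as $u\xi^{-1}$ in the bookkeeping variable $\xi$ that tracks charge) or the half-edge of an edge leading to a white planted subtree (contributing $W(u,\xi)$). Since the total charge of the tree must be $k$, the generating series of such trees is $[\xi^k]\sum_{l\geq 1} y_l\,(u\xi^{-1}+W(u,\xi))^l$, and multiplying by the extra $u^k$ from the weight discrepancy above yields $M^{\mathsf t}_{\bullet,k}$. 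The same decomposition with charge $-k$ (no extra $u$-power since cornets have no face-count shift) gives $M^{\mathsf c}_{\bullet,k}=[\xi^{-k}]\sum_{l\geq1}y_l(u\xi^{-1}+W(u,\xi))^l$. The white-rooted cases are symmetric: a white root of degree $l$ decomposes into closing stems (each contributing a factor $\xi$ in the charge variable, no $u$) and edges to black planted subtrees (contributing $B(u,\xi)$), giving $M^{\mathsf t}_{\circ,k}=u^k[\xi^k]\sum_{l\geq1}x_l(\xi+B(u,\xi))^l$ and $M^{\mathsf c}_{\circ,k}=[\xi^{-k}]\sum_{l\geq1}x_l(\xi+B(u,\xi))^l$. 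Here I would invoke Lemma~\ref{lemm: Opening and Closing stems constraint on alpha_dk-trees} and the analysis in the proof of Lemma~\ref{lemm: Charge-Excess correspondence} to justify that around a black (resp. white) root only opening (resp. closing) stems occur, so that the decomposition into $B$ and $W$ is exactly as stated.

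The main subtlety — and the step I expect to require the most care — is the bookkeeping of the variable $\xi$ and of the $u$-exponent. One must be sure that the charge of the whole tree, read off as the exponent of $\xi$ after the root decomposition, is genuinely the difference between closing and opening stems of the full tree (not just of the root), which follows because $B(u,\xi)$ and $W(u,\xi)$ are themselves defined as the charge-graded series of planted subtrees, so the charge is additive over the decomposition. Likewise the shift in the power of $u$ between the trumpet and cornet cases must be pinned down precisely: in the trumpet case the $k$ edges joining the tree to $\tau$ are created from unmatched closing stems, and the region between these edges and the outer face is subdivided into exactly $k$ internal faces by these new edges together with $\tau$, whereas in the cornet case no such extra faces appear — this is the content of the $u^k$ prefactor, and is the one place where a small Euler-characteristic computation (or a direct inspection of Figure~\ref{fig: complete closure and corner}) is needed. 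Once these two normalisations are settled, the four identities are immediate consequences of Theorem~\ref{theo: Bijection well-rooted trees with general charge} together with the root decomposition, in complete parallel with Proposition~\ref{prop:genSeries_bipartite_wellcharged}.
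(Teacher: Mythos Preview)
Your proposal is correct and follows essentially the same approach as the paper: apply Theorem~\ref{theo: Bijection well-rooted trees with general charge}, decompose the well-charged tree at its root vertex (yielding the sums over $l$ with $B$ or $W$), and account for the face count via the $u^k$ prefactor in the trumpet case. The paper's own proof is terser on the root decomposition (it takes the formulas as parallel to Proposition~\ref{prop:genSeries_bipartite_wellcharged}) and phrases the $u^k$ bookkeeping simply as ``the number of faces equals the number of closing stems (resp.\ opening stems) in the trumpet (resp.\ cornet) case, so there are $k$ more faces than opening stems for trumpets'', which is cleaner than your Euler-characteristic sketch but amounts to the same count.
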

\begin{proof}
Since no weight is assigned to the pointed vertex of a trumpet or a cornet, the only subtlety is to keep track of the weights of the faces, when performing the closure of a well-charged tree. 

In the full closure of a well-charged tree with a negative (resp. positive) charge, the number of faces is equal to the number of opening (resp. closing) stems. Since the power of $u$ in the weight of a tree accounts for its number of opening stems, the bijection between cornets and well-charged trees is weight-preserving, and the enumerative formulas for $M^\mathsf{c}_{\bullet,k}$ and $M^\mathsf{c}_{\circ,k}$ follow. For trumpets, the term $u^k$ in the formulas accounts for the fact that there are $k$ more closing stems than opening stems in a well-charged tree with charge $k$.
\end{proof}

	\subsection{Doubly rooted bipartite maps} 

In this paragraph we derive a decomposition of doubly rooted bipartite planar maps, into pairs consisting of a \dtrum~and a \dcorn. In the dual setting, this corresponds precisely to the decomposition of hypermaps with two monochromatic boundaries into pairs consisting of a trumpet and a cornet, as described in~\cite[Section~3.3]{AlbenqueBouttier_slices}. In particular, this leads to recover their enumerative results through a new bijective approach. 

We define a \emph{doubly rooted} bipartite planar map as a triple $(\rm, \rho_1, \rho_2)$ consisting of a bipartite planar map $\rm$ with two marked corners, incident to two different vertices denoted by $\rho_1$ and $\rho_2$, respectively. In this setting, a cut $(R,S)$ is defined as a partition of $\rV(\rm)$, such that $\rho_1\in R$ and $\rho_2\in S$. As before, a cut is said to be \emph{black} (resp. \emph{white}) if all the vertices of $S$ incident to the cut-set are black (resp. white).

Note, that a black cut might not exist. The only case in which it can happen is if $\rho_1$ is black, $\rho_2$ is white and $\rho_1$ and $\rho_2$ are adjacent, see also Corollary~\ref{coro:doubly_rooted} and ~\cite[Lemma~4.6]{AlbenqueBouttier_slices}.

\begin{figure}[t]
  	\centering
  	\includegraphics[width=0.514\linewidth ,page=2]{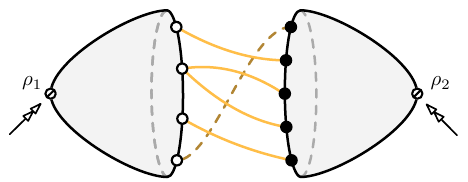}\qquad
  	\includegraphics[width=0.4\linewidth ,page=1]{DoublyRooted.pdf}
 	\caption{\label{fig: DoublyRooted} A \dtrum~ $(\rm_1,\tau_1)$ (left), a \dcorn~ $(\rm_2,\tau_2)$ (middle), and one of their gluing into a doubly rooted map $(\rm,\rho_1,\rho_2)$ (right).}
\end{figure}

The following correspondence between doubly rooted maps, \dtrum s and \dcorn s is illustrated in Figure~\ref{fig: DoublyRooted}.
\begin{prop} \label{prop:ktoone}
Let $k\geq 1$. There exists a $k$-to-one correspondence between:
\begin{itemize}
	\item the set of doubly rooted bipartite planar maps with minimal black-cut weight $k$, and
	\item the set of pairs consisting of
	\begin{enumerate}
		\item[1)] a \dtrum~$(\rm_1,\tau_1)$ with $\deg(\tau_1)=k$, and
		\item[2)] a \dcorn~$(\rm_2,\tau_2)$ with $\deg(\tau_2)=k$.
\end{enumerate}	 
\end{itemize}
\end{prop}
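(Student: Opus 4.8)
The plan is to construct the correspondence by \emph{cutting} a doubly rooted map along a minimal black cut, and conversely by \emph{gluing} a \dtrum{} and a \dcorn{} along their marked vertices. Let me describe both directions.

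Starting from a doubly rooted bipartite map $(\rm,\rho_1,\rho_2)$ whose minimal black cut has weight $k$, I would first argue that, although there may be several minimal black cuts $(R,S)$, they can be partially ordered (say by inclusion of the $R$-part) and that there is a \emph{canonical} one — for instance the one maximizing $R$, which exists and is unique by a standard submodularity/uncrossing argument on cut-sets (if $(R_1,S_1)$ and $(R_2,S_2)$ are both minimal black cuts then so is $(R_1\cup R_2, S_1\cap S_2)$). Cutting the map along the cut-set of this canonical minimal cut, I obtain two pieces: the ``$R$-side'' carries $\rho_1$ as its root and acquires a new marked vertex $\tau_1$ of degree $k$ obtained by contracting the $S$-side to a point (this is well-defined in the plane: the cut-set edges, read in cyclic order around the boundary, attach to $\tau_1$); and symmetrically the ``$S$-side'' carries $\rho_2$ and a new marked vertex $\tau_2$ of degree $k$. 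By the defining property of the cut, all endpoints of cut-set edges on the $S$-side are black, so on the $R$-side piece $\tau_1$ is black and on the $S$-side piece $\tau_2$ is white. I then need to check that $(\rm_1,\tau_1)$ is \emph{tight} — i.e. $\deg(\tau_1)=k$ is the minimal black-cut weight of $(\rm_1,\tau_1)$ — which follows because any black cut of $(\rm_1,\tau_1)$ pulls back to a black cut of $(\rm,\rho_1,\rho_2)$ of the same weight; and that $(\rm_2,\tau_2)$ is \emph{strictly tight} — the uniqueness of the minimal white cut in $(\rm_2,\tau_2)$ reflecting the canonical choice made on the $\rm$ side. The factor $k$ in ``$k$-to-one'' then appears naturally: the gluing data also includes how the two boundaries are matched cyclically, and there are exactly $k$ rotations, but only one of them is recorded by the marked corners $\rho_1,\rho_2$ — equivalently, reconstructing $(\rm,\rho_1,\rho_2)$ from $(\rm_1,\tau_1)$ and $(\rm_2,\tau_2)$ requires choosing one of $k$ cyclic identifications of the $k$ half-edges at $\tau_1$ with the $k$ half-edges at $\tau_2$.

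For the reverse map, given a \dtrum{} $(\rm_1,\tau_1)$ with $\deg(\tau_1)=k$ and a \dcorn{} $(\rm_2,\tau_2)$ with $\deg(\tau_2)=k$, I delete $\tau_1$ and $\tau_2$ (leaving $k$ dangling half-edges on each boundary), and glue the two boundary circles by one of the $k$ possible cyclic matchings, merging corresponding half-edges into $k$ new edges; the roots of $\rm_1$ and $\rm_2$ become $\rho_1$ and $\rho_2$. Since $\tau_1$ was black and $\tau_2$ white, each new edge joins a black vertex (from the $\rm_1$ side, the former neighbours of $\tau_1$) to a white vertex (from the $\rm_2$ side), so the result is bipartite, and the cut separating the $\rm_1$-vertices from the $\rm_2$-vertices is black of weight $k$. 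I must verify that this is the \emph{minimal} black cut: tightness of $(\rm_1,\tau_1)$ prevents a lighter black cut contained in the $\rm_1$-side, and strict tightness of $(\rm_2,\tau_2)$ does the same on the $\rm_2$-side and moreover guarantees that the reconstructed minimal cut is the canonical one, so that cutting-then-gluing is the identity. This establishes that the two constructions are mutually inverse up to the $k$-fold choice of cyclic matching, giving the claimed $k$-to-one correspondence.

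The main obstacle, I expect, will be the careful bookkeeping around \emph{uniqueness} of the canonical minimal black cut and its interaction with the \emph{strict} tightness required of the cornet: one needs to show precisely that the asymmetry in the definitions (tight for the black-pointed \dtrum{}, strictly tight for the white-pointed \dcorn{}) is exactly what makes the cut/glue operations inverse to each other, rather than merely the ``right'' level of generality by analogy with \cite[Section~3.3]{AlbenqueBouttier_slices}. A secondary subtlety is keeping the planar (cyclic) structure consistent through the cut — making sure that the $k$ cut-set edges inherit a well-defined cyclic order so that contracting $S$ to a single vertex $\tau_1$ yields a genuine plane map with $\tau_1$ of degree exactly $k$, and that no two cut-set edges become parallel in a way that changes the face structure. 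Once these points are settled, the enumerative consequence — expressing the generating series of doubly rooted maps in terms of products $M^{\mathsf t}_{\bullet/\circ,k}\,M^{\mathsf c}_{\bullet/\circ,k}$ — will follow by summing over $k$ with the $1/k$ factor, matching \cite[Section~3.3]{AlbenqueBouttier_slices}.
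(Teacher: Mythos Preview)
Your proposal is correct and follows essentially the same route as the paper's proof. Both arguments cut along a canonical minimal black cut and glue back with a choice among $k$ cyclic matchings; the paper makes exactly your ``maximise $R$'' choice, phrased dually as $S_{\min}\coloneqq\bigcap_{(R,S)\in\mathcal C_\bullet} S$, and relies on the same uncrossing observation $(R_1\cup R_2,\,S_1\cap S_2)\in\mathcal C_\bullet$ to show this intersection is itself a minimal black cut. Your anticipated ``main obstacle''---that the asymmetry tight/strictly tight is precisely what makes the canonical choice invertible---is indeed the crux, and your sketch of how strict tightness of the cornet reflects the minimality of $S_{\min}$ is exactly the mechanism at work; the paper is terse here and simply asserts that the required properties ``follow directly from the definition of $C_{\min}$''. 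Your secondary subtlety about the planar cyclic structure is also real; the paper addresses it only implicitly, by observing that both sides of the minimal cut are connected and then describing the pieces as blossoming maps whose complete closure produces $\tau_1,\tau_2$.
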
 
\begin{proof}
Let $(\rm, \rho_1, \rho_2)$  be a doubly rooted bipartite planar map. We denote by $\mathcal{C}_{\bullet}=(R,S)$ the set of all its black cuts with {minimal weight}. We define:
\begin{equation}\label{eq: minimal black cut}
	  S_\mathrm{min}  \coloneqq \bigcap_{(R,S)\in\mathcal{C}_{\bullet}}{S}, \qquad R_\mathrm{min}\coloneqq\mathrm{V}(\rm)\setminus S_\mathrm{min}.
\end{equation}
Note that $(R_1\cup R_2,S_1\cap S_2)\in\mathcal{C}_\bullet$, for any $(R_1,S_1), (R_2,S_2) \in \mathcal{C}_\bullet$. Hence, $(R_\mathrm{min}, S_\mathrm{min})\in\mathcal{C}_\bullet$. We refer to $(R_\mathrm{min}, S_\mathrm{min})$ as the {minimal black cut} of $\rm$ and we denote its (minimal) weight by $k$.

For $A\subset \rV(\rm)$, we denote by $\rE[A]$ the subset of edges of $\rm$ with both extremities in $A$. Then, since $C_{\mathrm{min}}$ has minimal weight, it implies that $(R_\mathrm{min}, \rE[R_\mathrm{min}])$ and $(S_\mathrm{min},\rE[S_\mathrm{min}])$ are connected. For each edge $e$ in the cut set of $C_{\mathrm{min}}$, we cut $e$ into two half-edges $e_r$ and $e_s$, respectively incident to a vertex of $R_{\mathrm{min}}$ and of $S_{\mathrm{min}}$. We obtain two blossoming maps, one rooted at $\rho_1$, and the other one rooted at $\rho_2$. We define $\rm_1$ and $\rm_2$ as the complete closure of these maps, i.e. we add two vertices $\tau_1$ and $\tau_2$ and connect all the half-edges incident to a vertex of $R_{\mathrm{min}}$ (resp. of $S_{\mathrm{min}}$) to $\tau_1$ (resp. to $\tau_2$). The fact that $\rm_1$ and $\rm_2$ satisfy the conditions given in the proposition follows directly from the definition of $C_{\mathrm{min}}$.

Reciprocally, to reconstruct $\rm$ from the pair $(\tilde\rm_1,\tilde\rm_2)$, we first delete $\tau_1$ and $\tau_2$ and transform each edge incident to them into a closing stem (respectively an opening stem). There are exactly $k$ ways to match the $k$ closing stems with the $k$ opening stems, while ensuring that the resulting map is planar, which concludes the proof.
\end{proof}

This result immediately gives the following enumerative corollary: 
\begin{coro}[see also Corollary 4.2 of \cite{AlbenqueBouttier_slices}]\label{coro:doubly_rooted}
Write $M_{\circ,\circ}$, $M_{\circ,\bullet}$ and $M_{\bullet,\bullet}$ for the weighted generating series of doubly rooted bipartite maps, where both root vertices are white (resp. one white and one black, resp. two black). Then,
\begin{align*}
    M_{\circ,\circ}(\ux,\uy, u) &= \sum_{k >0} k \, u^{-k} \, M^\mathsf{t}_{\circ,k} (\ux,\uy, u) \, M^\mathsf{c}_{\circ,k} (\ux,\uy, u),\\
    M_{\circ,\bullet}(\ux,\uy, u) &= \sum_{k >0} k \, u^{-k} \, M^\mathsf{t}_{\circ,k} (\ux,\uy, u) \, M^\mathsf{c}_{\bullet,k} (\ux,\uy, u),\\
    M_{\bullet,\bullet}(\ux,\uy, u) &= \sum_{k >0} k \, u^{-k} \, M^\mathsf{t}_{\bullet,k} (\ux,\uy, u) \, M^\mathsf{c}_{\bullet,k} (\ux,\uy, u).
\end{align*}
\end{coro}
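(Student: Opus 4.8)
The plan is to derive Corollary~\ref{coro:doubly_rooted} directly from the $k$-to-one correspondence of Proposition~\ref{prop:ktoone}, by translating it into an identity of weighted generating series. The main point is to check that the gluing/cutting operation of Proposition~\ref{prop:ktoone} is weight-compatible up to a correction factor of $u^{-k}$, and then to sum over $k$ and over the color of the two root vertices.

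First I would fix the weight conventions. Recall that the weight $w(\rm)$ of a bipartite planar map in~\eqref{eq:weightPlanarBipartite} records $u^{\#\mathrm{F}(\rm)}$ together with the vertex-degree monomials, and that for a \dtrum{} or a \dcorn{} $(\rm,\tau)$ the weight is the same except that no weight is assigned to $\tau$. For a doubly rooted map $(\rm,\rho_1,\rho_2)$ I use the same weight $w(\rm)$ as in~\eqref{eq:weightPlanarBipartite}, assigning vertex weights to both $\rho_1$ and $\rho_2$ (this is implicit in the statement, since $M_{\circ,\circ}$, etc., are ``weighted generating series of doubly rooted bipartite maps''; I would state it explicitly for clarity). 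Now consider the correspondence of Proposition~\ref{prop:ktoone} applied to a doubly rooted map $(\rm,\rho_1,\rho_2)$ whose minimal black-cut weight is $k$: it is glued from a \dtrum{} $(\rm_1,\tau_1)$ and a \dcorn{} $(\rm_2,\tau_2)$ with $\deg(\tau_1)=\deg(\tau_2)=k$, and there are exactly $k$ doubly rooted maps producing the same pair (the $k$ planar matchings of the $k$ stems). I would check that under this gluing the vertex-degree monomials multiply correctly: the vertices of $R_{\mathrm{min}}$ become the non-$\tau_1$ vertices of $\rm_1$ (with the same degrees, since each cut edge contributes a half-edge/stem at its $R_{\mathrm{min}}$-endpoint just as it did in $\rm$), likewise for $S_{\mathrm{min}}$ and $\rm_2$, and $\tau_1,\tau_2$ carry no weight; so $\prod_v x_{\deg v}\prod_v y_{\deg v}$ for $(\rm,\rho_1,\rho_2)$ equals the product of the corresponding monomials for $\rm_1$ and $\rm_2$. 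The one subtlety is the face count: gluing $\rm_1$ and $\rm_2$ along $k$ edge-pairs (equivalently, deleting $\tau_1,\tau_2$ and matching the $k$ stems) identifies faces in pairs, so $\#\mathrm{F}(\rm) = \#\mathrm{F}(\rm_1) + \#\mathrm{F}(\rm_2) - k$; hence $w(\rm) = u^{-k}\, w^{\mathsf t}(\rm_1,\tau_1)\, w^{\mathsf c}(\rm_2,\tau_2)$ where $w^{\mathsf t}, w^{\mathsf c}$ denote the trumpet/cornet weights with $u$ recording the face count of $\rm_1$, $\rm_2$ respectively.

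With this in hand the proof is a bookkeeping step. Summing $w(\rm)$ over all doubly rooted maps with white–white roots and minimal black-cut weight exactly $k$, the $k$-to-one correspondence and the weight identity give $\sum_{(\rm,\rho_1,\rho_2)} w(\rm) = k\, u^{-k} \sum_{(\rm_1,\tau_1)} w^{\mathsf t}(\rm_1,\tau_1)\sum_{(\rm_2,\tau_2)} w^{\mathsf c}(\rm_2,\tau_2) = k\, u^{-k}\, M^{\mathsf t}_{\circ,k}\, M^{\mathsf c}_{\circ,k}$, where ``white–white'' forces $\rho_1,\tau_2$ white and (since $\tau_1$ is glued onto the $R_{\mathrm{min}}$ side containing the white root $\rho_1$, but $\tau_1$ is always black by definition of a \dtrum{}) — wait, here I need to be careful: in Proposition~\ref{prop:ktoone} the added vertices $\tau_1,\tau_2$ are forced to be black and white respectively by the bipartiteness of the complete closure, and the black cut $C_{\mathrm{min}}$ has all $S_{\mathrm{min}}$-endpoints of cut edges black, so $\tau_1$ (on the $R_{\mathrm{min}}$ side) is adjacent only to black vertices and $\tau_2$ (on the $S_{\mathrm{min}}$ side) is the black boundary vertex — re-reading the statement, $(\rm_1,\tau_1)$ is a \dtrum{} so $\tau_1$ is black and $(\rm_2,\tau_2)$ is a \dcorn{} so $\tau_2$ is white. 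The roots $\rho_1 \in \rm_1$ and $\rho_2 \in \rm_2$ keep their colors. So the color of $\rho_1$ in $(\rm,\rho_1,\rho_2)$ equals the color of the root of $(\rm_1,\tau_1)$ and similarly for $\rho_2$; thus the three cases (white–white, white–black, black–black) of the corollary correspond exactly to the three products $M^{\mathsf t}_{\circ,k} M^{\mathsf c}_{\circ,k}$, $M^{\mathsf t}_{\circ,k} M^{\mathsf c}_{\bullet,k}$, $M^{\mathsf t}_{\bullet,k} M^{\mathsf c}_{\bullet,k}$. Finally, every doubly rooted bipartite map admits a black cut (its minimal black-cut weight is some $k\ge 1$) unless $\rho_1$ is black, $\rho_2$ is white and they are adjacent — a case that cannot occur for white–white, white ($\rho_1$)–black, or black–black roots in the configuration tracked here, so summing over $k\ge 1$ exhausts all the relevant maps, and we obtain the three displayed formulas.

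I expect the main obstacle to be the careful verification of the face-count relation $\#\mathrm{F}(\rm)=\#\mathrm{F}(\rm_1)+\#\mathrm{F}(\rm_2)-k$, which is what produces the factor $u^{-k}$; this requires understanding precisely how the faces incident to $\tau_1$ in $\rm_1$ and to $\tau_2$ in $\rm_2$ get identified when the $k$ stems are matched planarly (the $k$ edges around $\tau_1$ cut the disk around $\tau_1$ into $k$ sectors/faces, and likewise around $\tau_2$, and matching the stems glues these $2k$ faces into $k$ faces of $\rm$). A secondary, more cosmetic, point is to make sure the weight conventions for doubly rooted maps are stated so that both roots receive vertex weights, and that no double counting of faces occurs when $\rho_1$ or $\rho_2$ is incident to the outer face; since $M_{\circ,\circ}$ etc. are generating series of planar (not plane) maps, there is no marked face and $u$ simply records the total number of faces, so this is consistent with~\eqref{eq:weightPlanarBipartite}.
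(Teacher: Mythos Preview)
Your proposal is correct and follows essentially the same approach as the paper: apply the $k$-to-one correspondence of Proposition~\ref{prop:ktoone}, check the weight compatibility (in particular the face-count discrepancy producing the $u^{-k}$ factor, which the paper phrases as ``each of the $k$ faces incident to the cut-set is divided into two separate faces''), and verify that a black cut always exists for the three color configurations under consideration. The only cosmetic difference is that the paper exhibits explicit black cuts (namely $(\{\rho_1\},\rV\setminus\{\rho_1\})$ when $\rho_1$ is white, and $(\{\rho_1\}\cup N(\rho_1),\,\cdot\,)$ when both roots are black), whereas you appeal to the earlier remark that the only obstruction is ``$\rho_1$ black, $\rho_2$ white, adjacent''; both are fine, but you should tidy up the digression on the colors of $\tau_1,\tau_2$ in the final write-up.
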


\begin{proof}
    This follows directly from Proposition~\ref{prop:ktoone} once it is established that the maps counted by the generating series of the statement all have black cuts. If $\rho_1$ is white, then $(\{\rho_1\} , \rV \setminus \{\rho_1 \})$ is a black cut. If $\rho_1$ and $\rho_2$ are both black, denoting $R$ as the union of  $\rho_1$ and its neighbors, $(R, \rV \setminus R)$ is a black cut. 
    
   Observe that the decomposition of doubly rooted maps with minimal black-cut weight  $k$ into a pair consisting of a  \dtrum~ and a  \dcorn~ results in the division of each of the  $k$ faces incident to the cut-set into two separate faces. The factor $u^{-k}$ that appears in each sum specifically compensates for this fact.
\end{proof}


\section{Application to the Ising model}\label{sec5}

In this section, we study and enumerate maps equipped with spin configurations. We then derive a Lagrangian parametrization of their generating function in the case of quartic degrees.  

In this section, let $t$, $\nu$ and $u$ represent formal variables, and let $\ux\coloneqq \left(x_k\right)_{k\geq 0}$ and $\uy\coloneqq \left(y_k\right)_{y\geq 0}$ denote two families of formal variables.

	\subsection{Definitions} 
A \emph{spin configuration} on a map $\rm$ is a coloration in black and white of its vertices, i.e. a mapping $\sigma$ from $\rV(\rm)$ to $\{\bullet,\circ\}$. In this context, an edge $e=\{v_1,v_2\}$ is \emph{monochromatic} if $\sigma(v_1)=\sigma(v_2)$, and \emph{frustrated} otherwise. The number of monochromatic edges of  $(\rm,\sigma)$ is denoted by  $m(\rm,\sigma)$, see Figure~\ref{fig: Ising map ex}.

The \emph{weight} $w^{\mathrm{Ising}}(\rm,\sigma)$ of a map endowed with a spin configuration $(\rm,\sigma)$, is defined 
as follows:
\begin{equation}\label{eq:weightIsing}
	w^{\mathrm{Ising}}(\rm,\sigma) \coloneqq t^{\#\mathrm{E}(\rm)}\nu^{m(\rm,\sigma)}u^{\#\mathrm{F}(\rm)}\prod_{v\in \rV_\circ(\rm,\sigma)}x_{\deg(v)}\prod_{v\in \rV_\bullet(\rm,\sigma)}y_{\deg(v)}. 
\end{equation}
Let $\mathcal{I}_\circ$ be the set of rooted planar maps endowed with a spin configuration, in which the root vertex is white. Its associated weighted generating function is the formal power series
defined by:
\begin{equation}\label{eq: def gf I}
	I_\circ(u) \equiv I_\circ(\ux,\uy,t,\nu, u) \coloneqq \sum\limits_{(\rm,\sigma)\in\mathcal{I}_\circ}{w^\mathrm{Ising}(\rm,\sigma)}.
\end{equation}
Note that for a fixed number of edges, there exist only a finite number of planar maps endowed with a spin configuration. Hence, $I_\circ(u)$ lies in $\mathbb{Q}[\ux,\uy,\nu,u]\llbracket t \rrbracket$.

\begin{figure}[t!]
	\centering
	\includegraphics[width=0.3\linewidth ,page=1]{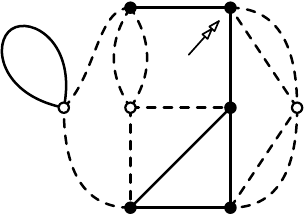}
	\caption{\label{fig: Ising map ex} A black-rooted plane map equipped with a spin configuration, in which all vertices have degree 4. Monochromatic edges are represented by solid lines, while frustrated edges are shown as dashed lines.}
\end{figure}

\begin{rema}
The variable $t$ is redundant in the following sense:
\begin{equation*}
 	I_\circ(\ux,\uy,t,\nu,u) = I_\circ\Big((x_k\, t^{k/2})_{k\geq 1},\, (y_k\, t^{k/2})_{k\geq 1},\, 1,\, \nu ,\, u\Big).
\end{equation*} 
This results from enumerating the edges via the degrees of the vertices in maps. Still, in the rest of this section, the use of the variable $t$ will simplify subsequent discussions.  
\end{rema}

	\subsection{Connection with bipartite maps}\label{sec: Connection with bipartite maps} 
In this section, we recall the well-known correspondence between maps endowed with a spin configuration and bipartite maps; see \cite{Eynard16, BousquetMelouCarranceLouf_Ising, BousquetMelouSchaeffer_Bipartite}, and see Figure~\ref{fig: maps endowed with a spin configuration in D and Bipartite map in L} for an example.

We consider bipartite maps where vertices of degree 2 can either be square or round, while all other vertices are round. The correspondence between maps with a spin configuration and bipartite maps (which is many-to-one) operates as follows: for a given map with a spin configuration, each edge is replaced by a chain, of arbitrary length, made of square vertices of degree two, in such a way that the resulting map is bipartite. Specifically, monochromatic edges are replaced with chains containing an odd number of vertices, whereas frustrated edges are replaced by chains with an even number of vertices.

Reciprocally, for a bipartite map containing square vertices of degree two, the corresponding map with a spin configuration is obtained by contracting each maximal chain of square vertices into a single edge.

In order to translate this correspondence into a relation for generating functions, we first introduce the following notation. Let $\mathcal{M}_{\circ}^{\sq}$ denote the set of bipartite planar maps whose vertices of degree two are square or round and the other vertices are all round and \emph{rooted at a white round vertex}. For $\rm\in \mathcal{M}_{\circ}^{\sq}$, we write $V_\circ(\rm)$ (resp. $V_\bullet(\rm)$) for the set of white (resp. black) \emph{round} vertices of $\rm$, and define the generating function of $\mathcal{M}_{\circ}^{\sq}$ by:
\begin{equation}
    M_\circ^\sq(u)\equiv M_\circ^{\sq}(\ux,\uy, t, \nu, u) := \sum_{\rm \in \mathcal{M}_{\circ}^{\sq}} w^\sq(\rm),
\end{equation}
where the weight $w^\sq(\rm)$ of a map $\rm \in \mathcal{M}_{\circ}^{\sq}$ is given by\textbf{}
\begin{align*}
	w^\sq(\rm)  &\coloneqq t^{\#\{\text{maximal chains of squares}\}}\nu^{\#\{\text{squares}\}}u^{\#\mathrm{F}(\rm)}\prod_{v\in \rV_\circ(\rm)}x_{\deg(v)}\prod_{v\in \rV_\bullet(\rm)}y_{\deg(v)},\\
                &{\ = t^{\#\{\text{maximal chains of squares}\}}\nu^{\#\{\text{squares}\}}\,w(\rm)},\\
                & = t^{\#\mathrm{E}(\rm)} (\nu/t)^{\#\{\text{squares}\}}\,w(\rm).
\end{align*}
Recall that, in the above display, $w$ denotes the weight for bipartite planar maps defined in~\eqref{eq:weightPlanarBipartite}.

Next, we define the following key change of variables that will link the generating functions of bipartite maps and Ising maps:

\begin{defi}
Let $\Theta$ be the change of variables on $\Q\llbracket\ux,\uy, t, \nu, u\rrbracket$ defined by: 
\begin{equation*}
	\Theta\Big(A\big(\ux,\uy, t, \nu, u\big)\Big) \coloneqq A\Big(\ux,\uy, \frac{t}{1-\nu^2}, \nu, u\Big),
\end{equation*}
for any $A\in\Q\llbracket\ux,\uy, t, \nu, u\rrbracket$.
\end{defi}

\begin{figure}[t!]
	\centering
	\includegraphics[width=0.6\linewidth ,page=1]{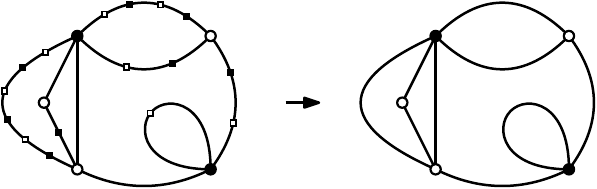}
    \caption{\label{fig: maps endowed with a spin configuration in D and Bipartite map in L} A bipartite map $\tilde{\rm}$ with round vertices of arbitrary degree and square vertices of degree 2 (left). Its weight is ${w(\tilde{\rm})=x_2x_3x_4y_4y_5t^9\nu^{16}u^6}$.
    Its corresponding map endowed with a spin configuration $(\rm,\sigma)$ obtained by contracting the chains of square vertices. Its weight is ${w^\mathrm{Ising}(\rm,\sigma)=x_2x_3x_4y_4y_5t^9\nu^{2}u^6}$ (right).}
\end{figure}

All together, the one-to-many correspondence between bipartite maps and maps endowed with a spin configuration leads to the following:

\begin{clai}\label{claim: Theta Msquare and Ising}
The generating functions $I_\circ(u)$ and $M_\circ^\sq(u)$ satisfy the following relation in $\Q\llbracket\ux,\uy, t, \nu, u\rrbracket$:
\begin{equation}
	M_\circ^\sq(u) = \Theta\Big(I_\circ(u)\Big).
\end{equation}
\end{clai}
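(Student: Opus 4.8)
The plan is to establish the identity by translating the one-to-many correspondence between bipartite maps with square vertices and Ising maps directly into the language of generating functions, and then observing that summing over chain lengths produces precisely the substitution $t \mapsto t/(1-\nu^2)$ encoded by $\Theta$. First I would fix a map $(\rm,\sigma)\in\mathcal{I}_\circ$ endowed with a spin configuration, and describe explicitly the fiber of the correspondence above it: every element $\tilde\rm\in\mathcal{M}_\circ^{\sq}$ contracting to $(\rm,\sigma)$ is obtained by choosing, independently for each edge $e\in\mathrm{E}(\rm)$, a chain of square vertices of degree $2$ replacing $e$, subject to the parity constraint --- the chain has an odd number of squares if $e$ is monochromatic, and an even number (possibly zero) of squares if $e$ is frustrated. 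Since the root corner of $(\rm,\sigma)$ is at a white round vertex and round vertices are preserved under contraction, the rooting is transported without ambiguity, so the correspondence restricts to a genuine bijection between $\mathcal{I}_\circ$ (with a chosen chain decoration) and $\mathcal{M}_\circ^{\sq}$.

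The key computational step is to sum the weights $w^\sq$ over a fiber. A chain of $j\geq 0$ squares replacing a single edge $e$ of $(\rm,\sigma)$ constitutes one maximal chain of squares (so contributes one factor of $t$) and contributes $\nu^j$ from the $j$ squares; it also subdivides no face and adds no round vertex, so the remaining factors $u^{\#\mathrm{F}}\prod x_{\deg}\prod y_{\deg}$ are unchanged. Hence replacing $e$ by a chain of $j$ squares multiplies the base weight by $t\,\nu^j$. Summing over the allowed values of $j$: for a monochromatic edge this is $\sum_{j\geq 1,\ j\text{ odd}} t\,\nu^j = t\,\nu/(1-\nu^2)$, and for a frustrated edge this is $\sum_{j\geq 0,\ j\text{ even}} t\,\nu^j = t/(1-\nu^2)$. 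Therefore, writing $E = \#\mathrm{E}(\rm)$ and $m = m(\rm,\sigma)$ for the number of monochromatic edges, the total contribution of the fiber over $(\rm,\sigma)$ to $M_\circ^\sq(u)$ is
\begin{equation*}
    \left(\frac{t\,\nu}{1-\nu^2}\right)^{m}\left(\frac{t}{1-\nu^2}\right)^{E-m} u^{\#\mathrm{F}(\rm)}\prod_{v\in\rV_\circ(\rm,\sigma)}x_{\deg(v)}\prod_{v\in\rV_\bullet(\rm,\sigma)}y_{\deg(v)} = \left(\frac{t}{1-\nu^2}\right)^{E}\nu^{m}\,u^{\#\mathrm{F}(\rm)}\prod_{v\in\rV_\circ}x_{\deg(v)}\prod_{v\in\rV_\bullet}y_{\deg(v)}.
\end{equation*}
Comparing with the definition of $w^\mathrm{Ising}(\rm,\sigma)$ in~\eqref{eq:weightIsing}, this is exactly $w^\mathrm{Ising}(\rm,\sigma)$ with $t$ replaced by $t/(1-\nu^2)$, i.e. $\Theta\big(w^\mathrm{Ising}(\rm,\sigma)\big)$.

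Finally I would assemble the pieces: since $M_\circ^\sq(u) = \sum_{\tilde\rm\in\mathcal{M}_\circ^{\sq}} w^\sq(\tilde\rm)$ decomposes as a sum over $(\rm,\sigma)\in\mathcal{I}_\circ$ of the fiber sums just computed, and the latter equals $\Theta\big(w^\mathrm{Ising}(\rm,\sigma)\big)$, linearity of $\Theta$ gives $M_\circ^\sq(u) = \Theta\big(\sum_{(\rm,\sigma)} w^\mathrm{Ising}(\rm,\sigma)\big) = \Theta\big(I_\circ(u)\big)$. The only point requiring a little care is to check that these manipulations are legitimate at the level of formal power series: one should verify that each coefficient of $t^n$ on both sides is a well-defined polynomial, so that the interchange of summations and the termwise geometric-series evaluation make sense. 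This is where the remark preceding the claim is useful --- the exponent of $t$ in $w^\sq$ counts maximal chains, hence is bounded by $\#\mathrm{E}(\rm)$ and ultimately controlled by the vertex degrees, so only finitely many bipartite maps contribute to any fixed power of $t$ once one also tracks the $x_k,y_k$; and similarly the substitution $t\mapsto t/(1-\nu^2)$ is a well-defined operation on $\Q\llbracket\ux,\uy,t,\nu,u\rrbracket$. I expect this bookkeeping (rather than any genuine combinatorial difficulty) to be the main obstacle to a fully rigorous write-up.
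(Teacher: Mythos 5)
Your proof is correct and follows exactly the route the paper intends: the paper states this claim without a formal proof, deriving it directly from the one-to-many correspondence described just before it, and your write-up (geometric summation over chain lengths with the parity constraint, yielding the substitution $t\mapsto t/(1-\nu^2)$ edge by edge) is precisely the formalization of that argument. Your handling of the convention that an edge carrying zero squares still contributes one factor of $t$ is consistent with the paper's identity $t^{\#\{\text{maximal chains}\}}\nu^{\#\{\text{squares}\}}=t^{\#\mathrm{E}}(\nu/t)^{\#\{\text{squares}\}}$, so no gap remains.
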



In particular, for any $m\geq 3$, the investigation of the generating function of $m$-regular maps endowed with a spin configuration reduces to studying  the generating function of bipartite maps with vertex degrees 2 and $m$, rooted at a vertex of degree $m$. Section \ref{sec: Lagragian Ising Quartic} focuses on this analysis in the quartic case, i.e., when $m=4$.

\begin{rema}
	The change of variables $\Theta$ is invertible on $\Q\llbracket\ux,\uy, t, \nu, u\rrbracket$, indeed we have: 
    \[
    \Theta^{-1}\Big(A\big(\ux,\uy, t, \nu, u\big)\Big) = A\Big(\ux,\uy, t\,(1-\nu^2), \nu, u\Big),
    \]
    so that we can write $\Theta^{-1}(M_\circ^\sq(u))= I_\circ(u)$. 
	However, this change of variables $\Theta^{-1}$ is not combinatorial. By that we mean that, if $A\in \Q\llbracket\ux,\uy, \nu, u\rrbracket$ has non-negative coefficients, then $\Theta^{-1}(A)$ does not necessarily have non-negative coefficients. 

    However, in the next section, we will prove that $\Theta^{-1}(B_1)$ does have non-negative coefficients. 
\end{rema}

	\subsection{Combinatorial interpretation of well-charged trees of charge 1.} \label{sec: Trees charge 1}
In this section, we give an interpretation of the formal power series $\Theta^{-1} \left(B_1(u)\right)$, as the generating function of a family of maps endowed with a spin configuration.

Let $\mathcal{R}$ be the set of quadruples $(\rm,\rho,\tau,\sigma)$ consisting of a rooted planar map endowed with a spin configuration $\sigma$, with a white root vertex $\rho$ of degree one and with a marked black vertex $\tau$ also of degree one. Since $\rho$ and $\tau$ have degree one, we identify them with their unique incident corners. Moreover, we do not require them to be incident to the same face, see Figure~\ref{fig: RPPmaps}.

Let $R(\ux,\uy,t,\nu,u) \in \Q\llbracket\ux,\uy, t, \nu, u\rrbracket$  be the weighted generating function of maps in $\mathcal{R}$, defined as follows:
\begin{equation*}
	R(\ux,\uy,t,\nu,u) \coloneqq  \sum\limits_{(\rm,\tau,\sigma)\in\mathcal{R}}{{(x_1 y_1)}^{-1}\,w^\mathrm{Ising}(\rm,\sigma)},
\end{equation*}
where $w^\mathrm{Ising}$ is defined in~\eqref{eq:weightIsing}. Note that the factor ${1}/{x_1 y_1}$ cancels the weights of the root vertex and of the marked vertex. We have the following:

\begin{prop}\label{prop: Psquare as the generating function of maps endowed with a spin configuration}
Let $P (\ux,\uy , u) \coloneqq u \left( 1 + B_1 (\ux,\uy , u) \right)$ and denote by $P^\sq$ the series in  $\Q\llbracket\ux,\uy, t, \nu, u\rrbracket$ obtained from $P$ by the change of variables:
\begin{equation}\label{eq: Change variables M square}
x_k \mapsto x_k\, t^{k/2} + \nu\,\mathbf{1}_{\left\{k=2\right\}}  \hspace{0.6cm}\text{and,}\hspace{0.6cm} y_k  \mapsto y_k\, t^{k/2} + \nu\,\mathbf{1}_{\left\{k=2\right\}}.
\end{equation}
Then, the following identity holds in $\Q\llbracket\ux,\uy, t, \nu, u\rrbracket$:
\begin{equation}\label{eq: Relation between R tilde and B1 square}
	\Theta\Big(R(\ux,\uy,t,\nu,u)\Big) = t \, P^{\sq}(\ux,\uy,t,\nu,u).
\end{equation}
\end{prop}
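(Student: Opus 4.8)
The plan is to prove \eqref{eq: Relation between R tilde and B1 square} by giving a direct bijective interpretation of $B_1(u)$ — the generating series of black-rooted well-charged planted trees of charge $1$ — in terms of maps with a spin configuration, and then tracking how the two changes of variables $\Theta$ and \eqref{eq: Change variables M square} interact. First I would unfold the definitions: recall from Section~\ref{sec: def cuts trumpets corners} and Theorem~\ref{theo: Bijection well-rooted trees with general charge} that $\mathcal{T}_1$ is in bijection with \dtrum s $(\rm,\tau)$ with $\deg(\tau)=1$ via the complete closure; a black-rooted well-charged \emph{planted} tree of charge $1$ corresponds, after closing, to such a \dtrum\ rooted at a dangling half-edge at a black vertex. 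Since $\deg(\tau)=1$ and the minimality of the trivial cut is automatic in that case, a \dtrum\ with $\deg(\tau)=1$ is simply a bipartite planar map with a marked black vertex $\tau$ of degree $1$ and a marked corner (the root) incident to a black vertex. Thus $1 + B_1(u)$ — the $+1$ accounting for the trivial single-vertex tree — counts bipartite planar maps with a root corner at a black vertex and a marked degree-one black vertex $\tau$, weighted as in~\eqref{eq:weightPlaneBipartite} (with the extra factor $u$ in $P$ restoring the face marked by the outer face, turning $\bar w$ back into $w$).

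Next I would feed in the square-vertex dictionary of Section~\ref{sec: Connection with bipartite maps}. The substitution \eqref{eq: Change variables M square} is exactly the substitution that, applied to the weight of a bipartite map, turns each degree-$2$ vertex into either a genuine round degree-$2$ vertex (weight $x_2 t$ or $y_2 t$) or a square vertex (weight $\nu$); this is the generating-function incarnation of building an element of $\mathcal{M}_\circ^\sq$. Applying it to $P = u(1+B_1)$ therefore produces the generating series of bipartite maps in $\mathcal{M}^\sq$ with a root corner at a round or square vertex that, in the Ising picture, is black, together with a marked black round-or-square vertex of degree $1$ — but a degree-$1$ vertex can never be square (squares have degree $2$), so $\rho$ and $\tau$ must both be round. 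Contracting maximal chains of squares as in Section~\ref{sec: Connection with bipartite maps} turns such a map into a map with a spin configuration in which $\rho$ is a white degree-$1$ vertex and $\tau$ a black degree-$1$ vertex (the root of the Ising map being black means its neighbour across the chain, which is $\rho$, is white after contraction — here one must be careful about the parity bookkeeping of the chain attached to the root, which is where the extra factor of $t$ on the right-hand side comes from, cf. the $t^{\#\mathrm E}$ versus $t^{\#\{\text{chains}\}}$ normalization). That is precisely the class $\mathcal{R}$, with the weight normalized by $(x_1 y_1)^{-1}$ to kill the weights of $\rho$ and $\tau$; so $t\,P^\sq$ equals $M$-style generating series of $\mathcal R$ in the variables $t' = t$, i.e. it equals $\Theta^{-1}$ applied to nothing yet.

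Finally I would reconcile the change of variables. On one hand, $P^\sq$ is built from $P(\ux,\uy,u)$ — which has no $t$ or $\nu$ — by the substitution \eqref{eq: Change variables M square}; on the other hand $R$ is a series in all of $\ux,\uy,t,\nu,u$, and the claim is $\Theta(R) = t P^\sq$, i.e. $R = t\,\Theta^{-1}(P^\sq)$. The point is that the counting of $\mathcal R$ by $w^{\mathrm{Ising}}$ already sums over all chain-insertions consistent with the spin configuration, and the generating-series identity $M_\circ^\sq = \Theta(I_\circ)$ of Claim~\ref{claim: Theta Msquare and Ising} — together with the observation that a chain of odd (resp. even) length of squares contributes a geometric series in $\nu$ summing to $\nu/(1-\nu^2)$ times $1/(1-\nu^2)^{\pm}$, which is exactly what $\Theta$ encodes — transfers the square-vertex bookkeeping of $P^\sq$ into the Ising bookkeeping of $R$ after applying $\Theta$. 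Concretely, I would argue that $P^\sq$ is the $\mathcal M^\sq$-generating series of the $\mathcal R$-like family \emph{with squares present}, and that $\Theta$ is designed so that $\Theta(\text{Ising series}) = (\text{square series})$; applying $\Theta$ to $R$ and comparing chain-by-chain with $t P^\sq$ gives the identity, the factor $t$ matching the one extra edge (one extra maximal chain) created when the degree-$1$ root is re-attached.

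\textbf{Main obstacle.} The bijective core — identifying $1 + B_1$ with the relevant pointed bipartite maps via Theorem~\ref{theo: Bijection well-rooted trees with general charge} — is essentially a specialization and should go smoothly. The delicate point will be the precise bookkeeping of the two substitutions near the degree-one marked vertices and near degree-two vertices: making sure that the factor $t$ on the right-hand side, the $(x_1y_1)^{-1}$ normalization, and the parities of the square-chains (odd for monochromatic, even for frustrated) all line up so that $\Theta(R)$ equals $t\,P^\sq$ on the nose rather than up to a correction. I expect that the cleanest way to handle this is to prove the identity first at the level of bipartite maps with squares (an exact bijection, no generating-function sleight of hand), and only then invoke Claim~\ref{claim: Theta Msquare and Ising} to pass to $\Theta^{-1}$, so that the analytic change of variables is used exactly once and in a black-box manner.
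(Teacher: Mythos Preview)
Your approach is essentially the paper's: interpret $P=u(1+B_1)$ combinatorially as the generating series of a family $\mathcal{P}$ of bipartite maps via Theorem~\ref{theo: Bijection well-rooted trees with general charge}, pass to the square version $\mathcal{P}^\sq$ via the substitution~\eqref{eq: Change variables M square}, and then invoke Claim~\ref{claim: Theta Msquare and Ising} as a black box to get back to $\mathcal{R}$. Your final paragraph --- do the bijection at the level of maps-with-squares, then apply $\Theta$ once --- is exactly how the paper proceeds.

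There is, however, a genuine color confusion in your first paragraph that would derail the argument if carried through. Theorem~\ref{theo: Bijection well-rooted trees with general charge} is a bijection for \emph{rooted} (not planted) well-charged trees, so you cannot directly feed it a black-rooted planted tree. The correct reading is: a well-charged tree of charge~$1$ rooted at a \emph{white} vertex of degree~$1$ is either the trivial tree consisting of a single white vertex carrying one closing stem (this is the ``$+1$''), or it decomposes as that white root attached by one edge to a black planted subtree of charge~$1$ (this is $B_1$). Hence the closures are \dtrum s rooted at a \emph{white} vertex of degree~$1$, with $\tau$ black of degree~$1$; this matches $\mathcal{R}$ on the nose. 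Your sentence about ``the root of the Ising map being black means its neighbour across the chain, which is $\rho$, is white after contraction'' is an attempt to repair a problem that is not there; no parity argument on the root chain is needed, and the degree-one root cannot grow a square chain anyway. Once the colors are right, the factor $t$ is not mysterious: the weight $(x_1y_1)^{-1}w^\sq$ on $\mathcal{P}^\sq$ differs from the normalization $(x_1y_1t)^{-1}w^\sq$ that gives $P^\sq$ by exactly one factor of~$t$, which is the claim.
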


\begin{figure}[t]
  	\centering
  	\includegraphics[width=0.3\linewidth ,page=1]{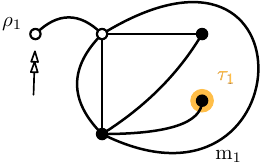}\quad
    \includegraphics[width=0.3\linewidth ,page=2]{RPPmaps.pdf}\quad
  	\includegraphics[width=0.3\linewidth ,page=3]{RPPmaps.pdf}
 	\caption{\label{fig: RPPmaps} Three similar maps: a map endowed with a spin configuration $(\rm_1,\rho_1,\tau_1,\sigma_1)\in\mathcal{R}$ (left), and two bipartite maps $(\rm_2,\rho_2,\tau_2)\in{\mathcal{P}}^{\protect\sq}$ (middle) and $(\rm_3,\rho_3,\tau_3)\in\mathcal{P}$ (right).}
\end{figure}

\begin{proof}
We begin by interpreting the power series $P (\ux,\uy , u)$ as the generating function of the following set of \emph{bipartite} maps, see Figure~\ref{fig: RPPmaps}. 
Let $\mathcal{P}$ denote the set of rooted \emph{bipartite} maps with:
\begin{enumerate}
	\item a white root vertex $\rho$ of degree one, and
	\item a distinguished black vertex $\tau$ of degree one.
\end{enumerate}
Equivalently, $\mathcal{P}$ is the set of \dtrum s rooted at a white vertex of degree one and a black marked vertex of degree one. It follows from Theorem~\ref{theo: Bijection well-rooted trees with general charge}, that it is in bijection with the set of well-charged trees of charge $1$, rooted at a white vertex of degree 1. Excluding the tree with a single white vertex incident to a closing stem, these trees are in bijection with black well-charged planted trees of charge 1. 
Hence, the generating series of $\mathcal{P}$ (with no weight given to $\tau$ and $\rho$) is equal to the formal power series $P (\ux,\uy , u)$. 
Equivalently, we have:
 \begin{equation*}
	P(\ux,\uy,u) = \sum\limits_{\rm\in\mathcal{P}}{{(x_1 y_1)}^{-1}\,w(\rm)} .
\end{equation*}

Now, let $\mathcal{P}^\sq$ be the set of rooted \emph{bipartite} maps with :
\begin{enumerate}
    \item round and square vertices of degree $2$,
    \item round vertices of degree different that $2$,
	\item a white (round) root vertex $\rho$ of degree one,
	\item a black (round) marked vertex $\tau$ of degree one.
\end{enumerate}
Giving a weight $(x_1y_1t)^{-1}w^\sq(\rm)$ to any $\rm \in \mathcal{P}^\sq$, we obtain that the generating series of $\mathcal{P}^\sq$ is equal to the series ${P}^\sq$ defined in the proposition.



Finally, as explained in Section~\ref{sec: Connection with bipartite maps}, the operation of contracting the chains of squares is a many-to-one correspondence between the maps in $\mathcal{P}^\sq$ and the maps in $\mathcal{R}$. In particular, Equation \eqref{eq: Relation between R tilde and B1 square}  is now just a specialization of Claim \ref{claim: Theta Msquare and Ising}.
\end{proof}

\section{Explicit computations in the quartic case}\label{sec6}
In this section, we apply the previous enumerative results to quartic maps, i.e. maps for which every vertex has degree $4$. We first provide explicit parametrizations for the generating functions of bipartite maps of degree $2$ and $4$, then derive explicit expressions for quartic maps with an Ising model. All computations performed in this section are detailed in the Maple companion~\cite{Maple}.

	\subsection{Bipartite maps with vertex degrees $2$ and $4$}\label{sec: Bib Quartic}
In this section, we set $x_k=y_k=0$ for all $k\notin\left\{2,4\right\}$, then we have: 
\begin{prop}\label{prop: Algebraic parametrization for M(u)}
Let $P$ be the unique formal power series in $\Q\left[u\right]\llbracket x_2,x_4, y_2,y_4\rrbracket$ with constant term $u$ satisfying
\begin{equation}\label{eq: Algebraic equation for P}
	P = u + 3x_4y_4 P^3 + P \frac{\big(x_2 + 3x_4y_2 P\big)\big(y_2 + 3x_2y_4 P\big)}{\big(1-9x_4y_4 P^2\big)^2}.
\end{equation}
The generating function of white-rooted bipartite planar maps $M_\circ(u)$ can be expressed in terms of $P$ as follows:
\begin{equation}
	M_\circ(u) = 15(x_4y_4)^2P^6 - 5x_4y_4P^4 + 4ux_4y_4P^3 + \frac{1}{3}(x_2y_2-1)\, P^2 +  \frac{4}{3}uP - u^2.
\end{equation}
\end{prop}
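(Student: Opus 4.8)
The strategy is to specialize the generating-series identities from Section~\ref{sec: trees gf} and Section~\ref{sec: bipartite gf} to the case $x_k=y_k=0$ for $k\notin\{2,4\}$, and then perform the coefficient extractions $[\xi^{\le 1}]$, $[\xi^{\ge 0}]$, $[\xi^0]$ explicitly. First I would write down the truncated versions of the fixed-point equations~\eqref{eq: Recursive eq GF black blossoming trees} and~\eqref{eq: Recursive eq GF white blossoming trees}. Since only $l+1\in\{2,4\}$ (i.e.\ $l\in\{1,3\}$) contribute on the black side and similarly on the white side, these become
\begin{align*}
B(u,\xi) &= \bigl[\xi^{\le 1}\bigr]\Bigl( y_2\,(u\xi^{-1}+W) + y_4\,(u\xi^{-1}+W)^3 \Bigr),\\
W(u,\xi) &= \bigl[\xi^{\ge 0}\bigr]\Bigl( x_2\,(\xi+B) + x_4\,(\xi+B)^3 \Bigr).
\end{align*}
The key structural observation, which follows from the charge constraints defining well-charged trees, is that $B(u,\xi)$ is a Laurent polynomial in $\xi$ supported on $\xi^{\le 1}$ and $W(u,\xi)$ on $\xi^{\ge 0}$; moreover, because here the only vertex degrees are $2$ and $4$, the supports are in fact \emph{finite} — one expects $B(u,\xi)=B_1\xi + B_0 + B_{-1}\xi^{-1}$ and $W(u,\xi)=W_0 + W_1\xi + W_2\xi^2$ (a black planted tree of charge $\le 1$ built from degree-$2$ and degree-$4$ vertices cannot have charge below $-1$, and symmetrically for white). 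With $P\coloneqq u(1+B_1)$ as in Proposition~\ref{prop: Psquare as the generating function of maps endowed with a spin configuration}, I would substitute these truncated ansätze into the two equations above and match coefficients of each power of $\xi$. This yields a finite system in the unknowns $B_{-1},B_0,B_1,W_0,W_1,W_2$ whose solution can be reduced, by elimination, to a single algebraic equation. The goal is to show this reduces exactly to~\eqref{eq: Algebraic equation for P}; the denominator $(1-9x_4y_4P^2)^2$ and the cubic term $3x_4y_4P^3$ should emerge naturally from expanding $(u\xi^{-1}+W)^3$ and $(\xi+B)^3$ and resumming a geometric-type series in the "diagonal" coefficients.

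For the formula for $M_\circ(u)$, I would start from Proposition~\ref{prop:genSeries_bipartite_wellcharged}: $\bar M_\circ(u)=[\xi^0]\sum_l x_l(\xi+B)^l = [\xi^0]\bigl(x_2(\xi+B)^2 + x_4(\xi+B)^4\bigr)$ in the truncated setting. Expanding and extracting the constant term in $\xi$ gives $\bar M_\circ(u)$ as an explicit polynomial in $B_{-1},B_0,B_1$ and the $x_i,y_i$. Then, using $\bar M_\circ(u)=\partial_u M_\circ(u)$ from~\eqref{eq:dMbar}, I would integrate with respect to $u$. The claimed closed form for $M_\circ(u)$ is a polynomial in $P$, and since $P$ itself depends on $u$ through~\eqref{eq: Algebraic equation for P}, the verification amounts to checking $\partial_u\bigl(\text{RHS}\bigr)=\bar M_\circ(u)$, where $\partial_u P$ is computed by implicit differentiation of~\eqref{eq: Algebraic equation for P}. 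This is the standard "guess the primitive, then differentiate" technique: once the candidate is in hand, the identity is a finite algebraic computation (carried out in~\cite{Maple}), using~\eqref{eq: Algebraic equation for P} to eliminate higher powers of $P$ and the relation $\partial_u P = \bigl(1 - 3x_4y_4 P^2 - \partial_P(\cdots)\bigr)^{-1}$ to handle the $u$-derivative. The constant of integration is fixed by matching the lowest-order term (the empty map, $M_\circ = O(\text{positive degree})$, and $P = u + O(\text{higher})$).

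The main obstacle will be the elimination step: reducing the finite coefficient-matching system for $(B_{-1},B_0,B_1,W_0,W_1,W_2)$ down to the single clean equation~\eqref{eq: Algebraic equation for P} for $P=u(1+B_1)$, while correctly tracking how the off-diagonal components $B_{-1},B_0,W_0,W_2$ get expressed in terms of $P$. Getting the rational function on the right-hand side of~\eqref{eq: Algebraic equation for P} — in particular the exact power $(1-9x_4y_4P^2)^{2}$ in the denominator — requires care in resumming the contributions of chains of degree-$4$ vertices; I expect this is where a direct hand computation is most error-prone, which is presumably why the authors delegate the bookkeeping to the Maple companion~\cite{Maple}. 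A secondary subtlety is justifying the finiteness of the $\xi$-supports of $B$ and $W$ rigorously (rather than just as an ansatz), but this follows from the charge bounds in the definition of well-charged trees together with the restriction to degrees $2$ and $4$.
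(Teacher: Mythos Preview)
Your overall strategy matches the paper's: specialize the recursive tree equations~\eqref{eq: Recursive eq GF black blossoming trees}--\eqref{eq: Recursive eq GF white blossoming trees} to degrees $2$ and $4$, solve the resulting finite system to obtain the algebraic equation~\eqref{eq: Algebraic equation for P} for $P=u(1+B_1)$, express $\bar M_\circ(u)$ rationally in $P$ via Proposition~\ref{prop:genSeries_bipartite_wellcharged}, and integrate in $u$.

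There is, however, a concrete error in your ansatz for the $\xi$-supports of $B$ and $W$. You claim $B=B_1\xi+B_0+B_{-1}\xi^{-1}$ and $W=W_0+W_1\xi+W_2\xi^2$, asserting that a black planted tree built from vertices of degree $2$ and $4$ cannot have charge below $-1$. This is false: a single black vertex of degree $4$ carrying three opening stems already has charge $-3$, so $B_{-3}=y_4u^3\neq 0$; dually, a single white vertex of degree $4$ with three closing stems gives $W_3=x_4(1+B_1)^3\neq 0$. The correct observation (made explicit in the paper) is a \emph{parity} constraint: all charges are odd, so the nonzero components are $B_{-3},B_{-1},B_1$ and $W_1,W_3$, while $B_0,B_{-2},W_0,W_2$ vanish. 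With your stated supports the coefficient-matching system would be inconsistent, and in particular the term $3x_4y_4P^3$ in~\eqref{eq: Algebraic equation for P} --- which arises from the product $u^2W_3$ inside $y_4(u\xi^{-1}+W)^3$ --- would be missing. Once the supports are corrected, the elimination is straightforward (the paper carries it out by hand, citing~\cite{BousquetMelouSchaeffer_Bipartite} for the resulting equation), and all of $B_{-3},B_{-1},W_1,W_3$ are expressed rationally in $P$.

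On the integration step, your ``guess the primitive, differentiate, verify'' plan is valid and equivalent in content to what the paper does, though the paper frames it as an honest change of variables: it writes $M_\circ(u)=\int_0^u \bar M_\circ(v)\,dv = \int_0^{P(u)}\mathrm{Rat}(p)\,dp$ after checking that $\bar M_\circ/\partial_u P$ is rational in $P$ (by implicitly differentiating~\eqref{eq: Algebraic equation for P}), and then observes that the primitive happens to be rational. Either route leads to the stated formula after simplification using~\eqref{eq: Algebraic equation for P}.
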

\begin{proof}
We first compute the generating series of well-charged trees.
Note that, due to the vertex degree constraint, the only charges allowed are equal to $1$ or $3$ for white vertices, and to $-3, -1$ or $1$ for black vertices. The recursive equations \eqref{eq: Recursive eq GF black blossoming trees} and \eqref{eq: Recursive eq GF white blossoming trees} can be rewritten as the following system of equations, obtained in~\cite[Sec. 6.1]{BousquetMelouSchaeffer_Bipartite}:
\begin{equation*}
\centering
\begin{cases}
B_{-3}(u) & \hspace{-0.26cm}=  y_4 u^3,\\
B_{-1}(u) & \hspace{-0.26cm}=  y_2 u + 3 y_4 u^2 \, W_1(u),  \\
B_1(u)	  & \hspace{-0.26cm}= y_2 \, W_1(u) + 3y_4\,\Big(u^2 W_3(u) + u W_1(u)^2 \Big),\\
W_1(u)	  & \hspace{-0.26cm}=   x_2 \, \Big(1+B_{1}(u)\Big) + 3 x_4 \, B_{-1}(u) \, \Big(1+B_{1}(u) \Big)^2, \\
W_3(u)	  & \hspace{-0.26cm}=   x_4 \, \Big(1+B_{1}(u) \Big)^3.
\end{cases}
\end{equation*}
As already established in~\cite[Eq.(7) and (8)]{BousquetMelouSchaeffer_Bipartite}, the formal power series $P:=P(\ux,\uy,\nu,u) = u\, (1 + B_1(u))$ is the unique formal power series with constant coefficient equal to $u$ and satisfying \eqref{eq: Algebraic equation for P}. Moreover, all the generating series of well-charged trees can be expressed in terms of $P$, and we have:
\begin{equation*}
B_{-3}(u)=  y_4 u^3, \qquad
B_{-1}(u)=  u\frac{y_2+3x_2y_4P}{1-9x_4y_4P^2},   \qquad
B_1(u)	 = \frac{P}{u}-1,
\end{equation*}
and 
\begin{equation*}
W_1(u)= \frac{P(x_2+3y_2x_4P)}{u(1-9x_4y_4P^2)}, \qquad
W_3(u)=   x_4 \frac{P^3}{u^3}.
\end{equation*}

From now on, our approach differs from the one in~\cite{BousquetMelouSchaeffer_Bipartite}: we apply Theorem~\ref{theo: Bijection Bipartite maps} to express the generating series $\bar{M}_\circ(u)$ of plane rooted bipartite maps in terms of the generating series of blossoming trees (instead of considering \emph{balanced} blossoming trees). Applying Proposition~\ref{prop:genSeries_bipartite_wellcharged}, we get:
\begin{equation}\label{eq: Param barM with P, quadric case}
	\bar{M}_\circ(u) = 2x_2 P\frac{y_2 + 3x_2y_4P}{1-9x_4y_4P^2} +  2x_4P^2\left(y_4P + 3\left(\frac{y_2 + 3x_2y_4P }{1-9x_4y_4P^2}\right)^2\right).
\end{equation}
By~\eqref{eq:dMbar} and since $M_\circ(0)=0$, the series $M_\circ(u)$ can then be expressed as the following formal integral:
\begin{equation}
	M_\circ(u) = \int_0^u \bar{M}_\circ(v)dv = \int_0^u \frac{\bar{M}_\circ(\ux,\uy,v)}{\partial_vP(\ux,\uy,\nu,v)} \partial_vP(\ux,\uy,v)dv.
\end{equation}
Next, we prove that 
\begin{equation}\label{eq:rational}
\frac{\bar{M}_\circ(\ux,\uy, u)}{\partial_uP(\ux,\uy,u)} = \mathrm{Rat}(\ux,\uy,P(\ux,\uy,u)),
\end{equation}
where $\mathrm{Rat}$ is a rational function. Indeed, from~\eqref{eq: Param barM with P, quadric case}, we know that $\bar{M}_\circ$ is a rational function in $\ux$, $\uy$ and $P$. Then, differentiating~\eqref{eq: Algebraic equation for P} with respect to $u$, we get that there exist two polynomials $Q_1,Q_2\in \mathbb{Q}[\ux,\uy,p]$ such that: 
\[
\partial_u P(\ux,\uy,u) \cdot Q_1(\ux,\uy,P(\ux,\uy,u))= Q_2(\ux,\uy,P(\ux,\uy,u)),
\]
and~\eqref{eq:rational} follows.

By an elementary change of variables, and using that $P(\ux,\uy,0)=0$, we obtain: 
\[
M_\circ(u) = \int_0^u \mathrm{Rat}(\ux,\uy,P(\ux,\uy,v))\partial_vP(\ux,\uy,v)dv = \int_0^{P(\ux,\uy,u)} \mathrm{Rat}(p) dp.
\]

The integral can be computed explicitly, and it turns out that it still belongs to $\mathbb{Q}(x_2,x_4,y_2,y_4,P(u))$, see the Maple companion file~\cite{Maple}. We can further simplify this expression using \eqref{eq: Algebraic equation for P}, which concludes the proof. 
\end{proof}

Following exactly the same chain of arguments, we can specialize this result to maps rooted at a vertex of degree 4. The only difference is to replace~\eqref{eq: Param barM with P, quadric case} by the following appropriate expression for the generating series $\bar{M}_{\circ,4}$ of bipartite plane maps rooted on a white vertex of degree 4:
\begin{equation}\label{eq: Param barM with P, quadric case, root degree 4}
	\bar{M}_{\circ,4}(u) =  2x_4P^2\left(y_4P + 3\left(\frac{y_2 + 3x_2y_4P }{1-9x_4y_4P^2}\right)^2\right).
\end{equation}
We obtain:
\begin{prop}[see also Proposition 22 of \cite{BousquetMelouSchaeffer_Bipartite}]\label{prop: Algebraic parametrization for M4(u)}
The generating function of rooted bipartite planar maps with a white root vertex of degree 4 -- denoted by  $M_{\circ,4}$ -- can be expressed in terms of $P$ as follows:
\begin{equation}
	M_{\circ,4}(x_2,x_4,y_2,y_4,u) = \frac{\mathrm{Pol}(x_2,x_4,y_2,y_4,u,P)}{9 x_{4} \left(9 P^{2} x_{4} y_{4}-1\right)}.
\end{equation}
where $\mathrm{Pol}(x_2,x_4,y_2,y_4,u,p)\in\mathbb{Q}\left[x_2,x_4,y_2,y_4,u,p\right]$ is defined as follows: 
\begin{align*}
&\mathrm{Pol}(x_2,x_4,y_2,y_4,u,p) \\
&\quad\coloneqq  1215 {x_{4}}^{4} {y_{4}}^{3}  p^{8}		-540  {x_{4}}^{3} {y_{4}}^{2} p^{6} 	+27 {x_{4}}^{2} {y_{4}}^{2} \left(12 x_{4}u-{x_{2}}^{2}\right) p^{5}		+18 {x_{4}}^{2} y_{4} \left(1-x_{2} y_{2}\right) p^{4}\\
&\qquad	 +6 x_{4} y_{4} \left(12 x_{4}u-5 {x_{2}}^{2}\right) p^{3} 	+\left(45 {x_{2}}^{2} x_{4} y_{4}u +3 x_{4} -3 {x_{2}}^{4} y_{4}-6 x_{4} x_{2} y_{2}-81 x_{4}^{2} y_{4}u^2\right) p^{2}	\\
&\qquad	+\left(1-x_{2} y_{2}\right) \left({x_{2}}^{2}-12 x_{4}u\right) p+ u\left(9 x_{4}u-{x_{2}}^{2}\right).
\end{align*}
\end{prop}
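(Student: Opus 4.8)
The plan is to transcribe, essentially word for word, the proof of Proposition~\ref{prop: Algebraic parametrization for M(u)}, the only adjustment being that we follow a white root vertex of degree exactly $4$. First I would record the parametrization of the plane series $\bar M_{\circ,4}$ in terms of $P$: specializing Proposition~\ref{prop:genSeries_bipartite_wellcharged} to the contribution of a degree-$4$ root amounts to keeping only the summand $\left[\xi^0\right]x_4(\xi+B(u,\xi))^4$, and substituting the expressions of $B_{-3}$, $B_{-1}$ and $B_1$ in terms of $P$ already computed in the proof of Proposition~\ref{prop: Algebraic parametrization for M(u)} gives the closed form~\eqref{eq: Param barM with P, quadric case, root degree 4}. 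In particular $\bar M_{\circ,4}$ is a rational function of $x_2,x_4,y_2,y_4$ and $P$.

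Next, for the same reason as in~\eqref{eq:dMbar} (marking a face is the operator $\partial_u$, and the local condition defining the subclass involves neither faces nor $u$), one has $\bar M_{\circ,4}=\partial_u M_{\circ,4}$; since every planar map has at least one face, $M_{\circ,4}$ has no constant term as a polynomial in $u$, hence $M_{\circ,4}(u)=\int_0^u\bar M_{\circ,4}(v)\,dv$. Differentiating the defining equation~\eqref{eq: Algebraic equation for P} with respect to $u$ shows, after using~\eqref{eq: Algebraic equation for P} again to eliminate $u$, that $\partial_uP$ is a rational function of $x_2,x_4,y_2,y_4$ and $P$; combined with~\eqref{eq: Param barM with P, quadric case, root degree 4} this yields $\bar M_{\circ,4}(v)/\partial_vP(v)=\mathrm{Rat}(x_2,x_4,y_2,y_4,P(v))$ for some rational function $\mathrm{Rat}$. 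An elementary (formal) change of variables $p=P(v)$, using $P(\ux,\uy,0)=0$, then turns the integral into $M_{\circ,4}(u)=\int_0^{P(u)}\mathrm{Rat}(p)\,dp$.

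It then remains to evaluate this antiderivative and to rewrite it, with the help of the polynomial relation~\eqref{eq: Algebraic equation for P} satisfied by $P$, in the announced compact form $\mathrm{Pol}(x_2,x_4,y_2,y_4,u,P)/\big(9x_4(9P^2x_4y_4-1)\big)$; this last step is carried out symbolically in the Maple companion~\cite{Maple}. The single genuinely non-formal point --- and the one I expect to be the crux --- is the assertion that $\int\mathrm{Rat}(p)\,dp$ is itself rational in $p$: a priori the partial-fraction expansion of $\mathrm{Rat}$ could produce logarithmic terms, and rationality of the primitive holds exactly because all residues of $\mathrm{Rat}$ vanish. Once this residue cancellation is verified and the resulting rational function is reduced modulo~\eqref{eq: Algebraic equation for P}, one obtains the stated expression (note that, in contrast with $M_\circ$, here the integration leaves a genuine denominator); everything else is a routine repetition of the argument used for Proposition~\ref{prop: Algebraic parametrization for M(u)}.
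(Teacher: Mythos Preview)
Your proposal is correct and follows essentially the same approach as the paper: the paper's own treatment of this proposition consists of the single sentence ``Following exactly the same chain of arguments [as for Proposition~\ref{prop: Algebraic parametrization for M(u)}] \ldots\ The only difference is to replace~\eqref{eq: Param barM with P, quadric case} by~\eqref{eq: Param barM with P, quadric case, root degree 4}'', with the explicit computation deferred to the Maple companion. Your remark about the vanishing of residues being the genuine non-formal point is a welcome clarification that the paper leaves implicit (it simply says ``it turns out that'' the primitive is rational).
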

Note that this parametrization differs from the one in \cite{BousquetMelouSchaeffer_Bipartite}, which was obtained using a different ``unpointing''. Still both  are equivalent, and one can be derived from the other by using~\eqref{eq: Algebraic equation for P}.

	\subsection{The Ising model in the quartic case}\label{sec: Lagragian Ising Quartic}
In this section, we deal with white-rooted quartic planar maps endowed with a spin configuration, i.e. we set $x_k=y_k=0$ for all $k\neq 4$. To simplify notation, we write $x=x_4$, $y=y_4$ and $I_\circ(x,y,t,\nu,u)=I_\circ(\ux,\uy,t,\nu,u)$ with a slight abuse of notation. 
We establish the following explicit Lagrangian rational parametrization for $I_\circ(u)$ in terms of a new formal power series with \emph{non-negative} coefficients:

\begin{theo}\label{theo: Lagragian parametrization Ising GF}
The generating function of white-rooted quartic planar maps endowed with a spin configuration is given by:
\begin{equation}\label{eq: I(u) in terms of S(U)}
	I_\circ(x,y,t,\nu,u)=\frac{\mathrm{Pol}_{I_\circ}(x,y,t,\nu,u,Q)}{9\left(1-\nu^{2}\right)t^{4} \Big(1+3x\left(1-\nu^{2}\right)Q\Big)},
\end{equation}
where the series $Q\equiv Q(x,y, t,\nu,u)$ is the unique formal power series in $\Q(x,y,\nu,u)\llbracket t \rrbracket$ with constant term 0 that satisfies the Lagrangian equation:
\footnotesize
\begin{equation}\label{eq: Lagrangian equation for S(U)}
	t^2=\frac{Q  \left(1-3 \nu^{2} \left(x +y \right) Q -3 x y \left(1-\nu^{2}\right) \left(3 \nu^{2}+7\right) Q^{2}+135 x^{2} y^{2} \left(1-\nu^{2}\right)^{3} Q^{4}-243 x^{3} y^{3} \left(1-\nu^{2}\right)^{5} Q^{6}\right)}{u \, {\left(1-9 x y \left(1-\nu^{2}\right)^{2} Q^{2}\right)}^{2}},
\end{equation}
\normalsize
and where the polynomial $\mathrm{Pol}_{I_\circ}(x,y,t,\nu,u,q)\in\mathbb{Q}\left[x,y,t,\nu,u,q\right]$ is defined as follows: 
\scriptsize
\begin{align*}
&\mathrm{Pol}_{I_\circ}(x,y,t,\nu,u,q) \\
&\quad\coloneqq 405x^{3} y^{2}  \left(1-\nu^{2}\right)^{4}q^{7}+351x^{2} y^{2} \left(1-\nu^{2}\right)^{3} q^{6} +27xy\left(1-\nu^{2}\right)^{2}  \left(\nu^{2} y-\left(5+12 t^{2} u \left(1-\nu^2 \right)^{2} y\right) x\right) q^{5} \\
&\quad +3xy\left(1-\nu^{2}\right)\left(36 t^{2} u  \left(1-\nu^{2}\right)^{2}x-3\nu^{2}-47\right) q^{4}+\left(\left(252 t^{2} u  \left(1-\nu^{2}\right)^{2}y-6\nu^{2}-9\right) x-15 \nu^{2} y\right) q^{3}\\
&\quad +\left(\left(36t^{2} u  \left(1-\nu^{2}\right)-108t^{4} u^{2} y  \left(1-\nu^{2}\right)^{3}\right) x+5+9\nu^{2} t^{2} u  \left(1-\nu^{2}\right)y\right) q^{2}-t^{2} u \left(27 t^{2} u  \left(1-\nu^{2}\right)^{2}x-3\nu^{2}+8\right) q\\
&\quad +3t^{4} u^{2} \left(1-\nu^{2}\right).
\end{align*}
\normalsize
Moreover, the series $Q$ has non-negative integer coefficients. 
\end{theo}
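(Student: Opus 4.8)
## Proof proposal for Theorem~\ref{theo: Lagragian parametrization Ising GF}

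The plan is to leverage the combinatorial interpretation established in Proposition~\ref{prop: Psquare as the generating function of maps endowed with a spin configuration}, together with the algebraic parametrization of $M_{\circ,4}$ from Proposition~\ref{prop: Algebraic parametrization for M4(u)} and the change of variables $\Theta$. The rational formula~\eqref{eq: I(u) in terms of S(U)} and the Lagrangian equation~\eqref{eq: Lagrangian equation for S(U)} should follow from a (computer-assisted) change of variables applied to the already-established parametrizations, so the real mathematical content — the part I would expect to be the main obstacle — is the \emph{positivity} of the coefficients of $Q$.

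First I would pin down what $Q$ is combinatorially. By Claim~\ref{claim: Theta Msquare and Ising}, $M_\circ^\sq(u) = \Theta(I_\circ(u))$, and by Proposition~\ref{prop: Psquare as the generating function of maps endowed with a spin configuration}, the series $P^\sq$ (obtained from $P = u(1+B_1)$ via the substitution $x_k\mapsto x_k t^{k/2}+\nu\mathbf 1_{k=2}$, $y_k\mapsto y_k t^{k/2}+\nu\mathbf 1_{k=2}$) satisfies $\Theta(R) = t\,P^\sq$, where $R$ is the generating function of the maps-with-spin in the family $\mathcal R$. Specializing to the quartic case ($x_k=y_k=0$ for $k\neq 4$), the substitution~\eqref{eq: Change variables M square} sends $x_4\mapsto x t^2$, $y_4\mapsto y t^2$, $x_2,y_2\mapsto \nu$, and the algebraic equation~\eqref{eq: Algebraic equation for P} for $P$ becomes, after applying $\Theta$ (i.e. $t\mapsto t/(1-\nu^2)$), an algebraic equation for $P^\sq$. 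I would then define $Q$ by a suitable normalization, e.g. $Q := (P^\sq - u)/(u\,t^2)$ or a closely related affine-rational rescaling of $B_1^\sq$ chosen so that $Q$ has constant term $0$ in $t$; plugging this into the transformed equation for $P^\sq$ and isolating $t^2$ is exactly a Lagrangian inversion, and a direct Maple computation (as in~\cite{Maple}) should produce~\eqref{eq: Lagrangian equation for S(U)}. The fact that the equation can be put in the form $t^2 = Q\cdot \Phi(Q)/u$ with $\Phi$ rational in $Q$ with $\Phi(0)=1$ guarantees $Q\in\Q(x,y,\nu,u)\llbracket t\rrbracket$ is well-defined and unique.

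Next, for the rational expression~\eqref{eq: I(u) in terms of S(U)}: starting from Proposition~\ref{prop: Algebraic parametrization for M4(u)}, $M_{\circ,4}$ is a rational function of $x_2,x_4,y_2,y_4,u$ and $P$; I would apply the substitution~\eqref{eq: Change variables M square} and the change of variables $\Theta^{-1}$ (noting $I_\circ = \Theta^{-1}(M_\circ^\sq)$ in the quartic, degree-4-rooted case) to obtain $I_\circ$ as a rational function of $x,y,t,\nu,u$ and $P^\sq$, hence of $Q$ after the affine substitution. Simplifying modulo the algebraic relation satisfied by $P^\sq$ (equivalently by $Q$) yields the displayed formula; again this is a routine but lengthy elimination best left to the companion file. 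The key structural point to check here is that the denominator $1+3x(1-\nu^2)Q$ and the numerator polynomial $\mathrm{Pol}_{I_\circ}$ come out with the stated shape, which I would verify by direct computation rather than by hand.

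The hard part is the final sentence: $Q$ has non-negative integer coefficients. This does \emph{not} follow from $\Theta^{-1}$ being applied naively, since (as the remark after Claim~\ref{claim: Theta Msquare and Ising} warns) $\Theta^{-1}$ is not combinatorially positive in general. The right approach is to give $Q$ a direct combinatorial meaning. By Proposition~\ref{prop: Psquare as the generating function of maps endowed with a spin configuration}, $\Theta^{-1}(B_1^\sq)$ — equivalently, up to the normalization defining $Q$ — is the generating function of the family $\mathcal R$ of rooted quartic (in the relevant degrees) planar maps endowed with a spin configuration, with a degree-one white root and degree-one black marked vertex, counted by $t^{\#E}\nu^{m}u^{\#F}x^{\#V_\circ}y^{\#V_\bullet}$ after stripping the weights of $\rho$ and $\tau$. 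Since this is manifestly a generating function of a combinatorial class with monomial weights, all its coefficients are non-negative integers. I would therefore (i) verify that the affine-rational substitution relating $Q$ to $B_1^\sq$ — which I must choose carefully — only involves multiplication by monomials and addition of series with non-negative coefficients, so that positivity is preserved; concretely, if $Q = \Theta^{-1}(B_1)/(\text{monomial})$ works out with the monomial dividing every term, positivity transfers directly, and (ii) confirm integrality by noting that each map contributes $\pm 1$... more precisely $+1$... to each monomial, so coefficients are sums of $1$'s. The delicate bookkeeping — making the normalization of $Q$ line up exactly so that $\Theta^{-1}(B_1)$ divided by the chosen power of $t$ (and $u$, $x$, $y$, $\nu$) is precisely $Q$ with no spurious denominators — is where I expect to spend the most care; once that is done, positivity and integrality are immediate from the combinatorial model.
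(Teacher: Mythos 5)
Your proposal is correct and follows essentially the same route as the paper: the rational formula and the Lagrangian equation are obtained by applying $\Theta^{-1}$ and the quartic specialization to Propositions~\ref{prop: Algebraic parametrization for M(u)} and~\ref{prop: Algebraic parametrization for M4(u)} (with the eliminations done in the Maple companion), and positivity comes from the combinatorial interpretation of Proposition~\ref{prop: Psquare as the generating function of maps endowed with a spin configuration}. The one point you leave open, the exact normalization of $Q$, is resolved in the paper by setting $Q \coloneqq t\,\Theta^{-1}\big(t\,P^\sq\big)$, which by that proposition equals $t\,R$ with the quartic specialization, so positivity and integrality transfer directly from the generating function $R$ of the class $\mathcal{R}$, exactly as you anticipated.
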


\begin{proof}
All the computations performed for this proof are available in the Maple companion file~\cite{Maple}. 
We first observe that, in this setting, $M_\circ^\sq(\ux,\uy,t,\nu,u)=M_{\circ,4}(\nu,xt^2,\nu,yt^2,u)$. By Claim \ref{claim: Theta Msquare and Ising}, we obtain the following equality of generating series: 
\begin{equation*}
	I_\circ(x,y,t,\nu,u)=\Theta^{-1}\Big(M_{\circ,4}(\nu,xt^2,\nu,yt^2,u)\Big).
\end{equation*}
We can then apply the change of variables $\Theta^{-1}$ to the expression of $M_{\circ,4}$ given in Proposition~\ref{prop: Algebraic parametrization for M4(u)}, to obtain a rational expression for $I_\circ$ in terms of $x,y,\nu$ and $Q$, where $Q$ is defined by: 
\begin{equation*}
	Q \coloneqq t \, \Theta^{-1}\Big(t \, P (\nu,xt^2,\nu,yt^2, u)\Big) = t \, \Theta^{-1}\Big(t \, P^\sq (x, y ,t, \nu , u)\Big).
\end{equation*}
The fact that $Q$ satisfies Equation~\eqref{eq: Lagrangian equation for S(U)} follows directly from applying the change of variables $\Theta^{-1}$ in the equation verified by $P$ given in Proposition~\ref{prop: Algebraic parametrization for M(u)}. Finally Proposition~\ref{prop: Psquare as the generating function of maps endowed with a spin configuration} establishes that
\begin{equation*}
	Q = t\, R(\ux,\uy,t,\nu,u),
\end{equation*}
with the specialization $x_k=y_k=0$ for all $k\neq 4$. Thus, $Q$ is also the generating function of a family of Ising maps, and its coefficients are non-negative integers.
\end{proof}

\appendix

\section{Geodesic properties of \texorpdfstring{$\alpha_d$}{alpha-d}-orientations and connection with Eulerian orientations}\label{AppA}
In this appendix, we further explore the properties of $\alpha_d$-orientations on bipartite maps. We begin by examining their connection to the dual directed geodesic labeling, and then we show how they generalize the well-known Eulerian and quasi-Eulerian orientations.

\subsection{Geodesic properties of \texorpdfstring{$\alpha_d$}{alpha-d}-orientations}
		
\subsubsection{Directed geodesic labeling}\label{sub:directed_geodesic_labeling}
A \emph{directed} planar map is a planar map in which every edge has a direction, in other words it is the planar embedding of a \emph{directed} graph. Let $\rm$ be a directed map that is strongly connected, meaning that there exists a directed path between every pair of vertices. We can then define a \emph{directed quasi-distance} on $\rm$ as follows: For two vertices $u,v\in \mathrm{V}(\rm)$, the directed distance -- denoted by $\vec{d}(u,v)$ --  from $u$ to $v$ is defined as the length of a shortest directed path from $u$ to $v$. 

Additionally if $\rm$ is pointed at $v_\star$, then the \emph{(directed) geodesic labeling} $\ell : \mathrm{V}(\rm) \rightarrow \mathbb{Z}_{\geq 0}$ is defined by $\ell(v)=\vec{d}(v_\star,v)$ for any $v\in \mathrm{V}(\rm)$. The geodesic labeling also admits the following characterization: 
\begin{prop}\label{prop: Characterization geodesic labeling}
Let $(\rm,v_\star)$ be a pointed directed planar map. Consider a labeling of its vertices ${\ell:\mathrm{V}(\rm)\rightarrow\Z_{\geq 0}}$, such that:
\begin{enumerate}[1.]
	\item $\ell(v_\star)=0$,
	\item for any $v\in \mathrm{V}(\rm)\backslash\{v_\star\}$, there exists a directed edge from a vertex $u$ to $v$ in $\rm$ such that ${\ell(v)=\ell(u)+1}$,
	\item for any directed edge from $u$ to $v$ in $\rm$, $\ell(v) \leq \ell(u) +1$.
\end{enumerate}
Then $\ell$ is the directed geodesic labeling of $\rm$.
\end{prop}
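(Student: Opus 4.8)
The statement to prove is Proposition~\ref{prop: Characterization geodesic labeling}: a labeling $\ell$ of a strongly connected pointed directed map satisfying conditions 1--3 must coincide with the geodesic labeling $v\mapsto \vec d(v_\star,v)$. The plan is to show a two-sided inequality $\ell(v)=\vec d(v_\star,v)$ for every vertex $v$, treating the two directions separately, and to do each by induction on the relevant quantity.

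\textbf{Step 1: $\ell(v)\le \vec d(v_\star,v)$ for all $v$.} I would argue by induction on $n=\vec d(v_\star,v)$. The base case $n=0$ is exactly condition~1, since $v=v_\star$. For the inductive step, pick a shortest directed path $v_\star=u_0\to u_1\to\cdots\to u_n=v$; then $\vec d(v_\star,u_{n-1})=n-1$ (a subpath of a geodesic is a geodesic), so by the induction hypothesis $\ell(u_{n-1})\le n-1$. Applying condition~3 to the directed edge $u_{n-1}\to u_n$ gives $\ell(v)=\ell(u_n)\le \ell(u_{n-1})+1\le n$, as desired. Strong connectedness is what guarantees $\vec d(v_\star,v)$ is finite so the induction is well-founded.

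\textbf{Step 2: $\ell(v)\ge \vec d(v_\star,v)$ for all $v$.} Here I would induct on the value $m=\ell(v)$. If $m=0$, I first note that condition~2 forbids $v\ne v_\star$ from having label $0$ (it would require an in-edge from a vertex of label $-1$, impossible since labels are in $\mathbb{Z}_{\ge 0}$); hence $v=v_\star$ and $\vec d(v_\star,v)=0$. For the inductive step with $\ell(v)=m\ge 1$, condition~2 provides a directed edge $u\to v$ with $\ell(u)=m-1$; by the induction hypothesis $\vec d(v_\star,u)\le m-1$, and concatenating a geodesic from $v_\star$ to $u$ with the edge $u\to v$ yields a directed path to $v$ of length $\le m$, so $\vec d(v_\star,v)\le m=\ell(v)$.

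\textbf{Main obstacle.} There is essentially no hard analytic or combinatorial obstacle: the proposition is a clean well-foundedness argument, and the planarity of $\rm$ plays no role — only strong connectedness (used to ensure finiteness of directed distances) and the nonnegativity of the labels (used in the $m=0$ case of Step~2). The only point requiring a little care is making sure both inductions are genuinely well-founded: in Step~1 one inducts on $\vec d(v_\star,v)$, which is finite by strong connectedness, and in Step~2 one inducts on $\ell(v)\in\mathbb{Z}_{\ge 0}$, which is manifestly well-ordered. Combining Steps~1 and~2 gives $\ell(v)=\vec d(v_\star,v)$ for all $v$, i.e. $\ell$ is the directed geodesic labeling, completing the proof.
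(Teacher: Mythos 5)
Your proof is correct and follows essentially the same route as the paper, which observes that conditions 2 and 3 force $\ell(v)=\min\{\ell(u)+1 : \overrightarrow{uv}\in\rm\}$ and then concludes by induction on $\ell(v)$. Your version simply makes the two inequalities explicit (induction on $\vec d(v_\star,v)$ for the upper bound, induction on $\ell(v)$ for the lower bound), which spells out in full the "immediate induction" the paper leaves to the reader.
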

\begin{proof}
	First, the geodesic labeling on $\rm$ clearly satisfies conditions 1, 2 and 3. Reciprocally, $2.$ and $3.$ together imply that, for any $v\in \mathrm{V}(\rm)\backslash \{v_\star\}$, we have:
	\[
		\ell(v) = \min \{\ell(u)+1, \text{ where }u\text{ is such that }\overrightarrow{uv} \in \rm\}.
	\]
	Then, the conclusion follows by an immediate induction on the value of $\ell(v)$.
\end{proof}

	\subsubsection{Geodesic properties of \texorpdfstring{$\alpha_d$}{alpha-d}-orientations}
In this section, we prove that the minimal $\alpha_d$-orientations of a bipartite map are deeply related to the geodesic labeling on its dual (Eulerian) map, see Figure~\ref{fig: Correspondence alpha-orientation and geodesic labeling} for an illustration. 

\begin{prop}\label{prop: Correspondence alpha-orientation and geodesic labeling}
Let $\rm$ be a rooted bipartite plane map.
Fix $d\geq\Delta_\circ(\rm)$ and let $\cO$ be its minimal $\alpha_d$-orientation. 

Let $(\rm^\dagger,v_\star^\dagger)$ be the dual map of $\rm$, which is pointed at the vertex $v_\star^\dagger$ corresponding to the outer face of $\rm$. Recall from Section~\ref{sub:duality} that the edges of $\rm^\dagger$ are canonically oriented by requiring that white faces are on their left, and write $\ell:\rV(\rm^\dagger)\rightarrow\mathbb{Z}_{\geq 0}$ for the directed geodesic labeling on $(\rm^\dagger,v_\star^\dagger)$.

Then, for any directed edge $e^\dagger=(v_1^\dagger, v_2^\dagger)\in \mathrm{E}(\rm^\dagger)$, we have:
\begin{equation}\label{eq: Correspondence alpha-orientation and geodesic labeling}
  \ell(v_1^\dagger)-\ell(v_2^\dagger) = \cO_\circ(e) - 1,
\end{equation}  
where $e\in \mathrm{E}(\rm)$ is the dual edge of $e^\dagger$, and where $\cO_\circ(e)$ stands for the orientation of the half-edge of $e$ incident to its white endpoint.
\end{prop}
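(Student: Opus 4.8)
The plan is to verify that the function $\ell$ defined through the minimal $\alpha_d$-orientation via equation \eqref{eq: Correspondence alpha-orientation and geodesic labeling} satisfies the three characterizing conditions of Proposition~\ref{prop: Characterization geodesic labeling}, applied to the pointed directed map $(\rm^\dagger, v_\star^\dagger)$. More precisely, I would first show that \eqref{eq: Correspondence alpha-orientation and geodesic labeling} consistently defines \emph{some} integer-valued function $\ell$ on $\rV(\rm^\dagger)$ (up to an additive constant, which is then pinned down by the normalization $\ell(v_\star^\dagger)=0$), and then check conditions 2 and 3. By uniqueness in Proposition~\ref{prop: Characterization geodesic labeling}, this $\ell$ must be the directed geodesic labeling, which is exactly the claim.

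\textbf{Step 1 (well-definedness).} Fix a face $f$ of $\rm$, i.e.\ a vertex $v_f^\dagger$ of $\rm^\dagger$. Going around the boundary of $f$, the incident edges alternate (since $\rm$ is bipartite, $f$ has even degree $2p$) between edges entering $v_f^\dagger$ and edges leaving $v_f^\dagger$ in the canonical orientation of $\rm^\dagger$; equivalently, walking along $\partial f$ one alternately sees the white endpoint then the black endpoint of the successive edges of $\rm$. The quantity $\sum_{h \sim v_f^\dagger}(\text{signed contribution})$ around $\partial f$, where each edge contributes $\pm(\cO_\circ(e)-1)$ according to its direction relative to $v_f^\dagger$, telescopes around the face and must vanish: this is precisely the statement that $\out(v)+\ind(v)=(d+1)\deg(v)$ combined with the definition $\alpha_d(v)=\deg(v)$ at white vertices, read around the face $f$. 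Concretely, the sum of $\cO_\circ(e)$ over the white endpoints met along $\partial f$ minus the sum over the black endpoints met along $\partial f$ equals $0$ because each white endpoint of an edge on $\partial f$ is also, traversing $\partial f$ in the white-face-on-the-left convention, accounted for at the incident white vertex whose out-degree is its degree. Hence the "discrete $1$-form" $e^\dagger \mapsto \cO_\circ(e)-1$ on the directed graph $\rm^\dagger$ has zero circulation around every face, so on the simply connected sphere (with the chosen outer face removed to get the plane) it is the coboundary of a vertex function $\ell$, unique once we set $\ell(v_\star^\dagger)=0$. This gives condition 1 for free.

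\textbf{Step 2 (conditions 2 and 3 via accessibility and minimality).} Condition 3 says $\ell(v_2^\dagger)\le \ell(v_1^\dagger)+1$ for every directed edge $(v_1^\dagger,v_2^\dagger)$, i.e.\ $\cO_\circ(e)-1\le 1$, i.e.\ $\cO_\circ(e)\le 2$; but in fact, since $d\ge\Delta_\circ(\rm)$, Property~\ref{claim: Saturated edges are from black to white endpoints} (and more precisely $\cO_\circ(e)=\cO(h)$ with $h$ incident to a white vertex, so $\cO(h)\le\deg(v_\circ)\le\Delta_\circ(\rm)\le d$) only bounds $\cO_\circ(e)\le d$, which is too weak. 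The correct reading is to orient $e^\dagger$ canonically; then $\ell(v_1^\dagger)-\ell(v_2^\dagger)=\cO_\circ(e)-1$, and condition 3 for the edge in the direction $v_1^\dagger\to v_2^\dagger$ needs $\cO_\circ(e)\ge 0$ — true always — while for the reverse direction we need $\cO_\circ(e)\le 2$; but the reverse edge $v_2^\dagger\to v_1^\dagger$ is not an edge of the canonically oriented $\rm^\dagger$, so condition 3 only concerns canonically oriented edges and amounts to $\cO_\circ(e)\ge 0$, which holds. Condition 2 says every non-root $v_2^\dagger$ has an incoming canonical edge $e^\dagger=(v_1^\dagger,v_2^\dagger)$ with $\ell(v_1^\dagger)=\ell(v_2^\dagger)-1$, i.e.\ with $\cO_\circ(e)=0$, i.e.\ with $e$ \emph{saturated} from its black to its white endpoint. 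Here is where minimality enters: I would argue that in the minimal $\alpha_d$-orientation, every face of $\rm$ other than the outer face has at least one incident saturated edge oriented so that this face lies appropriately — otherwise one could find a counterclockwise cycle in the primal (an "all-forward" facial-type cycle) contradicting minimality, or equivalently, the minimal orientation has no counterclockwise cycle in $\rm^\dagger$'s dual sense. I would make this precise using the standard fact (implicit in Proposition~\ref{prop: Unique minimal orientation}) that minimality of $\cO$ is equivalent to the non-existence of a "clockwise-saturated" facial circuit, together with accessibility of $\cO$ from Property~\ref{claim: Unique minimal accessible alpha d orientation for a bipartite map}, which guarantees saturated edges propagate reachability toward $v_\star^\dagger$.

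\textbf{Main obstacle.} The delicate point is Step 2, condition 2: translating "$\cO$ is minimal and accessible" into "every dual vertex $\ne v_\star^\dagger$ has an incoming canonical dual edge decreasing $\ell$ by exactly $1$". I expect this to require a careful argument identifying counterclockwise cycles of $(\rm,\cO)$ with cycles in $\rm^\dagger$ along which $\ell$ fails to decrease, so that minimality forbids "local minima" of $\ell$ away from the pointed vertex; accessibility then ensures $v_\star^\dagger$ is the unique global minimum. An alternative, possibly cleaner route — which I would pursue if the direct verification gets unwieldy — is to define $\ell$ directly as the dual directed geodesic labeling, then \emph{construct} an $\alpha_d$-orientation out of it via \eqref{eq: Correspondence alpha-orientation and geodesic labeling} (checking the out-degree condition face by face as in Step 1, read backwards), verify it is accessible and minimal by hand, and invoke the uniqueness in Property~\ref{claim: Unique minimal accessible alpha d orientation for a bipartite map} to conclude it coincides with $\cO$. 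The two routes are dual to each other; I would present whichever makes the minimality bookkeeping lightest, likely the second.
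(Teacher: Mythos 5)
Your overall architecture is the same as the paper's: show that \eqref{eq: Correspondence alpha-orientation and geodesic labeling} consistently defines a labeling normalized by $\ell(v_\star^\dagger)=0$, then verify the three conditions of Proposition~\ref{prop: Characterization geodesic labeling}, with condition 3 coming from $\cO_\circ(e)\geq 0$ and condition 2 from minimality combined with Property~\ref{claim: Saturated edges are from black to white endpoints}. However, your Step 1 checks the circulation around the wrong family of cycles. For the $1$-form $e^\dagger\mapsto \cO_\circ(e)-1$ on $\rm^\dagger$ to be a coboundary, you must show it has zero circulation around each \emph{face of $\rm^\dagger$}, i.e.\ around each \emph{vertex} of $\rm$; instead you fix a face $f$ of $\rm$ (a vertex $v_f^\dagger$ of $\rm^\dagger$) and sum the signed contributions of the dual edges incident to $v_f^\dagger$. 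That sum runs over the star of a vertex of $\rm^\dagger$, which is not a cycle: for a coboundary $\ell$ it equals $\sum_i \ell(w_i)-\deg(f)\,\ell(v_f^\dagger)$ rather than telescoping to zero, and it does not follow from the degree conditions. The correct computation (the paper's Lemma~\ref{lem:labeling_independent}) is: for a white vertex $v_\circ$ of $\rm$, the contour of the corresponding white face of $\rm^\dagger$ is traversed consistently with the canonical orientation and $\sum_{e\ni v_\circ}\bigl(\cO_\circ(e)-1\bigr)=\out(v_\circ)-\deg(v_\circ)=\alpha_d(v_\circ)-\deg(v_\circ)=0$; for a black vertex one uses the $(d+1)$-fractional relation $\sum_{e\ni v_\bullet}\cO_\circ(e)=(d+1)\deg(v_\bullet)-\alpha_d(v_\bullet)=\deg(v_\bullet)$.

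Your sketch of condition 2 is the right idea and, contrary to your worry, needs neither accessibility nor any analysis of local minima of $\ell$: if no boundary edge of the face $f_2$ were saturated with $f_2$ on its right, then that boundary, directed with $f_2$ on its left, would be a forward counterclockwise cycle, contradicting minimality; the resulting saturated edge is black-to-white by Property~\ref{claim: Saturated edges are from black to white endpoints} (this is exactly where $d\geq\Delta_\circ(\rm)$ is used), hence has $\cO_\circ=0$, and its dual is an incoming edge at $v_2^\dagger$ along which $\ell$ increases by exactly one. This is precisely the paper's two-line argument. Your proposed alternative route (build an orientation from the geodesic labeling and invoke uniqueness in Property~\ref{claim: Unique minimal accessible alpha d orientation for a bipartite map}) is viable but not lighter: you would still need the upper bound $\ell(v_1^\dagger)-\ell(v_2^\dagger)\leq d$ to get a valid $(d+1)$-fractional orientation, and then accessibility and minimality by hand.
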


\begin{figure}
  	\centering
  	\includegraphics[width=0.4\linewidth ,page=5]{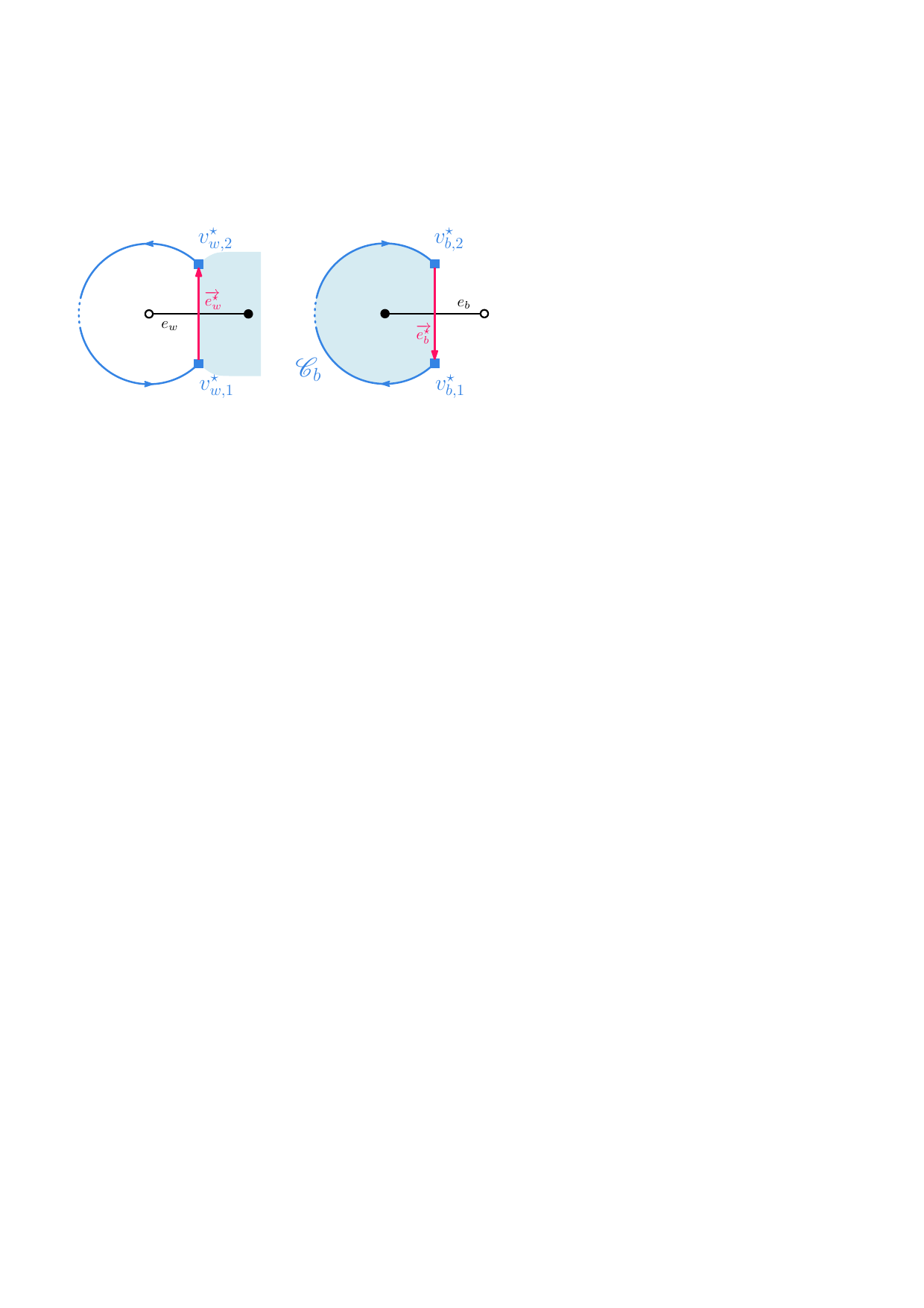}
  	\includegraphics[width=0.4\linewidth ,page=6]{AlphaToGeodesic.pdf}
 	\caption{\label{fig: Correspondence alpha-orientation and geodesic labeling} Correspondence between the minimal $\alpha_d$ orientation and the dual geodesic labeling (left), and an example of a rooted bipartite plane map - drawn in black - and of its dual rooted pointed Eulerian map - drawn in blue (right).}
\end{figure}

\begin{rema}
This proposition is closely related to~\cite[Lemma~18]{BernardiFusy_Hypermaps}, where the authors introduced a family of orientations on Eulerian maps (called hypermaps), and prove a characterization of these orientations similar to Proposition~\ref{prop: Characterization geodesic labeling}, but in the dual setting. 
\end{rema}

\begin{rema}\label{rem:proofclaim}
Note that Proposition~\ref{prop: Characterization geodesic labeling} gives another proof of Property~\ref{claim:stability for orientations}. In fact, for minimal orientations, the value of the orientation of half-edges incident to white vertices does not depend on $d$, as long as $d\geq \Delta_\circ$; and the value of the orientation of half-edges incident to black vertices can be deduced immediately.
\end{rema}


\begin{proof}[Proof of Proposition~\ref{prop: Correspondence alpha-orientation and geodesic labeling}]
The proof of the proposition proceeds in two steps. First, we prove that the conditions given in~\eqref{eq: Correspondence alpha-orientation and geodesic labeling} are consistent; i.e. that there exists a labeling $\tilde{\ell}$ that verifies all of them (and such that $\tilde{\ell}(v_\star^\dagger)=0$). Second, we show that $\tilde{\ell}$ satisfies all the conditions of Proposition~\ref{prop: Characterization geodesic labeling}, and thus coincides with the dual geodesic labeling. 

To address the first point, fix a spanning tree $\rt$ of $\rm^\dagger$. Set $\tilde{\ell}(v_\star^\dagger) = 0$, and extend the labeling to each vertex $v^\dagger \in \rV(\rm^\dagger)$ by successively applying Equation~\eqref{eq: Correspondence alpha-orientation and geodesic labeling} along the path from the pointed vertex $v_\star^\dagger$ to $v^\dagger$ in the spanning tree $\rt$. We can prove that: 

\begin{lemm}\label{lem:labeling_independent}
The labeling $\tilde{\ell}$ obtained does not depend on the choice of $\rt$.
\end{lemm}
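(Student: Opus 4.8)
The plan is to read the defining relation~\eqref{eq: Correspondence alpha-orientation and geodesic labeling} as the statement that the $1$-cochain $\omega$ on $\rm^\dagger$ assigning the value $\cO_\circ(e)-1$ to each directed edge $e^\dagger=(v_1^\dagger,v_2^\dagger)$ admits a primitive, i.e. is the coboundary of a function on $\rV(\rm^\dagger)$. Indeed, extending $\tilde\ell$ along the paths of a spanning tree $\rt$ amounts to integrating the increments $1-\cO_\circ(e)$ along tree paths, so two spanning trees (or more generally two paths from $v_\star^\dagger$ to a vertex $v^\dagger$) yield the same value at $v^\dagger$ precisely when $\omega$ is \emph{closed}: the signed sum of $\cO_\circ(e)-1$ around every cycle of $\rm^\dagger$ vanishes, an edge $e^\dagger$ contributing with a $+$ sign when the cycle runs along its orientation and a $-$ sign otherwise. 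Since a $1$-cochain on a connected plane (hence spherical) map is a coboundary as soon as it is closed — equivalently, because the face boundaries of a connected plane map span its cycle space — it suffices to verify this vanishing for the contour of each face of $\rm^\dagger$.

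First I would use that the faces of $\rm^\dagger$ are in bijection with the vertices of $\rm$, and that the canonical orientation of the Eulerian map $\rm^\dagger$ turns the contour of a face into a coherently directed closed walk: counterclockwise around the face $f_v$ dual to a white vertex $v$, clockwise around the face dual to a black vertex (this is exactly the description of the canonical orientation recalled in Section~\ref{sub:duality}). Moreover no edge of $\rm^\dagger$ borders the same face twice, since its two sides carry faces of opposite colors, so the boundary of $f_v$ traverses the edges $e^\dagger$ dual to the edges $e$ of $\rm$ incident to $v$, each exactly once. Consequently the value of $\omega$ on the cycle $\partial f_v$ is simply $\sum_{h\sim v}\big(\cO_\circ(e_h)-1\big)$, where $e_h$ is the edge of $\rm$ carrying the half-edge $h$; there is no multiplicity subtlety since a bipartite map has no loops.

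It then remains to evaluate this sum, which is where the $\alpha_d$-orientation hypothesis is used — and only through $\out\equiv\alpha_d$ together with $(d+1)$-fractionality, minimality of $\cO$ playing no role here. If $v$ is white, then $\cO_\circ(e_h)=\cO(h)$ for each $h\sim v$, whence $\sum_{h\sim v}\cO_\circ(e_h)=\out(v)=\alpha_d(v)=\deg(v)$ and the sum vanishes. If $v$ is black, writing $\cO_\bullet(e)$ for the value of $\cO$ on the half-edge of $e$ at its black endpoint, $(d+1)$-fractionality gives $\cO_\circ(e)=(d+1)-\cO_\bullet(e)$, hence $\sum_{h\sim v}\cO_\circ(e_h)=(d+1)\deg(v)-\out(v)=(d+1)\deg(v)-d\deg(v)=\deg(v)$, and again $\sum_{h\sim v}(\cO_\circ(e_h)-1)=0$. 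This checks closedness around every face and proves the lemma.

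The step I expect to demand the most care is not a computation but the orientation bookkeeping in the reduction: one must pin down that the canonical orientation of $\rm^\dagger$ really makes each face contour a coherently oriented cycle, so that $\omega$ evaluated on that cycle collapses exactly to the degree identity $\sum_{h\sim v}\cO_\circ(e_h)=\deg(v)$, with the telescoping of any primitive around the contour being automatic. Once these conventions are fixed, the two cases are immediate, and the appeal to the planar discrete Poincaré lemma (triviality of $H^1$ of the sphere), which may simply be cited, takes care of passing from faces to arbitrary cycles and hence to the independence from $\rt$.
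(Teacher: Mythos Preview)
Your argument is correct and follows essentially the same route as the paper: reduce the path-independence to the vanishing of the signed sum of $\cO_\circ(e)-1$ around every cycle, use that face contours span the cycle space of the planar dual, and then check the face case via the $\alpha_d$ out-degree identity (you even carry out the black case, which the paper leaves to the reader). The only cosmetic difference is that you phrase the reduction in cohomological language (closed $1$-cochain, discrete Poincar\'e lemma) whereas the paper writes it out combinatorially as a decomposition of a cycle into the face contours it encloses.
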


The proof of the lemma is postponed to the end of the proof of the Proposition. Finally, we prove that $\tilde \ell$ is the geodesic labeling of $\rm^\dagger$, by applying the characterization of Proposition~\ref{prop: Characterization geodesic labeling}. The first condition holds immediately by construction. To verify the second and third conditions, let $e^\dagger:=(v_1^\dagger,v_2^\dagger)\in E(\rm^\dagger)$ as in the proposition. Since $\cO_\circ(e)\geq 0$, we have:
\begin{equation}\label{eq:variation_geodesic_labeling}  
\tilde{\ell}(v_2^\dagger) = \tilde{\ell}(v_1^\dagger) - \Big(\cO_\circ(e)-1\Big) \leq  \tilde{\ell}(v_1^\dagger)+1.
\end{equation}
This completes the verification of the third condition. To prove the second condition, observe that the inequality in~\eqref{eq:variation_geodesic_labeling} is an equality if and only if $e$ is saturated from its black endpoint to its white endpoint. Let $f_2$ be the face of $\rm$ dual to $v_2^\dagger$. Since $\cO$ is minimal, there exists a saturated edge $\tilde e\in \mathrm{E}(\rm)$ incident to $f_2$, and such that $f_2$ lies on its right. By Property~\ref{claim: Saturated edges are from black to white endpoints}, $\tilde e$ is saturated from its black endpoint to its white endpoint. This concludes the proof.
\end{proof}	

\begin{figure}[t]
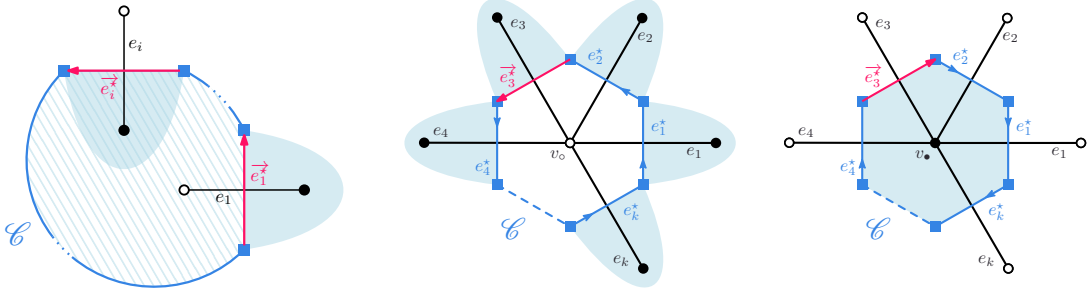

  \centering
  \begin{subfigure}[t]{0.3\linewidth}
    \centering
    \includegraphics[width=\linewidth ,page=4]{AlphaToGeodesic.pdf}
    \caption{Counterclockwise cycle surrounding several faces, and}
  \end{subfigure}\qquad
  \begin{subfigure}[t]{0.6\linewidth}
    \centering
    \includegraphics[width=\linewidth, page=3]{AlphaToGeodesic.pdf}
    \caption{directed contours of a white and black face, respectively.}
  \end{subfigure}
  \caption{\label{fig: labelings on cycle to prove the correspondence} A general cycle where  $\varepsilon(\ora{e_1^\dagger})=1$ and $\varepsilon(\ora{e_i^\dagger})=-1$, with the dashed area indicating the interior of $\mathscr{C}$ (left), and cycles being the direct contour of white and black face, respectively, where $\varepsilon(\ora{e_i^\dagger})$ is constantly $1$ (right).}
\end{figure}

We end this section by proving Lemma~\ref{lem:labeling_independent}.
\begin{proof}[Proof of Lemma~\ref{lem:labeling_independent}]
The proof is illustrated in Figure~\ref{fig: labelings on cycle to prove the correspondence}. Let $\ora{e^\dagger}$ be a directed edge of $\rm^\dagger$. We set:
\begin{equation*}
    \varepsilon(\ora{e^\dagger}) \coloneqq
    \begin{cases*}
      +1  & 
      if the direction of $\ora{e^\dagger}$ coincides with the canonical direction of $e^\dagger$ on $\rm^\dagger$ \\
      -1  & otherwise.
    \end{cases*}
\end{equation*}
Let $\mathscr{C}:=(\ora{e_1^\dagger},\ldots,\ora{e_k^\dagger})$ be a simple directed cycle in $\rm^\dagger$. To prove the lemma, it suffices to show that the total variation of the labels along $\mathscr{C}$, as defined by~\eqref{eq: Correspondence alpha-orientation and geodesic labeling}, is equal to zero; or, more formally that:
\begin{equation}\label{eq:contour_cycle}
\sum_{i=1}^{k} \varepsilon(\ora{e_i^\dagger})\, \big( \cO_\circ(e_i) - 1 \big) = 0,
\end{equation}
where $e_i$ stands for the dual edge of $e_i^\dagger$ for any $i\in\left\{1,\ldots, k\right\}$.\\

The cycle $\mathscr{C}$ disconnects $\rm^\dagger$ into 2 connected components composed of faces. The \emph{interior} of $\mathscr{C}$ is defined as the one which does not contain $v_\star^\dagger$. Since the contour of $\mathscr{C}$ can be written as the sum of the contours of the faces in its interior, it is in fact enough to establish~\eqref{eq:contour_cycle} when $\mathscr{C}$ is the contour of a face. 

Assume that $\mathscr{C}$ is the directed contour of a white face of $\rm^\dagger$, so that $\varepsilon(\ora{e_i^{\dagger}})=1$, for any $1\leq i\leq k$. Write $v_\circ$ for the vertex of $\rm$ associated with this face.
By definition of $\alpha_d$-orientations, we have:
\[
  \sum_{i=1}^{k} \Big(\cO_\circ(e_i) - 1 \Big) = \sum_{h\sim v_\circ}{\cO(h)} - \deg(v_\circ) = \alpha_d(v_\circ) - \deg(v_\circ) = 0.
\]
The case of a black face is similar and is left to the reader.
\end{proof}

	\subsection{Connection between \texorpdfstring{$\alpha_d$}{alpha}-orientations and Eulerian orientations}
	
In this section, we show how $\alpha_d$-orientations generalize the classical notions of Eulerian (or more precisely quasi-Eulerian) orientations to bipartite maps. Unlike in the rest of the article, here, $\rm$ refers to a general map, which is not necessarily bipartite.\

An \emph{Eulerian orientation} is a $1$-fractional $\alpha$-orientation, where ${\alpha(\cdot)\coloneqq \deg(\cdot)}/2$. 
If $\rm$ is Eulerian, i.e. if all its vertices have even degree, it is classical and follows directly from Euler's theorem, that $\rm$ can be endowed with an Eulerian orientation, and that any Eulerian orientation is accessible.\
More generally, a \emph{quasi-Eulerian orientation} is a $2$-fractional $\alpha$-orientation with ${\alpha(\cdot)\coloneqq \deg(\cdot)}$. It is straightforward to see that any planar map can be endowed with a quasi-Eulerian orientation, and that any of them is accessible. Moreover, by Proposition~\ref{prop: Unique minimal orientation}, every Eulerian planar map admits a unique minimal accessible Eulerian orientation, and every planar map admits a unique minimal accessible quasi-Eulerian orientation.

\begin{figure}[t]
	\centering
	\includegraphics[width=0.45\linewidth ,page=2]{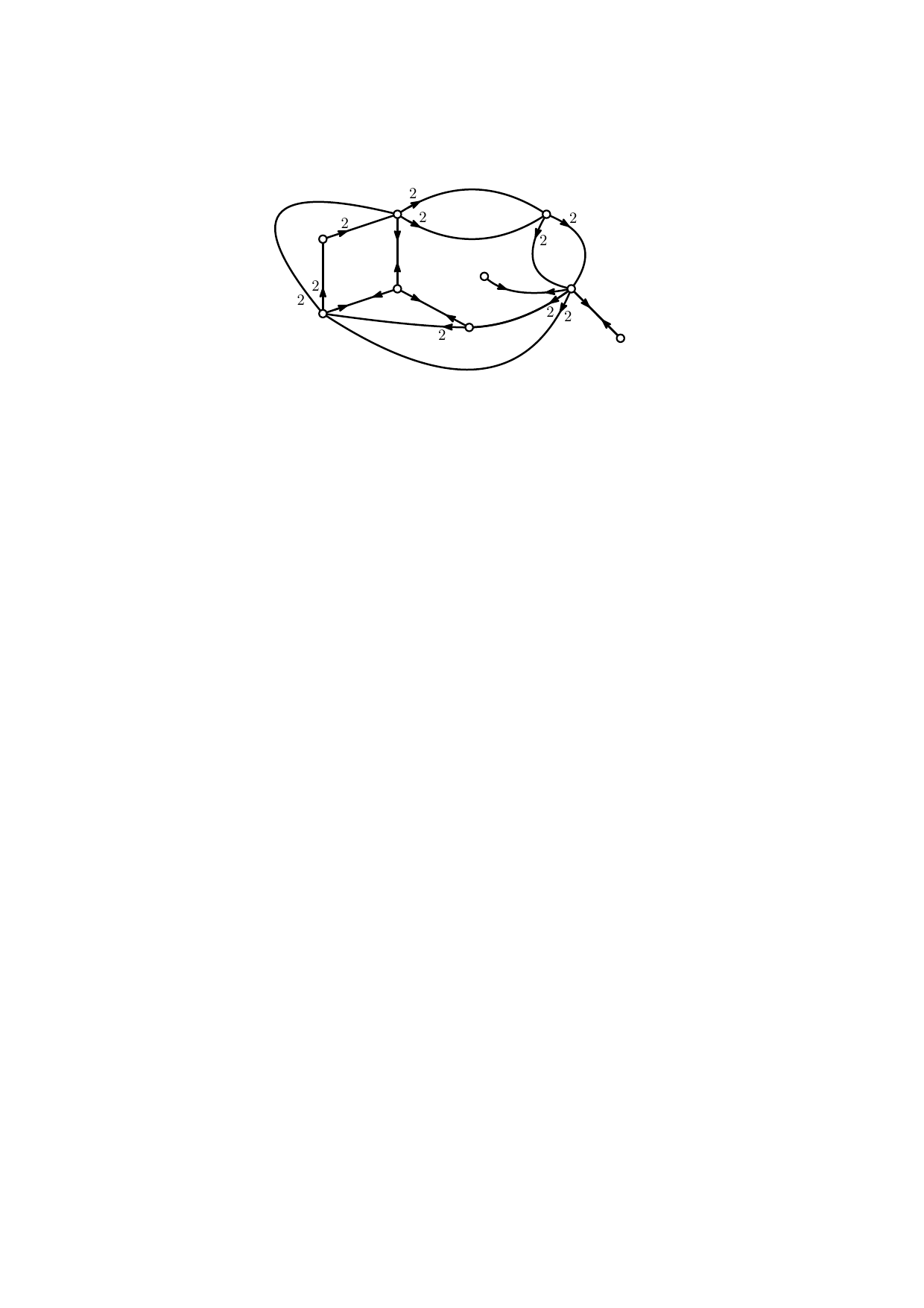}\qquad
	\includegraphics[width=0.4\linewidth ,page=1]{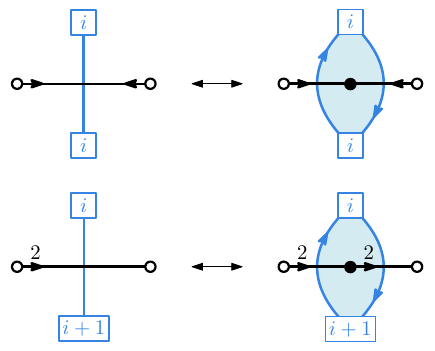}
	\caption{\label{fig: Quasi-Eulerian to alpha d-orientation} Quasi-Eulerian orientations. A plane map with its minimal quasi-Eulerian orientation and dual undirected geodesic labeling (left), the possible orientations of an edge, its connection to the dual labeling, and the corresponding hypermap with black faces of degree 2 (right).}
\end{figure}

Minimal Eulerian and quasi-Eulerian orientations are known to be deeply related to the geodesic labeling on $\rm^\dagger$, where the geodesic labeling is defined here for the classical notion of graph distance on $\rm^\dagger$, and not on its directed definition as in Section~\ref{sub:directed_geodesic_labeling}. More precisely, as illustrated on Figure~\ref{fig: Quasi-Eulerian to alpha d-orientation}, in the minimal orientation: 
\begin{itemize}
 \item non-saturated edges separate faces with equal geodesic labeling, 
 \item saturated edges separate faces with different geodesic labeling, such that the face with higher label is on the right of the saturated edge. Note that, in that case, both labels differ exactly by 1, since we consider the non-directed geodesic labeling. 
\end{itemize}
\smallskip

This fact can be recovered as a special case of Proposition~\ref{prop: Correspondence alpha-orientation and geodesic labeling}. Indeed, any planar map can be transformed into a bipartite map by inserting a black vertex (of degree 2) in the middle of each of its edges. Dually, this corresponds to the usual transformation of planar maps into hypermaps by replacing every edge by a face of degree 2. Observe that the directed geodesic labeling on the hypermap obtained coincides with the undirected geodesic labeling on the initial map. 

Let $\rm$ be a rooted planar map,  and let $\tilde{\rm}$ be its associated  rooted planar bipartite map. Write $\Delta$ for the maximal vertex degree of $\rm$. Note that $\Delta = \Delta_\circ(\tilde \rm)$. There is a natural one-to-one correspondence between quasi-Eulerian orientations of $\rm$ and $\alpha_{\Delta}$-orientations of $\tilde{\rm}$. Precisely, given a quasi-Eulerian orientation $\cO$ of $\rm$, the associated $\alpha_{\Delta}$-orientation $\tilde{\cO}$ of $\tilde{\rm}$ is defined as follows. For any edge $\tilde{e}=\{h_\circ,h_\bullet\}\in \mathrm{E}(\tilde{\rm})$:
\begin{equation*}
    \begin{cases*}
      \tilde{\cO}(h_\circ) \coloneqq \cO(h_\circ), \\
      \tilde{\cO}(h_\bullet) \coloneqq (\Delta+1) - \cO(h_\circ).
    \end{cases*}
\end{equation*}
Accessibility and minimality are preserved. Thus, as a consequence of Proposition~\ref{prop: Correspondence alpha-orientation and geodesic labeling}, we recover as a special case, the correspondence  between the minimal quasi-Eulerian orientation of a plane map and the geodesic labeling of its dual map.

\section{Recovering mobiles and BDG bijection with \texorpdfstring{$\alpha_d$}{alpha-d}-orientations}\label{AppB}

In this section, we apply to the class of $\alpha_d$-orientations, the generic bijective scheme introduced by Bernardi and Fusy in \cite{BernardiFusy_TrigQuadPent}, and further expanded in \cite{BernardiFusy_Girth, BernardiFusy_Boundaries, BernardiFusy_Hypermaps}.
This allows us to recover the well-known bijection between Eulerian planar maps and  ``mobiles'', due to Bouttier, Di Francesco and Guitter \cite{BouttierDiFrancescoGuitter_Mobiles}. As already mentioned in the introduction, the ``mobile'' bijection was already recovered as a special case of a generic framework developed for hypermaps by Bernardi and Fusy~\cite{BernardiFusy_Hypermaps}. The novelty of our result is that we apply here their ``classical'' framework, i.e. the one they developed for classical maps in~\cite{BernardiFusy_TrigQuadPent} and not for hypermaps.

In Bernardi--Fusy's bijection and in Bouttier--Di Francesco--Guitter's bijection, the central objects are decorated trees that are referred to as \emph{mobiles}.  However, the definition of mobiles differ between the two articles \cite{BernardiFusy_TrigQuadPent, BouttierDiFrancescoGuitter_Mobiles}. To avoid confusion, we will refer to the mobiles in Bernardi--Fusy’s bijection as \emph{blossoming mobiles} (since they carry opening and closing stems), and the mobiles in Bouttier--Di Francesco--Guitter’s bijection as \emph{labeled mobiles} (since they are labeled!).\\

In contrast to the rest of the article, this appendix concerns exclusively \emph{unrooted} bipartite plane maps.

\subsection{Bernardi-Fusy's bijective scheme.}

Following \cite{BernardiFusy_TrigQuadPent}, a \emph{blossoming mobile} is an unrooted plane blossoming tree $\rt$ endowed with an orientation $\cO$, and which satisfies the following properties: 
\begin{itemize}
	\item the vertices of the tree can be of 2 types: either round or square, 
	\item the dangling half-edges are opening stems (i.e. outgoing) and are incident to square vertices, and
	\item for any half-edge $h\in \mathrm{H}(\rt)$, $\cO(h)=0$ if and only if $h$ is an outgoing stem or if $h$ is part of a round-square edge and is incident to the round vertex. Otherwise, we require that $\cO(h)>0$.
\end{itemize}

The \emph{excess} of a blossoming mobile is the number of its half-edges adjacent to a round vertex minus the number of its opening stems. Note that it differs from the notion of excess for blossoming trees  defined in Section~\ref{sec: Planted trees}. Finally, the \emph{weight} of an edge in a blossoming mobile is the sum of the orientation of its two half-edges.\medskip

The generic bijective scheme of~\cite{BernardiFusy_TrigQuadPent} is a bijective operation $\Phi_{\mathrm{BF}}$ that associates to a blossoming map $\rm$ endowed with an accessible and minimal orientation $\cO$, a blossoming mobile. 

\begin{rema}
Note that the maps considered here are unrooted, so the notion of accessibility needs to be adapted. In this appendix, following~\cite{BernardiFusy_TrigQuadPent} we say that an orientation on a plane map is \emph{accessible} if there exists an outer vertex that is accessible from every other vertex.
\end{rema}

Let us recall the definition of $\Phi_{\mathrm{BF}}(m,\cO)$: First, insert a square vertex into each face of $\rm$, and then apply the following \emph{local transformation} to each edge of $\rm$, see Figure~\ref{fig: BF local bijection}:

\begin{itemize}
	\item The \emph{local transformation} of a non-saturated edge  $e\in \mathrm{E}(\rm)$  consists in keeping it and adding an opening stem, pointing towards $e$, on each of the two square vertices placed in its two adjacent faces. The orientation of $e$ is preserved.

	\item The \emph{local transformation} of a saturated edge  $e\in \mathrm{E}(\rm)$ from a vertex $u$ to a vertex $v$, consists in adding an opening stem, pointing towards $e$, on the square vertex in its left face, then creating an edge between the square vertex in its right face and $v$, and removing $e$. The created square-round edge inherits the orientation of $e$, meaning that it is saturated from the square vertex to $v$.
\end{itemize}

Then, one of the main results of~\cite{BernardiFusy_TrigQuadPent} is:
\begin{figure}
	\centering
	\includegraphics[width=0.4\linewidth ,page=2]{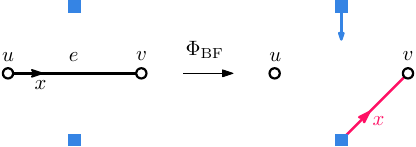}\qquad\qquad
	\includegraphics[width=0.4\linewidth ,page=1]{BFBijection.pdf}
	\caption{\label{fig: BF local bijection} The local transformation of an edge following Bernardi-Fusy's bijection: the non-saturated case (left), and the saturated case (right).}
\end{figure}

\begin{theo}[\cite{BernardiFusy_TrigQuadPent}, Theorem 11]\label{theo: BernardiFusy - Bijection}
The mapping $\Phi_{\mathrm{BF}}$ is a bijection between the set of unrooted plane maps endowed with a minimal accessible orientation and the set of blossoming mobiles of positive excess. 
\end{theo}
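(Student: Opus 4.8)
The plan is to prove this by the three-step method that underlies Theorem~\ref{theo: AlbenquePoulalhonTheorem - Blossoming bijection for maps with an minimal accessible orientation} and its ancestors (Poulalhon--Schaeffer~\cite{PouSch06}, Bernardi~\cite{Bernardi07}, Albenque--Poulalhon~\cite{AlbenquePoulalhon_Generic}): (i) check that $\Phi_{\mathrm{BF}}$ is well defined, i.e.\ that $\Phi_{\mathrm{BF}}(\rm,\cO)$ is always a blossoming mobile of positive excess; (ii) construct an explicit inverse $\Psi_{\mathrm{BF}}$, a ``closure'' operation sending a blossoming mobile of positive excess back to an oriented plane map; (iii) verify that $\Psi_{\mathrm{BF}}$ and $\Phi_{\mathrm{BF}}$ are mutually inverse. (This is Theorem~11 of~\cite{BernardiFusy_TrigQuadPent}, from which all missing details can be imported.)

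For~(i), I would read the local conditions defining a blossoming mobile off the two local transformations, edge by edge: a non-saturated edge yields a round--round edge both of whose half-edges carry positive flow, a saturated edge $\ora{uv}$ yields a round--square edge saturated towards its round endpoint together with one opening stem borne by a square vertex, and every dangling half-edge ends up being an opening stem incident to a square vertex. The two genuinely global facts are that $\Phi_{\mathrm{BF}}(\rm,\cO)$ is \emph{connected} and \emph{acyclic}. Connectedness I would deduce from accessibility: every round vertex reaches the distinguished outer vertex of $\rm$ along a forward path, and these paths, possibly rerouted through the square vertices inserted in the faces, survive the transformation. Acyclicity I would deduce from minimality: a cycle in $\Phi_{\mathrm{BF}}(\rm,\cO)$ would, after undoing the local transformations, project onto a forward closed walk of $\rm$ bounding a region avoiding the outer face, i.e.\ onto a counterclockwise cycle, contradicting minimality. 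Positivity of the excess then follows from a direct count using Euler's formula for $\rm$.

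For~(ii)--(iii), $\Psi_{\mathrm{BF}}$ is the ``clockwise closure'' of a blossoming mobile: sweeping clockwise around the tree, one re-creates the missing edges by matching each opening stem (carried by a square vertex) to a round corner according to the parenthesis-type rule inverse to the local transformations of~(i), and then deletes the square vertices. First one checks that this closure is independent of the order in which the individual matchings are carried out and that it produces a plane map; then that the induced orientation is accessible and---the crucial point---\emph{minimal}, minimality holding because the matchings are all performed clockwise and hence cannot create a counterclockwise cycle. Granting this, the two compositions are the identity by a purely local check: $\Phi_{\mathrm{BF}}$ leaves non-saturated edges untouched and merely cuts each saturated edge at its head into an opening stem on a square vertex, while $\Psi_{\mathrm{BF}}$ re-glues that half-edge to the very same head, since the cyclic order around the surface is preserved throughout.

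I expect the main obstacle to be step~(ii): proving that the clockwise closure is well defined and that it produces precisely \emph{the} unique minimal accessible orientation compatible with the given mobile. This is the delicate point common to all bijections of this family; it is settled by showing that the clockwise closure preserves accessibility and never introduces a counterclockwise cycle, so that its output meets the hypotheses of the uniqueness statement analogous to Proposition~\ref{prop: Unique minimal orientation}. Combined with the well-definedness of the closure, this pins down the inverse of $\Phi_{\mathrm{BF}}$ unambiguously and yields the bijection.
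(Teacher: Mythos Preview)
The paper does not prove this theorem at all: it is quoted verbatim as Theorem~11 of~\cite{BernardiFusy_TrigQuadPent} and used as a black box in Appendix~\ref{AppB}. There is therefore no ``paper's own proof'' to compare your proposal against. Your sketch is a reasonable high-level outline of the strategy that Bernardi and Fusy follow in the cited reference, and you correctly identify the two global ingredients (accessibility $\Rightarrow$ connectedness, minimality $\Rightarrow$ acyclicity) and the delicate point (well-definedness and minimality of the closure). If anything, note that the inverse $\Psi_{\mathrm{BF}}$ in~\cite{BernardiFusy_TrigQuadPent} is not literally the parenthesis-matching closure of Schaeffer/Poulalhon--Schaeffer type that you describe: their local rules reattach opening stems to \emph{round} corners via a specific successor rule around the mobile, and the argument that this yields a planar map with the correct orientation is carried out directly rather than by invoking Proposition~\ref{prop: Unique minimal orientation}. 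But at the level of detail of your sketch this distinction is minor.
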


\begin{rema}
The bijective mapping $\Phi_{\mathrm{BF}}$ induces several parameter correspondences. Specifically, let  $\rm$ be a plane map endowed with a minimal accessible orientation $\cO$, and let $\rt=\Phi_{\mathrm{BF}}(\rm,\cO)$. There are  correspondences between : 
\begin{itemize}
	\item the degrees of the outer face of $\rm$ and the excess of $\rt$,
	\item the degrees of the other faces of $\rm$ and the degrees of the square vertices of $\rt$,
	\item the indegrees of the vertices of $\rm$ and the indegrees of the round vertices of $\rt$,
	\item the saturated edges of $\rm$ and the round-square edges in $\rt$.
\end{itemize}
Moreover, if $\cO$ is $k$-fractional, then every edge of $\rt$ has weight $k$.
\end{rema}

\begin{rema}
The previous theorem is a specific case of the more general bijective scheme defined by Bernardi and Fusy in \cite{BernardiFusy_TrigQuadPent}, tailored to fit our presentation. For the sake of simplicity and consistency with the rest of the present article, we have also slightly adapted their conventions for defining the transformation of maps.
\end{rema}

\subsection{Application to \texorpdfstring{$\alpha_d$}{alpha}-orientations}\label{sub:BF_application_to_alpha_d_} To apply Theorem \ref{theo: BernardiFusy - Bijection} to the context of bipartite maps, we need to introduce the following family of blossoming mobiles.
Fix $d\geq 1$. A \emph{$d$-blossoming mobile} is a blossoming mobile $\rt$, where round vertices are additionally colored in black and white and which additionally satisfies the following: 
\begin{enumerate}[(1)]
	\item it has positive excess,
	\item all edges are either between a white and a black vertices or between a white vertex and a square vertex,
	\item with maximum white vertex degree $d$,
\end{enumerate}
and endowed with a $(d+1)$-fractional orientation $\cO : \mathrm{H}(\rt) \rightarrow \mathbb{Z}_{\geq 0}$ such that
\begin{enumerate}[resume]
	\item every white vertex $v$ satisfies $\ind(v)=d\cdot\deg(v)$,
	\item every black vertex $v$ verifies $\ind(v)=\mathrm{d}(v)$, where  $\mathrm{d}(v)$ is the degree of $v$ in the oriented map ${\Phi_{\mathrm{BF}}}^{-1}(\rt)$, and
	\item without saturated white-black edges.
\end{enumerate}

Then Theorem~\ref{theo: BernardiFusy - Bijection} can be specialized to bipartite maps endowed with their minimal $\alpha_d$-orientation, as follows, see Figure \ref{fig: Link between BF & BDG} for an example. 

\begin{coro}\label{cor: BF bij on alpha-orientations}
Fix $d\geq1$. The mapping $\Phi_{\mathrm{BF}}$ induces a bijection between the set of bipartite plane maps with maximum white vertex degree $d$ and endowed with their minimal $\alpha_d$-orientation, and the set of $d$-blossoming mobiles. 
\end{coro}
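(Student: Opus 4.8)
The plan is to derive Corollary~\ref{cor: BF bij on alpha-orientations} from the general bijection $\Phi_{\mathrm{BF}}$ of Theorem~\ref{theo: BernardiFusy - Bijection} by identifying precisely the image, under $\Phi_{\mathrm{BF}}$, of the family of maps under consideration. The first step is to legitimize the restriction: by Property~\ref{claim: Unique minimal accessible alpha d orientation for a bipartite map} every bipartite plane map $\rm$ with maximum white vertex degree $d$ carries a \emph{unique} minimal accessible $\alpha_d$-orientation $\cO_\rm$, and this orientation is moreover strongly connected, hence accessible in the sense used for unrooted maps in this appendix. Thus $\rm\mapsto(\rm,\cO_\rm)$ is an injection into the source of $\Phi_{\mathrm{BF}}$, and it only remains to prove that a blossoming mobile $\rt$ is a $d$-blossoming mobile if and only if the pair $(\rm,\cO)={\Phi_{\mathrm{BF}}}^{-1}(\rt)$ has $\rm$ bipartite with maximum white vertex degree $d$ and $\cO=\cO_\rm$, the $2$-colouring of $\rm$ being the one induced by the colours of the round vertices of $\rt$.

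For the forward implication I would fix such a map $\rm$ with its $\alpha_d$-orientation $\cO$, set $\rt=\Phi_{\mathrm{BF}}(\rm,\cO)$, colour each round vertex of $\rt$ by the colour of the corresponding vertex of $\rm$, and check the defining conditions of a $d$-blossoming mobile using the parameter correspondences recorded after Theorem~\ref{theo: BernardiFusy - Bijection}. Positive excess is automatic. The round--square edges of $\rt$ are exactly the images of the saturated edges of $\rm$ and are attached to their heads; since $d\geq\Delta_\circ(\rm)$, Property~\ref{claim: Saturated edges are from black to white endpoints} forces those heads to be white, while the round--round edges of $\rt$ are the non-saturated edges of $\rm$, hence white--black by bipartiteness — this gives conditions (2) and (6) at once. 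A white vertex of $\rm$ has no saturated outgoing edge, so its degree is unchanged by $\Phi_{\mathrm{BF}}$, whence condition (3). The weight statement gives that the inherited orientation of $\rt$ is $(d+1)$-fractional, and the indegree correspondence gives conditions (4) and (5): at a round vertex the indegree is preserved by $\Phi_{\mathrm{BF}}$, a white vertex $v$ of $\rm$ satisfies $\ind_\rm(v)=(d+1)\deg(v)-\out_\rm(v)=d\deg(v)$, and a black vertex $v$ satisfies $\ind_\rm(v)=(d+1)\deg_\rm(v)-d\deg_\rm(v)=\deg_\rm(v)=\mathrm{d}(v)$; the asymmetry between $\deg_\rt$ in (4) and $\mathrm{d}(v)=\deg_\rm(v)$ in (5) is exactly accounted for by the fact that black vertices lose their saturated outgoing edges in the transformation whereas white vertices do not.

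For the reverse implication I would start from a $d$-blossoming mobile $\rt$, set $(\rm,\cO)={\Phi_{\mathrm{BF}}}^{-1}(\rt)$ — a plane map with minimal accessible orientation by Theorem~\ref{theo: BernardiFusy - Bijection} — and transport the colouring of the round vertices of $\rt$ to $\rm$. The non-saturated edges of $\rm$ are the round--round edges of $\rt$, hence white--black by condition (2); each saturated edge of $\rm$ is the image of a round--square edge, so its head is white, again by condition (2). It then remains to see that the tail of every saturated edge of $\rm$ is black, i.e.\ that no white vertex of $\rm$ is the tail of a saturated edge; granting this, $\rm$ is bipartite, and reversing the computations of the forward implication, conditions (4)--(5) and the $(d+1)$-fractionality yield $\out_\rm(v)=\alpha_d(v)$ for all $v$, so $\cO$ is the (unique, minimal, accessible) $\alpha_d$-orientation of $\rm$, while condition (3) bounds $\Delta_\circ(\rm)$. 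Finally, since both inclusions are proved and $\Phi_{\mathrm{BF}}$ together with $\rm\mapsto(\rm,\cO_\rm)$ are injective, this establishes the claimed bijection.

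The main obstacle I anticipate is precisely the last point of the reverse implication: showing that the colouring of the round vertices of a $d$-blossoming mobile pulls back to a \emph{proper} $2$-colouring of the reconstructed map, equivalently that a saturated edge of $\rm$ cannot join two vertices of the same colour. This is the one place where the parameter dictionary of $\Phi_{\mathrm{BF}}$ is not enough and one must inspect the local un-transformation of Bernardi--Fusy face by face: one has to track which edges and opening stems of $\rt$ surround a given square vertex and determine the colour of the round vertex recovered as the tail of each reconstructed saturated edge, using condition (2) (all round--square edges at a square vertex point to white round vertices) to force that tail to be black. The combinatorial bookkeeping around the square vertices is the delicate part; the rest of the argument is a routine translation through the already-established correspondences.
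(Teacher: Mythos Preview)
Your forward implication is essentially the paper's argument and is correct.

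The reverse implication, however, has a genuine gap at exactly the point you flag. You propose to rule out white--white saturated edges in $\rm$ by a local inspection around each square vertex, invoking only condition~(2). This cannot work: condition~(2) tells you that the \emph{head} of every reconstructed saturated edge is white (it is the round endpoint of a round--square edge), but the \emph{tail} is not visible locally at the square vertex. In the Bernardi--Fusy local rule a saturated edge $u\to v$ leaves behind a round--square edge at the square on its right (recording $v$) and an opening stem at the square on its left; the vertex $u$ is recovered only through the global closure of the mobile, not from the neighbourhood of either square. So nothing in condition~(2) constrains the colour of $u$, and your ``bookkeeping around the square vertices'' has no handle on it.

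The paper resolves this with a short \emph{global counting} argument that uses conditions~(4) and~(5) already at the bipartiteness step, not only afterwards. Since every edge of $\rt$ is white--black or white--square and the white half of each white--square edge carries orientation~$0$, one has $\sum_{v\in V_\circ(\rt)}\out(v)=\sum_{v\in V_\bullet(\rt)}\ind(v)$. Evaluating the left side via~(4) gives $\sum_{v\in V_\circ(\rt)}\deg_\rt(v)=n_{\circ\sqempt}(\rt)+n_{\bullet\circ}(\rt)$, which by the saturated/non-saturated edge correspondence equals $n^{\mathrm{sat}}_{\bullet\circ}(\rm)+n^{\mathrm{sat}}_{\circ\circ}(\rm)+n^{\mathrm{nonsat}}_{\bullet\circ}(\rm)$; evaluating the right side via~(5) gives $\sum_{v\in V_\bullet}\deg_\rm(v)=n^{\mathrm{sat}}_{\bullet\circ}(\rm)+n^{\mathrm{nonsat}}_{\bullet\circ}(\rm)$ (using $n_{\bullet\bullet}(\rm)=0$, which is immediate). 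Comparing forces $n^{\mathrm{sat}}_{\circ\circ}(\rm)=0$, hence $\rm$ is bipartite. Once this is known, your derivation of $\cO=\cO_\rm$ from~(4)--(5) and of $\Delta_\circ(\rm)\le d$ from~(3) goes through.
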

\begin{proof}
Let $\rm$ be a bipartite plane map, with maximum white vertex degree $d$, and endowed with its minimal $\alpha_d$-orientation $\cO$, and let $\rt=\Phi_{\mathrm{BF}}(\rm,\cO)$. We start by proving that $\rt$ is a $d$-blossoming mobile.

First, as a direct consequence of Theorem~\ref{theo: BernardiFusy - Bijection}, $\rt$ has positive excess so that (1) holds. 

Then, every edge $e$ of $\rt$ either corresponds to the local transformation of a non-saturated edge in $(\rm,\cO)$, in which case it connects a black vertex and a white vertex, or it corresponds to a saturated edge $\tilde{e}$ in $(\rm,\cO)$. In the latter case, Property~\ref{claim: Saturated edges are from black to white endpoints} ensures that $\tilde{e}$ is saturated from its black endpoint to its white endpoint. Then, the local transformation imply that $e$ is a white-square edge, see Figure~\ref{fig: BF local bijection}. Properties (2) and (6) then follow.  

Finally, it is straightforward that $\ind(v)=d\cdot\deg_\rm(v)$ for any $v\in V_\circ(\rm)$, and $\ind(v)=\deg_\rm(v)$ for any $v\in V_\bullet(\rm)$, so that Properties (4) and (5) follow.\\

Reciprocally, let $\rt$ be a $d$-blossoming tree. It has positive excess by (1), allowing us to define $(\rm,\cO)={\Phi_{\mathrm{BF}}}^{-1}(\rt)$, as stated in Theorem~\ref{theo: BernardiFusy - Bijection}.

First, we prove that $\rm$ is bipartite. For any ${\oslash},\otimes\in\{\bullet,\circ,\text{\tiny{$\square$}}\}$, and $\mathrm{x}\in\{\rt,\rm\}$, denote by $n_{\oslash\otimes}(\mathrm{x})$ the number of edges in $\mathrm{x}$ between two vertices of type $\oslash$ and $\otimes$. Similarly, we denote respectively by $n^{\mathrm{sat}}_{\oslash\otimes}(\mathrm{x})$ and $n^{\mathrm{non sat}}_{\oslash\otimes}(\mathrm{x})$ the number of saturated and non-saturated edges of this type. Finally, let $\rV_\oslash(\mathrm{x})$ denote the set of vertices of $\mathrm{x}$ of type $\oslash$. With these notations in place, we aim to demonstrate that $n_{\bullet\bullet}(\rm)=n_{\circ\circ}(\rm)=0$.

By Property~(2), every vertex adjacent to a square vertex in $\rt$ is both round and white, so that the edges of $\rm$ consist of the round-round edges of $\rt$ (which are also black-white) together with additional saturated edges connecting any colored vertex to a white vertex. Hence, one has:
\begin{equation*}
	n_{\bullet\bullet}(\rm)=0
	\quad \text{and} \quad 
	n_{\circ\circ}(\rm)=n^{\mathrm{sat}}_{\circ\circ}(\rm),
\end{equation*}
and from the correspondences induced by $\Phi_{\mathrm{BF}}$ and (6), one has:
\begin{equation}\label{eq: Enumeration in proof of BF}
	n_{\circ\sqempt}(\rt) = n^{\mathrm{sat}}_{\bullet\circ}(\rm) + n^{\mathrm{sat}}_{\circ\circ}(\rm)  \quad \text{and} \quad  n_{\bullet\circ}(\rt) = n^{\mathrm{nonsat}}_{\bullet\circ}(\rt) =  n^{\mathrm{nonsat}}_{\bullet\circ}(\rm) .
\end{equation}
Recall that in a blossoming mobile the orientation of every half-edge belonging to a round-square edge and being incident to the round vertex is zero, and that $n_{\bullet\sqempt}(\rt)=n_{\bullet\bullet}(\rt)=n_{\circ\circ}(\rt)=0$. Therefore, it follows that:
\begin{equation}\label{eq:degout_deginn}
	\sum\limits_{v\in \rV_\circ(\rt)}{\out(v)} =\sum\limits_{v\in \rV_\bullet(\rt)}{\ind(v)}.
\end{equation}
Moreover, by Property~(4) and~\eqref{eq: Enumeration in proof of BF}, on the one hand, we have: 
\begin{equation*}
	\sum\limits_{v\in \rV_\circ(\rt)}{\out(v)} = \sum\limits_{v\in \rV_\circ(\rt)}{\deg(v)} = n_{\circ\sqempt}(\rt) + n_{\bullet\circ}(\rt) =  n^{\mathrm{sat}}_{\bullet\circ}(\rm) +  n^{\mathrm{nonsat}}_{\bullet\circ}(\rm) + n^{\mathrm{sat}}_{\circ\circ}(\rm).
\end{equation*}
On the other hand, by~Property (5), we have:
\begin{equation*}
	\sum\limits_{v\in \rV_\bullet(\rt)}{\ind(v)} = \sum\limits_{v\in \rV_\bullet(\rm)}{\deg(v)} = n^{\mathrm{sat}}_{\bullet\circ}(\rm) + n^{\mathrm{nonsat}}_{\bullet\circ}(\rm).
\end{equation*}
We conclude that $n^{\mathrm{sat}}_{\circ\circ}(\rm)=0$. Thus, $\rm$ is bipartite. 

Since the white vertex degree is preserved by $\Phi_{\mathrm{BF}}$, Property (3) ensures that $\rm$ has maximum white vertex degree $d$. 
Lastly, $\cO$ is an $\alpha_d$-orientation on $\rm$, which follows from the definition of $\Phi_{\mathrm{BF}}$ and from Properties (4) and (5). 
\end{proof}

\subsection{Recovering the Bouttier-Di Francesco-Guitter bijection for Eulerian maps} In this section, we show that the bijection presented in the previous section is, in fact, equivalent -- up to minor adjustments of the trees -- to the celebrated BDG bijection, introduced by Bouttier, Di Francesco, and Guitter in~\cite{BouttierDiFrancescoGuitter_Mobiles}. This is illustrated on Figure~\ref{fig: Link between BF & BDG}. 

\subsubsection{BDG bijection}
Let us first recall the BDG bijection. We begin by introducing several definitions.
A \emph{labeled mobile} is a plane tree, in which vertices can be of 3 types: either black and round, white and round or square, with the requirement that all edges are either of type white-black or white-square. Additionally,  labels are assigned to square vertices and to each side of any white-black edge -- the latter are referred to as \emph{flags labels} -- subject to the following conditions:
\begin{enumerate}[(I)]
	\item each flag label is non-negative, and at least one is zero,
	\item each square vertex has a positive label,
	\item in clockwise order around a black vertex, two consecutive labels $\ell_1$ and $\ell_2$ verify  $\ell_2\leq \ell_1$ if they are flag labels on the same edge, and $\ell_2\geq \ell_1$ otherwise,
	\item in clockwise order around a white vertex, two consecutive labels $\ell_1$ and $\ell_2$ verify $\ell_2\geq \ell_1$ if they are flag labels on the same edge, $\ell_2= \ell_1-1$ if $\ell_1$ is a square vertex label, and $\ell_2=\ell_1$ otherwise.
\end{enumerate}

\begin{figure}
	\centering
	\includegraphics[width=0.4\linewidth ,page=2]{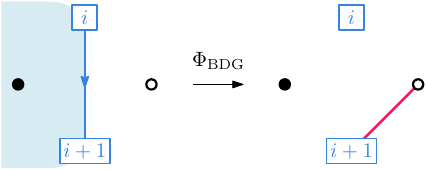}\qquad\qquad
	\includegraphics[width=0.4\linewidth ,page=1]{BDGBijection.pdf}
	\caption{\label{fig: BDG local bijection} The local transformation of an edge following Bouttier-Di Francesco-Guitter's bijection: the non-geodesic case (left), and the geodesic case (right).}
\end{figure}

For sake of simplicity, we assume that each corner incident to a square vertex inherits the label of that vertex.  The \emph{successor of a corner} labeled $\ell\geq 2$ is defined as the first corner labeled  $\ell - 1$ encountered when  moving clockwise around the tree. The \emph{successor of a flag} $\ell\geq 1$ is defined as the first corner labeled  $\ell$ encountered when  moving clockwise around the tree. 
Conversely, the \emph{predecessors} of a corner are the corners and flags for which it is the successor.

Recall the definition and properties of Eulerian maps from Section~\ref{sub:duality}. In particular, every Eulerian map endowed with one of its proper 2-colorings of its faces, possesses a canonical orientation of its edges by requiring that its white faces lie on the left of any directed edge. 
Given a pointed bicolored Eulerian planar map, we label its vertices by their geodesic labeling, see Section~\ref{sub:directed_geodesic_labeling}. 
Then, we define the transformation $\Phi_{\mathrm{BDG}}$ by applying to every edge, the local transformation illustrated in Figure \ref{fig: BDG local bijection}. 

One of the main results of~\cite{BouttierDiFrancescoGuitter_Mobiles} is the following: 
\begin{theo}[\cite{BouttierDiFrancescoGuitter_Mobiles}, Section 3.1]\label{theo: BDG bijection}
The application $\Phi_\mathrm{BDG}$ is a bijection between the set of pointed Eulerian planar maps and the set of labeled mobiles.
\end{theo}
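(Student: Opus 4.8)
The plan is to derive Theorem~\ref{theo: BDG bijection} from Corollary~\ref{cor: BF bij on alpha-orientations} by passing to the dual map and translating the $\alpha_d$-orientation into a vertex-labeling via Proposition~\ref{prop: Correspondence alpha-orientation and geodesic labeling}. Given a pointed Eulerian planar map $(\rm^\dagger,v_\star^\dagger)$, let $\rm$ be its dual, an unrooted bipartite plane map whose outer face is the one dual to $v_\star^\dagger$, endowed with the $2$-coloring of its vertices dual to the given $2$-coloring of the faces of $\rm^\dagger$. Fix any integer $d\geq\Delta_\circ(\rm)$ and equip $\rm$ with its unique minimal accessible $\alpha_d$-orientation $\cO$ (Property~\ref{claim: Unique minimal accessible alpha d orientation for a bipartite map}). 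By Corollary~\ref{cor: BF bij on alpha-orientations}, $\rt:=\Phi_{\mathrm{BF}}(\rm,\cO)$ ranges bijectively over the set of $d$-blossoming mobiles. It thus suffices to exhibit, for each such $d$, a bijection between $d$-blossoming mobiles and labeled mobiles that identifies $\rt$ (suitably redecorated) with $\Phi_{\mathrm{BDG}}(\rm^\dagger,v_\star^\dagger)$, and to check that this identification does not depend on the choice of $d$.

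The redecoration replaces the orientation of $\rt$ by labels. By Proposition~\ref{prop: Correspondence alpha-orientation and geodesic labeling}, the directed geodesic labeling $\ell$ of $(\rm^\dagger,v_\star^\dagger)$ is the unique labeling with $\ell(v_\star^\dagger)=0$ such that $\ell(v_1^\dagger)-\ell(v_2^\dagger)=\cO_\circ(e)-1$ for every directed dual edge $(v_1^\dagger,v_2^\dagger)$ with primal edge $e$; concretely, $\ell$ is recovered from $\cO$ by integrating the increments $\cO_\circ(e)-1$ along a spanning tree of $\rm^\dagger$, which is consistent by the cycle-cancellation argument of Lemma~\ref{lem:labeling_independent}. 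Both $\rt$ and $\Phi_{\mathrm{BDG}}(\rm^\dagger,v_\star^\dagger)$ have their round (resp.\ square) vertices canonically indexed by the faces (resp.\ the vertices) of $\rm^\dagger$, and under this identification their underlying plane trees coincide; one then assigns to each square vertex the label $\ell$ of the corresponding vertex of $\rm^\dagger$, reads off the two flag labels of each white--black edge from $\ell$, and deletes the opening stems of $\rt$ together with the square vertex dual to $v_\star^\dagger$. The crux is to verify, edge by edge, that this sends the local transformation of $\Phi_{\mathrm{BF}}$ (Figure~\ref{fig: BF local bijection}) to that of $\Phi_{\mathrm{BDG}}$ (Figure~\ref{fig: BDG local bijection}): a non-saturated edge of $\rm$ (for which $\cO_\circ(e)\geq1$) becomes a flagged white--black edge, matching the non-geodesic case, while a saturated edge of $\rm$ --- which by Property~\ref{claim: Saturated edges are from black to white endpoints} is oriented from black to white, so that $\cO_\circ(e)=0$ and $\ell$ increases by $1$ across the dual edge --- becomes a white--square edge, matching the geodesic case. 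Finally, one checks that conditions $(1)$--$(6)$ defining $d$-blossoming mobiles translate term by term into conditions $(\mathrm{I})$--$(\mathrm{IV})$ defining labeled mobiles: positivity of the excess and accessibility yield $(\mathrm{I})$ and $(\mathrm{II})$, the local out/in-degree prescriptions $(4)$ and $(5)$ yield the clockwise label-variation rules $(\mathrm{III})$ and $(\mathrm{IV})$, and condition $(6)$ (no saturated white--black edge) guarantees that the two flag labels of a common edge are extracted from $\ell$ compatibly.

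The main obstacle is precisely this edge-by-edge comparison of the two local rules through the duality dictionary: it requires keeping careful track of the clockwise/counterclockwise and left/right conventions, of the white-face-on-the-left orientation of $\rm^\dagger$, and of the sign in $\ell(v_1^\dagger)-\ell(v_2^\dagger)=\cO_\circ(e)-1$, and it crucially uses the hypothesis $d\geq\Delta_\circ(\rm)$ --- through Property~\ref{claim: Saturated edges are from black to white endpoints} --- so that the ``saturated'' case of $\Phi_{\mathrm{BF}}$ aligns with the ``geodesic'' case of $\Phi_{\mathrm{BDG}}$. One must also check that the orientation of a $d$-blossoming mobile is uniquely reconstructible from its labels, so that the redecoration is a genuine bijection rather than merely a surjection. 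A last, necessary point is the $d$-independence: by Property~\ref{claim:stability for orientations} (see also Remark~\ref{rem:proofclaim}), increasing $d$ only shifts the orientation of the half-edges incident to black vertices and leaves $\cO_\circ$ --- hence the labeling $\ell$ and the resulting labeled mobile --- unchanged, so the bijections obtained for the various admissible $d$ are mutually compatible and assemble into the single bijection $\Phi_{\mathrm{BDG}}$ of Theorem~\ref{theo: BDG bijection}.
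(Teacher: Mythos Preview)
The paper does not prove Theorem~\ref{theo: BDG bijection}: it is quoted as a result of Bouttier--Di~Francesco--Guitter and used as an input. What the appendix proves is the compatibility statement Proposition~\ref{prop:recover BDG}, namely $\Phi_{\mathrm{BF}}\circ\mathrm{Dual}=\Upsilon_d\circ\Phi_{\mathrm{BDG}}$, and the proof of that proposition \emph{uses} Theorem~\ref{theo: BDG bijection} (to argue that $\Phi_{\mathrm{BF}}\circ\mathrm{Dual}\circ\Phi_{\mathrm{BDG}}^{-1}$ is a bijection between finite equicardinal sets, whence $\Upsilon_d$ is bijective).

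Your plan is therefore genuinely different: you want to \emph{derive} Theorem~\ref{theo: BDG bijection} from Corollary~\ref{cor: BF bij on alpha-orientations} by constructing a redecoration map (essentially $\Upsilon_d^{-1}$) directly, without assuming BDG. This is a reasonable and interesting reversal, and the ingredients you invoke (Proposition~\ref{prop: Correspondence alpha-orientation and geodesic labeling} to turn the orientation into a labeling, Property~\ref{claim:stability for orientations} for $d$-independence, Property~\ref{claim: Saturated edges are from black to white endpoints} to match the saturated/geodesic cases) are the right ones.

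There is, however, a real gap. To conclude that $\Phi_{\mathrm{BDG}}$ is a bijection you need your redecoration to be a bijection between $d$-blossoming mobiles and $d$-labeled mobiles. You construct the map $d$-blossoming $\to$ $d$-labeled by pulling back to $(\rm,\cO)$, passing to the dual, and reading off $\ell$; you note that the orientation is recoverable from the labels (injectivity). But you do not establish that the image is exactly the set of $d$-labeled mobiles: you must check directly that the object you produce satisfies conditions (I)--(IV), and conversely that \emph{every} $d$-labeled mobile arises this way. The paper sidesteps this by appealing to the known bijectivity of $\Phi_{\mathrm{BDG}}$ and a cardinality argument; since you cannot do that, you need the direct verification. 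Lemma~\ref{lemm:labeled and blossoming mobile} gives one direction ($\Upsilon_d$ lands in $d$-blossoming mobiles), but the converse --- that your redecoration lands in $d$-labeled mobiles and is inverse to $\Upsilon_d$ --- is precisely what remains to be written out. Also, a small slip: there is no square vertex dual to $v_\star^\dagger$ to delete in $\rt$, since in $\Phi_{\mathrm{BF}}$ the outer face of $\rm$ contributes the excess rather than a square vertex.
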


\begin{rema}
The bijective mapping $\Phi_\mathrm{BDG}$ induces several parameter correspondences. Let $\rm$ be a pointed Eulerian planar map, and write $\rt=\Phi_\mathrm{BDG}(\rm)$. Then, we have the following correspondences between:
\begin{itemize}
	\item the degree of the pointed vertex of $\rm$ and the number of corners with label $1$ plus the number of flags with label $0$ in $\rt$,
	\item the degree of the white faces of $\rm$ and the white (round) vertex degrees in $\rt$,
	\item the directed geodesic labels of $\rm$ and the labels of the square vertices in $\rt$.
\end{itemize}
\end{rema}

\begin{rema}\label{rema: BDG bijection}
Note that if the white vertex degree of $\rt$ is at most $d\geq 1$, then $\rm$ has white face degree at most $d$. Hence, the directed geodesic labeling between two adjacent vertices of $\rm$ is at most $d-1$, since the contour of their adjacent white face forms a directed path connecting them.

It will be important for the following to observe that, this implies that the labels of both flags on an edge of $\rt$ can differ by at most $d-1$.
\end{rema}
This remark justifies the following definition. A \emph{$d$-labeled mobile} is a labeled mobile with maximum white vertex degree $d$. 

\subsubsection{$d$-labeled mobiles are $d$-blossoming mobiles}
In this section, we prove that for every bipartite plane map $\rm$ (with its minimal $\alpha_d$-orientation $\cO)$, the blossoming mobile $\rt:=\Phi_{\mathrm{BF}}(\rm,\cO)$ is close to the labeled mobile $\tilde \rt:=\Phi_\mathrm{BDG}(\rm^\dagger)$. Specifically, the  vertices and edges of $\rt$ and $\tilde \rt$ coincide, and the labels of $\tilde \rt$ are replaced by some orientation on $\rt$.

More precisely, for $d\geq 1$, we define an operation $\Upsilon_d$ on $d$-labeled mobiles as follows. Fix $\rt$ a $d$-labeled mobile. First, add to each corner of $\rt$ incident to a square vertex as many opening stems as it possesses predecessors. Then, endow the resulting tree with the orientation $\cO:\mathrm{H}(t)\rightarrow \Z_{\geq 0}$ defined by:
\begin{itemize}
	\item For every white-square edge $e=\left\{h_\circ, h_\sqempt\right\}$, set $\cO(h_\circ)\coloneqq 0$ and $\cO(h_\sqempt)\coloneqq d+1$. 
	\item For every white-black edge $e=\left\{h_\circ, h_\bullet\right\}$ with flags $\ell_1$ and $\ell_2$, in the clockwise order around the white vertex, set $\cO(h_\circ)\coloneqq \ell_2-\ell_1+1$ and $\cO(h_\bullet)\coloneqq d+1-\cO(h_\circ)$. Observe that, as  a directly  consequence of Condition (III) in the definition of labeled mobiles and of Remark~\ref{rema: BDG bijection}, one has $\cO(h_\circ)\geq 1$ and $\cO(h_\bullet)\geq 0$.
\end{itemize} 
Finally, forget about the flags and the vertex labeling. The resulting tree is denoted by $\Upsilon_d(\rt)$. Then, we have:

\begin{lemm}\label{lemm:labeled and blossoming mobile}
Fix $d\geq 1$. For any $d$-labeled mobile $\rt$, $\Upsilon_d(\rt)$ is a $d$-blossoming mobile.
\end{lemm}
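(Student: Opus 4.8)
The strategy is to exhibit $\Upsilon_d(\rt)$ as the image under Bernardi--Fusy's map $\Phi_{\mathrm{BF}}$ of a bipartite plane map carrying its minimal $\alpha_d$-orientation; the statement then follows from Corollary~\ref{cor: BF bij on alpha-orientations}. Concretely, given a $d$-labeled mobile $\rt$, I would set $\rm^{\dagger}:=\Phi_{\mathrm{BDG}}^{-1}(\rt)$, a pointed Eulerian plane map by Theorem~\ref{theo: BDG bijection}; by the parameter correspondences of $\Phi_{\mathrm{BDG}}$ (white face degrees versus white round vertex degrees, directed geodesic labels versus square vertex labels), all white faces of $\rm^{\dagger}$ have degree at most $d$. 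Let $\rm:=(\rm^{\dagger})^{\dagger}$ be the dual bipartite plane map, the marked vertex of $\rm^{\dagger}$ becoming the outer face of $\rm$; then $\Delta_\circ(\rm)\leq d$, and I equip $\rm$ with its minimal accessible $\alpha_d$-orientation $\cO$, which exists by Property~\ref{claim: Unique minimal accessible alpha d orientation for a bipartite map}. Since $\Phi_{\mathrm{BF}}(\rm,\cO)$ is a $d$-blossoming mobile by Corollary~\ref{cor: BF bij on alpha-orientations}, it suffices to prove $\Upsilon_d(\rt)=\Phi_{\mathrm{BF}}(\rm,\cO)$.

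To prove this identity I would compare the two trees piece by piece. Their underlying plane structures coincide under duality: the round vertices are the vertices of $\rm$, i.e.\ the faces of $\rm^{\dagger}$, which are the round vertices of $\rt$; the square vertices are the faces of $\rm$, i.e.\ the vertices of $\rm^{\dagger}$, which are the square vertices of $\rt$; edges correspond, as do the $2$-colorings of the round vertices. The essential tool is Proposition~\ref{prop: Correspondence alpha-orientation and geodesic labeling} (valid since $d\geq\Delta_\circ(\rm)$): for an edge $\tilde e$ of $\rm$ with dual edge $e^{\dagger}=(v_1^{\dagger},v_2^{\dagger})$ canonically oriented, $\cO_\circ(\tilde e)-1=\ell(v_1^{\dagger})-\ell(v_2^{\dagger})$, where $\ell$ is the directed geodesic labeling of $\rm^{\dagger}$, i.e.\ the labels of the square vertices of $\rt$. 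In particular, by Property~\ref{claim: Saturated edges are from black to white endpoints}, $\tilde e$ is saturated exactly when $e^{\dagger}$ is geodesic, so the ``saturated'' and ``geodesic'' local transformations of $\Phi_{\mathrm{BF}}$ and $\Phi_{\mathrm{BDG}}$ act on matching edges; along the resulting edges the orientation transported by $\Phi_{\mathrm{BF}}$ equals the one prescribed by $\Upsilon_d$, since $\cO_\circ(\tilde e)=\ell_2-\ell_1+1$ with $\ell_1,\ell_2$ the adjacent square vertex labels read clockwise around the white endpoint as in the definition of $\Upsilon_d$ (non-saturated case), while in the saturated case both constructions produce a white--square edge with orientation $0$ on the white side and $d+1$ on the square side by $(d+1)$-fractionality. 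The remaining point — which I expect to be the main obstacle — is to identify the opening stems placed on the square vertices: $\Phi_{\mathrm{BF}}$ attaches to the square vertex inside a face $f$ of $\rm$ one stem per non-saturated edge of $\partial f$ and one per saturated edge of $\partial f$ having $f$ on its left, whereas $\Upsilon_d$ attaches one stem per predecessor to each corner of that square vertex; reconciling these requires a careful accounting of the successor/predecessor structure of the BDG bijection against the stem-creation rule of Figure~\ref{fig: BF local bijection}, using the geodesic description of saturation above, and it is also the only input needed to establish condition (1) (positive excess).

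Once the identity $\Upsilon_d(\rt)=\Phi_{\mathrm{BF}}(\rm,\cO)$ is in hand, all of conditions (1)--(6) follow from Corollary~\ref{cor: BF bij on alpha-orientations} and the parameter correspondences of $\Phi_{\mathrm{BF}}$; in particular condition (5) holds because the indegree of a black round vertex of the mobile equals the indegree of the corresponding black vertex of $\rm$, which equals its degree as $\cO$ is an $\alpha_d$-orientation. Conditions (2), (3), (4), (6) can moreover be checked directly on $\Upsilon_d(\rt)$: (2) and (3) are inherited verbatim from $\rt$; the $(d+1)$-fractionality of $\cO$ and the characterization of the zero-orientation half-edges are read off the defining formulas of $\Upsilon_d$, using $\cO(h_\circ)\geq 1$ on white--black edges (Condition (III) in the definition of labeled mobiles) and $\cO(h_\bullet)\geq 1$ (Remark~\ref{rema: BDG bijection}), the latter also yielding condition (6); and condition (4), $\ind(v)=d\deg(v)$ for white round vertices, is equivalent to $\out(v)=\deg(v)$, which is the telescoping identity obtained by summing the clockwise monotonicity condition around $v$, each white--black edge contributing $\ell_2-\ell_1$ and each white--square edge contributing $-1$, for a total of $0$.
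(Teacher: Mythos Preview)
Your strategy inverts the paper's logical order: you aim to establish the commutation identity $\Upsilon_d(\rt)=\Phi_{\mathrm{BF}}\big(\mathrm{Dual}(\Phi_{\mathrm{BDG}}^{-1}(\rt))\big)$ first (this is exactly Proposition~\ref{prop:recover BDG}) and then read off the lemma from Corollary~\ref{cor: BF bij on alpha-orientations}. The paper does the opposite: it proves the lemma by a direct verification of conditions (1)--(6), and only afterwards deduces Proposition~\ref{prop:recover BDG} from the lemma via an injectivity-plus-cardinality argument on finite sets. Your route is legitimate in principle, but the step you yourself flag as ``the main obstacle'' --- matching, corner by corner, the opening stems that $\Phi_{\mathrm{BF}}$ places on square vertices with those that $\Upsilon_d$ places via the predecessor structure --- is precisely the substantive work, and you do not carry it out. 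Without it the identity is not established, and conditions (1) and (5) remain unproven in your argument. (A minor imprecision: in the non-saturated case the $\ell_1,\ell_2$ appearing in the definition of $\Upsilon_d$ are \emph{flag} labels on the two sides of a white--black edge, not ``adjacent square-vertex labels''; they do coincide with the geodesic labels of the endpoints of the corresponding Eulerian edge, so your appeal to Proposition~\ref{prop: Correspondence alpha-orientation and geodesic labeling} is still sound.)

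The paper's direct verification is shorter than you might expect, and your last paragraph already contains most of it: your telescoping argument for (4) is exactly the paper's, and (2), (3), (6) are immediate as you say. For (1) the paper simply observes that the excess of $\Upsilon_d(\rt)$ equals the number of flags and square corners of $\rt$ that have \emph{no} successor, i.e.\ the number of flags labeled $0$ plus the number of square corners labeled $1$; this is positive by axiom (I). For (5) the paper only writes that ``comparable considerations for black vertices'' using axiom (III) suffice, by an argument symmetric to the one for (4). Thus a complete proof along the paper's lines requires no detour through the bijections at all.
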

This leads us to the following result, illustrated in Figure~\ref{fig: Link between BF & BDG}:
\begin{prop}\label{prop:recover BDG}
Fix $d\geq 1$. On the set of pointed Eulerian planar maps with maximum white vertex degree $d$, we have: 
\begin{equation}\label{eq: commutation de BF/BDG}		
	\Phi_\mathrm{BF}\circ{\mathrm{Dual}}=\Upsilon_d\circ\Phi_\mathrm{BDG}.
\end{equation}
\end{prop}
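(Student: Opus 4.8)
The plan is to verify the identity~\eqref{eq: commutation de BF/BDG} directly, by following a pointed Eulerian planar map $E$ (satisfying the degree hypothesis of the statement) through both sides and checking that the two resulting $d$-blossoming mobiles coincide, as suggested by Figure~\ref{fig: Link between BF & BDG}. Write $\rm:=E^\dagger$ for the dual bipartite plane map, whose outer face is the one dual to the pointed vertex $v_\star$ of $E$, and equip $\rm$ with its minimal accessible $\alpha_d$-orientation $\cO$ (Property~\ref{claim: Unique minimal accessible alpha d orientation for a bipartite map}); then $\mathrm{Dual}(E)=(\rm,\cO)$ and, by Corollary~\ref{cor: BF bij on alpha-orientations}, $\Phi_\mathrm{BF}(\rm,\cO)$ is a $d$-blossoming mobile. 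On the other side, the parameter correspondences of $\Phi_\mathrm{BDG}$ identify the white round vertex degrees of $\Phi_\mathrm{BDG}(E)$ with the white face degrees of $E$, so $\Phi_\mathrm{BDG}(E)$ is a $d$-labeled mobile and $\Upsilon_d(\Phi_\mathrm{BDG}(E))$ is a $d$-blossoming mobile by Lemma~\ref{lemm:labeled and blossoming mobile}. Both sides are thus well-defined; the engine of the comparison is Proposition~\ref{prop: Correspondence alpha-orientation and geodesic labeling}, which relates $\cO$ to the directed geodesic labeling $\ell$ of $E$ via~\eqref{eq: Correspondence alpha-orientation and geodesic labeling}.

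First I would check that the two constructions produce the same decorated plane tree (forgetting orientations and stems). Both $\Phi_\mathrm{BF}$ and $\Phi_\mathrm{BDG}$ are built by inserting one round vertex per face of $E$ — black or white according to the colour of the face — and by applying a local transformation at each edge of $E$ (equivalently, at each edge of $\rm$). The key point is that the edges turned into white--square edges agree on the two sides: for $\Phi_\mathrm{BF}$ these are exactly the saturated edges of $(\rm,\cO)$ (Property~\ref{claim: Saturated edges are from black to white endpoints} forces such an edge to be saturated from its black to its white endpoint, and the local rule of Figure~\ref{fig: BF local bijection} then yields a white--square edge), while for $\Phi_\mathrm{BDG}$ these are the geodesic edges of $E$; and by~\eqref{eq: Correspondence alpha-orientation and geodesic labeling} an edge $e\in\mathrm{E}(\rm)$ is saturated, i.e. $\cO_\circ(e)=0$, precisely when its dual edge $e^\dagger=(v_1^\dagger,v_2^\dagger)$ satisfies $\ell(v_2^\dagger)=\ell(v_1^\dagger)+1$, i.e. is geodesic. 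Comparing the local pictures of Figures~\ref{fig: BF local bijection} and~\ref{fig: BDG local bijection} (same placement of the white--square edge, on the side of the white face of $E$) shows that the square and round vertices and the edges between them match, once the pointed vertex $v_\star$ of $E$ and the outer face of $\rm$ are handled consistently. The opening stems are then supplied identically: $\Upsilon_d$ attaches to each square-vertex corner as many opening stems as it has predecessors, and one checks — using that the successor structure of a $d$-labeled mobile is governed by $\ell$ — that this equals the number of stems created at that corner by the local rule of $\Phi_\mathrm{BF}$.

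It remains to compare the orientations. On a white--square edge $e=\{h_\circ,h_\sqempt\}$, $\Phi_\mathrm{BF}$ makes the created edge saturated from the square vertex, so $\cO(h_\sqempt)=d+1$ and $\cO(h_\circ)=0$, which is exactly the rule prescribed by $\Upsilon_d$. On a white--black edge $e=\{h_\circ,h_\bullet\}$ coming from a non-saturated edge of $\rm$, $\Phi_\mathrm{BF}$ retains the orientation $\cO$, so $\cO(h_\circ)=\cO_\circ(e)$; on the other hand $\Upsilon_d$ sets $\cO(h_\circ)=\ell_2-\ell_1+1$, where $\ell_1,\ell_2$ are the flag labels read clockwise around the white vertex. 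Identifying the white vertex of the mobile with the corresponding white face $f$ of $E$ (whose contour is directed so that $f$ lies on the left), one checks that reading clockwise around that mobile vertex amounts to following the boundary of $f$ against its direction, so that $(\ell_1,\ell_2)=(\ell(v_2^\dagger),\ell(v_1^\dagger))$ for the dual edge $e^\dagger=(v_1^\dagger,v_2^\dagger)$ of $E$; hence $\ell_2-\ell_1+1=\ell(v_1^\dagger)-\ell(v_2^\dagger)+1=\cO_\circ(e)$ by~\eqref{eq: Correspondence alpha-orientation and geodesic labeling}, and the two values agree. In both constructions the remaining half-edge at the black vertex is then forced by $(d+1)$-fractionality, which completes the proof of~\eqref{eq: commutation de BF/BDG}.

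The main obstacle will not be any single deep step but the bookkeeping of conventions: (i) pinning down the duality and planarity conventions so that ``clockwise around the white mobile vertex'' really reads the flags in the order $(\ell(\text{head}),\ell(\text{tail}))$ of the corresponding edge of $E$; and (ii) checking that the opening stems $\Upsilon_d$ creates at a square corner (one per predecessor, as dictated by the successor rule, which is itself controlled by the labelling $\ell$) coincide with those $\Phi_\mathrm{BF}$ creates there — including the somewhat subtle matching of the pointed vertex $v_\star$ of $E$, which is \emph{not} a square vertex of $\Phi_\mathrm{BDG}(E)$, with the square vertex that $\Phi_\mathrm{BF}$ places in the outer face of $\rm$, and the corresponding reconciliation of the flags of label $0$ with the excess of the blossoming mobile. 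Both (i) and (ii) are finite, local checks that are forced by Proposition~\ref{prop: Correspondence alpha-orientation and geodesic labeling}, but making them airtight requires walking carefully around each vertex and each face.
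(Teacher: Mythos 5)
Your strategy is genuinely different from the paper's: you compare the two constructions in the forward direction, edge by edge, using Proposition~\ref{prop: Correspondence alpha-orientation and geodesic labeling} as a dictionary between the minimal $\alpha_d$-orientation of $\rm=E^\dagger$ and the directed geodesic labeling of $E$. The parts of your argument concerning edges and orientations are sound and well targeted: the identification of saturated edges of $(\rm,\cO)$ with geodesic edges of $E$, and the computation $\cO(h_\circ)=\ell_2-\ell_1+1=\cO_\circ(e)$ on white--black edges, are exactly what~\eqref{eq: Correspondence alpha-orientation and geodesic labeling} delivers (modulo the orientation conventions you rightly flag in (i)).

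The gap is in your step (ii), the comparison of opening stems. You assert that the number of predecessors of a square corner of $\Phi_\mathrm{BDG}(E)$ equals the number of stems that $\Phi_\mathrm{BF}$ deposits in that corner, and you describe this as a ``finite, local check''. It is not local: the predecessors of a corner are defined through the successor function, i.e.\ through the first corner with the appropriate label met along the \emph{global} clockwise contour of the tree, whereas the stems of $\Phi_\mathrm{BF}$ are produced by a rule local to each edge of $\rm$. Equating the two amounts to showing that the matching of stems with corners performed by the closure ${\Phi_\mathrm{BF}}^{-1}$ coincides with the successor function of the labeled mobile --- which is essentially the entire content of the proposition rather than a consequence of Proposition~\ref{prop: Correspondence alpha-orientation and geodesic labeling}, and you offer no argument for it. The same applies to the reconciliation of the pointed vertex $v_\star$ with the square vertex of the outer face, which you mention but do not resolve.

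The paper sidesteps precisely this difficulty. It first proves that $\Upsilon_d$ is injective, the key point being that the successor function of a labeled mobile $\rt$ can be reconstructed from the opening stems of $\Upsilon_d(\rt)$ via the contour matching, hence so can the labels. Since for each $n$ the composite $\Phi_\mathrm{BF}\circ\mathrm{Dual}\circ{\Phi_\mathrm{BDG}}^{-1}$ is a bijection between the finite sets of $d$-labeled and $d$-blossoming mobiles with $n$ vertices, injectivity of $\Upsilon_d$ upgrades to bijectivity for free; the identity~\eqref{eq: commutation de BF/BDG} then follows because both closures ${\Phi_\mathrm{BF}}^{-1}$ and ${\Phi_\mathrm{BDG}}^{-1}$ depend only on the successor function, and these functions coincide for $\rt$ and $\Upsilon_d(\rt)$. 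If you wish to keep your forward approach, you must supply the missing stem/predecessor comparison, e.g.\ by proving that the stem which $\Phi_\mathrm{BF}$ attaches for an edge $e$ of $\partial f$ is matched, in the contour word of the mobile, with exactly the corner or flag that is the BDG-predecessor associated with $e$; this is doable but is the real work, not a routine verification.
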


\begin{figure}
  	\centering
  	\includegraphics[width=1\linewidth ,page=7]{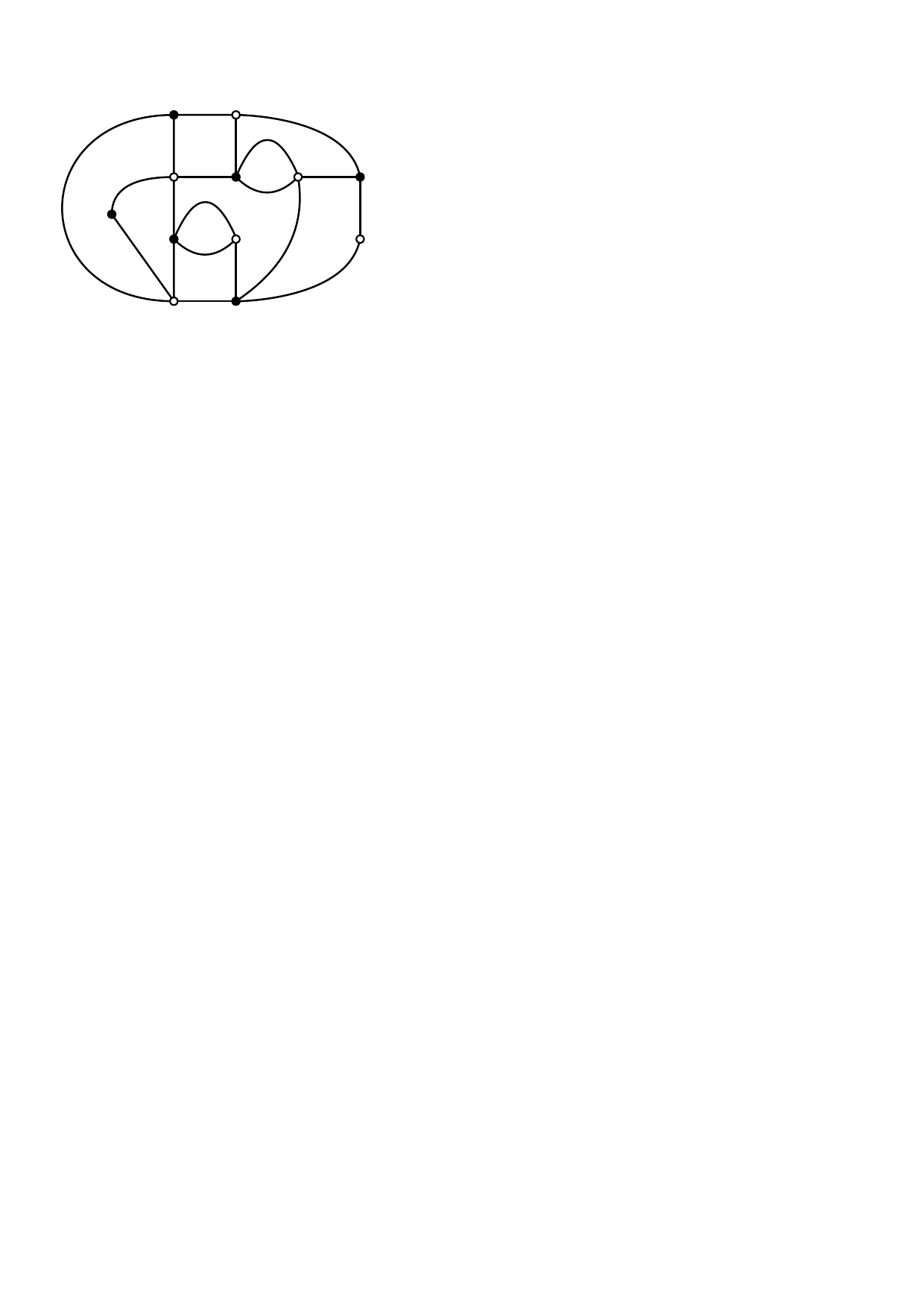}
 	\caption{\label{fig: Link between BF & BDG}A bipartite plane map and its successive transformations illustrating that Bernardi-Fusy's bijective scheme, when applied to $\alpha_d$-orientations, coincides with Bouttier-Di Francesco-Guitter's bijection.}
\end{figure}

We begin by proving the proposition, relying on Lemma~\ref{lemm:labeled and blossoming mobile}. 
\begin{proof}
We first show that $\Upsilon_d$ is an injective mapping from the set of $d$-labeled mobiles into the set of $d$-blossoming mobiles. Let $\rt$ be a $d$-labeled mobile, and denote $\tilde \rt = \Upsilon_d(\rt)$. Since both mobiles share the same underlying undecorated tree, reconstructing $\rt$ from $\tilde{\rt}$ reduces to recovering the value of its labels.

The key idea is that it suffices to reconstruct the successor function on $\rt$ from $\tilde \rt$.
 Indeed, corners with no successor have label $1$, and since the successor of a corner labeled $l$ is a corner labeled $l-1$, we can recover the other corner labels iteratively. Similarly, flags with no successor have label $0$, and otherwise they have the same label as their successors, which are corners by definition. 

Now, from the opening stems of $\tilde{\rt}$, we can reconstruct the predecessor function: the counterclockwise cyclic contour of $\tilde{\rt}$ induces a matching between the opening stems and the set formed by the sides of the round–round edges together with the square corners that precede a round–square edge in the clockwise direction. Then, we can recover the successor function from the predecessor function. Therefore, we conclude that $\Upsilon_d$ is injective. 

Secondly, for any $n\geq 1$, the map $\Phi_\mathrm{BF}\circ\mathrm{Dual}\circ{\Phi_\mathrm{BDG}}^{-1}$ induces a bijection between the (finite) set of $d$-labeled mobiles with $n$ vertices and the (finite) set of $d$-blossoming mobiles with $n$ vertices. Hence, $\Upsilon_d$ defines a bijection between these two sets. 

To conclude, it remains to check that the restriction of $\Upsilon_d$ to the (finite) set of $d$-labeled mobiles with $n$ vertices coincides with $\Phi_\mathrm{BF}\circ\mathrm{Dual}\circ{\Phi_\mathrm{BDG}}^{-1}$. This follows from the fact that the closure operations ${\Phi_\mathrm{BF}}^{-1}$ and ${\Phi_\mathrm{BDG}}^{-1}$ both depend only on the successor function (see \cite[Section 3]{BernardiFusy_Girth} and \cite[Section 3.2]{BouttierDiFrancescoGuitter_Mobiles}, respectively), and that we proved that these functions coincide for any labeled mobile $\rt$ and its associated blossoming mobile $\Upsilon(\rt)$.
\end{proof}

\begin{proof}[Proof of Lemma~\ref{lemm:labeled and blossoming mobile}]
In this proof, Roman numerals refer to the conditions in the definition of labeled mobiles, while the Arabic numerals to those in the definition of $d$-blossoming mobiles given at the beginning of Section~\ref{sub:BF_application_to_alpha_d_}. Let $\rt$ be a $d$-labeled mobile, and let $\tilde{\rt}\coloneqq\Upsilon_d(\rt)$. We prove that $\tilde \rt$ verifies the conditions (1) to (6).

First, recall that the excess of a blossoming mobile is defined as the difference between the number of half-edges incident to round vertices and the number of opening stems. Next, the number of half-edges incident to round vertices in $\tilde \rt$ is the same as in $\rt$, and the latter is exactly equal to the number of its flags plus the number of its corners incident to square vertices. Therefore, the excess of $\tilde \rt$ is equal to the number of flags with label 0 (which are exactly the flags with no successor in the mobile by (I) and (II)) plus the number of corner labeled $1$ (which are exactly the labeled corner with no successor in the mobile by (II)) in $\rt$. Since there exists at least one flag labeled $0$ by  (I), Condition (1) holds. 

Conditions (2) and (3) are direct consequences of the definition of $\Upsilon_d$, which preserves the shape of $\rt$ and the type of its vertices. 

Next, let $v$ be a white vertex of $\rt$. Let $m$ and $n$ be the number of white-square and white-black edges incident to $v$, respectively. List the white-black edges as $e_1, \ldots, e_n$, in clockwise order around $v$, starting arbitrarily from any one of them. For any $1\leq i \leq n$, denote by $\ell^\text{left}_i, \ell^\text{right}_i$ the labels of the flags situated on the left and on the right of $e_i$. 

Since both mobiles $\rt$ and $\tilde \rt$ share the same underlying undecorated tree, we will also refer to $v$ and $e_1, \ldots, e_n$ for their corresponding vertex and edges in $\tilde \rt$. 
With these notations, the orientation $\cO$ of $\tilde\rt$ satisfies the following:
\begin{equation*}
    \cO(h_i) = \ell^\text{right}_i - \ell^\text{left}_i + 1,
\end{equation*}
for every $1\leq i\leq n$, and where $h_i$ is the half-edge of $e_i$ incident to $v$. Moreover, note that $\ell^\text{right}_i$ and $\ell^\text{left}_{i+1}$ are consecutive labels of flags around $v$ in the clockwise order in $\rt$. Then, it follows from (IV) that $\ell^\text{right}_i-\ell^\text{left}_{i+1}$ is the number of white-square edges between $e_i$ and $e_{i+1}$ in $\rt$. Thus, it is also the case for $\tilde{\rt}$. Therefore, by rewriting the following sum, we obtain:
\begin{equation*}
\out(v) = \sum\limits_{1\leq i\leq n}{(\ell^\text{right}_i-\ell^\text{left}_i+1)} = n + \sum\limits_{1\leq i\leq n}{(\ell^\text{right}_i-\ell^\text{left}_{i+1})} =  n+m = \deg(v)
\end{equation*}
where $\ell^\text{left}_{n+1}\coloneqq \ell^\text{left}_{1}$. Finally, as the orientation $\cO$ is a $(d+1)$-fractional orientation, one has ${\ind(v)=d\cdot\deg(v)}$. Hence, (4) follows.

Due to comparable considerations for black vertices, (III) implies (5).
Finally, as shown at the beginning of this proof, $\cO(h_\circ)>0$ for any white-black edge $e=\left\{h_\circ, h_\bullet\right\}$. In particular, this implies (6).
\end{proof}

\section*{Table of notations}\label{sec:notations}
\subsection*{Planar maps\nopunct}\leavevmode\par
\begin{tabular}{p{2.5cm}p{11.5cm}}
$\Delta(\rm)$ & maximal vertex degree of a bipartite map $\rm$,\\
$\Delta_\bullet(\rm)$, $\Delta_\circ(\rm)$ & maximal vertex degree of black vertices and white vertices of a bipartite map $\rm$, respectively,\\
$\rV_\bullet(\rm)$, $\rV_\circ(\rm)$ & set of the (round) black vertices and (round) white vertices of a bipartite map $\rm$, respectively,\\
$\mathcal{M}$ &  set of rooted bipartite planar maps, \\
$\bar{\mathcal{M}}$ &  set of rooted bipartite plane maps, \\
${\mathcal{M}}^{(d)}$ &  set of rooted bipartite planar maps with maximal vertex degree $d$,\\
$\bar{\mathcal{M}}^{(d)}$ &  set of rooted bipartite plane maps with maximal vertex degree $d$,\\
$\mathcal{M}^\sq_\circ$ &  set of bipartite planar maps whose vertices of degree two are either round or square and the other vertices are all round, and rooted at a white vertex,\\
$\mathcal{I}_\circ$ &  set of rooted planar maps endowed with a spin configuration, rooted at a white vertex.
\end{tabular}

\subsection*{Trees\nopunct}\leavevmode\par
\begin{tabular}{p{2.5cm}p{11.5cm}}$c(\rt)$ &  charge of a blossoming tree $\rt$,\\
$c_\rt(v)$ or $c(v)$ &  charge of a vertex $v$ of a blossoming tree $\rt$,\\
$\mathrm{exc}(\rt)$ &  excess of a planted $\alpha_d$-tree $\rt$,\\
$\mathcal{T}_k$ &  set of well-charged trees of total charge $k$,\\
${\mathcal{T}}^{(d)}_k$ &  set of well-charged trees of total charge $k$ and maximal vertex degree $d$.
\end{tabular}

\subsection*{Weights\nopunct}\leavevmode\par
\begin{tabular}{p{2.5cm}p{11.5cm}}
$w^\mathrm{tree}(\rt)$ &  weight of well-charged planted trees $\rt$,\\
$w(\rm)$ &  weight of a map $\rm\in\mathcal{M}$,\\
$\bar{w}(\rm)$ &  weight of a map $\rm\in\bar{\mathcal{M}}$,\\
$w^\sq(\rm)$ &  weight of a map $\rm\in\mathcal{M}^\sq_\circ$,\\
$w^\mathrm{Ising}(\rm,\sigma)$ &  weight of a map $\rm$ endowed with a spin configuration $\sigma$,
\end{tabular}

\subsection*{Generating functions\nopunct}\leavevmode\par
\begin{tabular}{p{2.5cm}p{11.5cm}}
$W(u,\xi)$ & weighed generating series of white-rooted well-charged planted trees,\\
$B(u,\xi)$ & weighed generating series of black-rooted well-charged planted trees,\\
$B_1(\ux,\uy, u)$ & weighed generating series of black-rooted well-charged planted trees with charge 1,\\
${M}_\circ(u)$ & weighed generating series of maps $\rm\in{\mathcal{M}}_\circ$,\\
$\bar{M}_\circ(u)$ & weighed generating series of maps $\rm\in\bar{\mathcal{M}}_\circ$,\\
${M}^\sq_\circ(u)$ & weighed generating series of maps $\rm\in{\mathcal{M}}^\sq_\circ$,\\
${I}_\circ(\ux,\uy, t,\nu,u)$ & weighed generating series of maps $\rm\in{\mathcal{I}}_\circ$,\\
$\Theta$ & function on $\Q\llbracket\ux,\uy, t, \nu, u\rrbracket$, interpreted as a change of variables.
\end{tabular}


\bibliographystyle{plainnat} 
\bibliography{main}  


\bigskip

{ {\sc{Marie Albenque:}} \href{mailto:malbenque@irif.fr}{malbenque@irif.fr}}\\
{IRIF, CNRS, Université Paris Cité, F-75013 Paris, France.}

\medskip

{ {\sc{Laurent Ménard}}: \href{mailto:laurent.menard@normalesup.org}{laurent.menard@normalesup.org}}\\
{Modal'X, UPL, Université Paris-Nanterre, F-92000 Nanterre, France.}

\medskip

{ {\sc{Nicolas Tokka:}} \href{mailto:n.tokka@parisnanterre.fr}{n.tokka@parisnanterre.fr}}\\
{Modal'X, UPL, Université Paris-Nanterre, F-92000 Nanterre, France.}

\end{document}